\newtheorem{theorem}{\sc Theorem}[section]
\newtheorem{proposition}[theorem]{\sc Proposition}
\newtheorem{corollary}[theorem]{\sc Corollary}
\theoremstyle{definition}
\newtheorem{definition}[theorem]{\sc Definition}
\newtheorem{example}[theorem]{\sc Example}
\theoremstyle{remark}
\newtheorem{remark}[theorem]{\sc Remark}
\newtheoremstyle{mystyle}{}{}{}{}{}{.}{ }
  {\thmname{#1}\thmnote{\scshape #3}}
\theoremstyle{mystyle}
\def\stac#1{\raise-.2cm\hbox{$\stackrel{\displaystyle\otimes}{\scriptscriptstyle{#1}}$}}
\def\Bstac{\raise-.2cm\hbox{$\stackrel{\displaystyle\otimes}{\scriptscriptstyle{{\bf
      R}^\chi}}$}}
\def\cten#1{\raise-.2cm\hbox{$\stackrel{\displaystyle\widehat{\otimes}}
{\scriptscriptstyle{#1}}$}}
\def\lb{{braid preserving }}
\def\bpm{{braided pair of monads }}
\def\BP{\otimes_{{\bf R}^\chi}}
\numberwithin{equation}{section}
\begin{document}
\title{Examples of para-cocyclic objects induced by \emph{BD}-laws}
\author{Gabriella B\"ohm}
\author{Drago\c s \c Stefan}
\address{Research Institute for Particle and Nuclear Physics, Budapest,
H-1525 Budapest 114, P.O.B.49, Hungary.}
\address{ University of Bucharest, Faculty of Mathematics and Computer Science,
14 Academiei Street, Bucharest,  Ro-010014, Romania.}
\date{}
\maketitle\vspace*{-5mm}
\begin{center} Dedicated to Freddy Van Oystaeyen on the
occasion of his 60th birthday.
\end{center}
\begin{abstract}
In a recent paper \cite{BohmStefan}, we gave a general construction of a
para-cocyclic structure on a cosimplex, associated to a so called admissible
septuple
-- consisting of two categories, three functors and two natural
transformations, subject to compatibility relations. The main examples of such
admissible septuples were induced by algebra homomorphisms. In this note we
provide more general examples coming from appropriate (`locally
braided') morphisms of monads.
\end{abstract}

\subjclass{2000 \textit{Mathematics Subject Classification}: 16E40
(primary), 16W30 (secondary)}

\section*{Introduction}

History of cyclic homology started in the early eighties of the last
century. The seminal works of the pioneers A Connes, B Tsygan, D Quillen and
J-L Loday were motivated by looking for non-commutative generalizations of de
Rham cohomology on one hand, and Lie algebra homology of matrices on the other
hand.

In the subsequent decades cyclic homology has been extensively studied and
became an important tool in diverse areas of mathematics, such as homological
algebra, algebraic topology, Lie algebras, algebraic K-theory and so
non-commutative differential geometry. Thus by most various motivations, lots
of examples have been constructed. In order to study general features of the
examples, and also to be able to construct new ones, it was desirable to find
a unifying general description. A fundamental first step in this direction was
made by A Kaygun in \cite{Kay:UniHCyc}, who gave a construction of
para-(co)cyclic objects in symmetric monoidal categories in terms of
(co)monoids. In particular, in this way he managed to describe in a universal
form all examples arising from Hopf cyclic theory (upto cyclic duality,
cf. \cite{KR}). Motivated by a  
generalization to bialgebroids (over non-commutative rings, in which case the
underlying bimodule categories are not symmetric), in \cite{BohmStefan} we
made a further step of generalization and constructed para-(co)cyclic objects
in arbitrary categories, in terms of (co)monads. Kaygun's construction can be
recovered as a particular case when the (co)monads in question are induced by
(co)monoids.

Mean examples of Kaygun's construction are induced by algebras over a
commutative ring. By analogy, in this paper we show that appropriate monad
morphisms (which are `locally braided' in a sense to be described) induce
examples of para-cocyclic objects in \cite{BohmStefan}.

The paper is organized as follows. In Section \ref{sec:bimod} we
recall some facts about monads and {\em BD}-laws that are used in
the paper. In Section \ref{sec:construction} we introduce the
notion of a {\em locally braid preserving morphism} of monads,
generalizing a homomorphism of algebras, and we investigate their
basic properties that are needed to state and prove our main
result. In Section \ref{sec:main} we show that any such morphism
determines an `admissible septuple' in the sense of
\cite{BohmStefan}, hence can be used to construct para-cocyclic
objects. Here we also illustrate how this construction works in
the example of a morphism of algebras in a braided monoidal
category (i.e. when there is a {\em global} braiding). Particular examples
will be provided by appropriate homomorphisms of (co)module algebras of a
(co)quasitriangular Hopf algebra.

{\bf Acknowledgment.} It is our pleasure to thank the organizers
of the conference  {\em `Non-com\-mu\-ta\-tive Rings and
Geometry'}, Almer\'{i}a, Spain,
  18-22 September 2007, held in the honour of the 60th birthday of Freddy Van
  Oystaeyen. The first author was partially supported by the Hungarian
  Scientific Research Fund OTKA K 68195 and the Bolyai J\'anos Research
  Scholarship. The second author was supported by Contract 2-CEx06-11-20 of
the Romanian Ministry of Education and Research.

\section{Monads and the category of their (bi)modules}
\label{sec:bimod}

Throughout the paper, we use the notations introduced in
\cite{BohmStefan}. That is, in the $2$-category \textrm{CAT }horizontal
composition (of functors) is denoted by juxtaposition, while $\circ $ is
used for vertical composition (of natural transformations).
For example, for two functors $F:{\mathcal C}\to {\mathcal C}'$, $G:{\mathcal
C}'\to {\mathcal C}''$ and an object $X$ in ${\mathcal C}$, instead of
$G(F(X))$ we write $GFX$.
For two natural transformations $\mu:F\rightarrow F^{\prime }$ and $
\nu:G\rightarrow G^{\prime }$ we write $G^{\prime }\mu X\circ \nu
FX:GFX\rightarrow G^{\prime }F^{\prime }X$ instead of $G^{\prime }(\mu_
X)\circ \nu_{F(X)}.$ In equalities of natural transformations
we shall omit the object $X$ in our formulae.

We shall also use a graphical representation of morphisms in a category.
For functors $F_1,\dots,F_n$, $G_1,\dots,G_m$, which can be composed to
$F_1F_2\dots F_n:{\mathcal D}_1\to {\mathcal C}$ and $G_1G_2\dots
G_m:{\mathcal D}_2\to {\mathcal C}$, and objects $X$ in ${\mathcal D}_1$ and
$Y$ in ${\mathcal D}_2$,
a morphism $
f:F_1F_2\dots F_n X\to G_1G_2\dots G_mY$ will be represented vertically,
with the domain up, as in Figure~\ref{Fig:morphisms in C}(a).
Furthermore, for a functor $T:\mathcal{C}\to\mathcal{C}'$, the morphism $Tf$
will be drawn as in (b). Keeping the notation
from the first paragraph of this section, the picture representing $\mu GX$
is shown in diagram (c). The composition $g\circ f$ of the morphisms $f:X\to
Y$ and $g:Y\to Z$ will be represented as in diagram (d).
For the multiplication $t$ and the unit $\tau$ of a monad $T$ on $\mathcal{C}$
(see Definition~\ref {de:monad}), and an object $X$ in $\mathcal{C}$, to draw
$tX$ and $\tau X$ we shall use the diagrams (e) and (f), while for
a distributive law $\mathfrak{l}:RT\to TR$ (see
Definition~\ref{de:distributive law}) $\mathfrak{l} X$
will be drawn as in the picture (g). If $\mathfrak{l}$ is invertible, the
representation of $\mathfrak{l}^{-1}X$ is shown in diagram (h).
\begin{figure}[h]
\begin{center}
{\includegraphics[scale=.78]{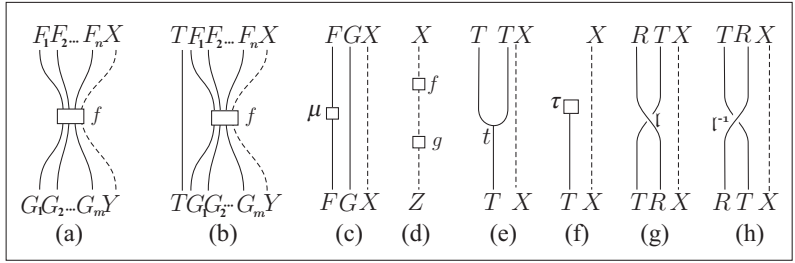}}
\end{center}
\caption{Diagrammatic representation of morphisms in a category}
\label{Fig:morphisms in C}
\end{figure}

For simplifying the diagrams containing only natural transformations, we
shall always omit the last string, that corresponds to an object in the
category.

\begin{definition}
\label{de:monad} A {\em monad} on a category $\mathcal{C}$ is a triple
$(R,r,\rho ),$
where $ R:\mathcal{C}\rightarrow \mathcal{C}$ is a functor,
$r:R^{2}\rightarrow R$ and $\rho :\mathrm{Id}_{\mathcal{C}}\rightarrow R$ are
natural transformations such that the first two diagrams in
Figure~\ref{fig:monad}, expressing associativity and unitality, are
commutative.
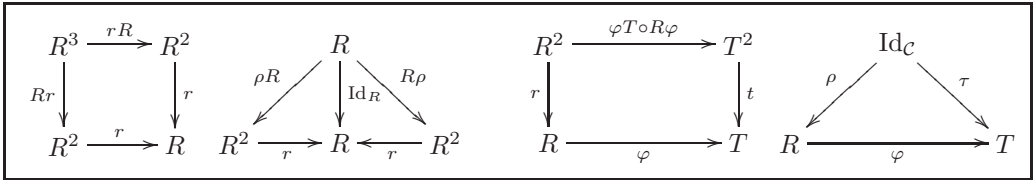
\begin{figure}[h]
\begin{center}
\fbox{
\xymatrix{
  R^3 \ar[d]_{Rr} \ar[r]^{rR} & R^2 \ar[d]^{r} \\
  R^2\ar[r]^{r} &  R }\
\xymatrix{ &  R \ar[d]^{\mathrm{Id}_R} \ar[dl]_{\rho R} \ar[dr]^{R\rho }  & \\
 R^2\ar[r]_{r}&  R   & R^2 \ar[l]^{r}            } \qquad
\xymatrix{
  R^2 \ar[d]_{r} \ar[rr]^{\varphi T\circ R\varphi} && T^2 \ar[d]^{t} \\
  R\ar[rr]_{\varphi} &&  T }\
\xymatrix{
                &\mathrm{Id}_{\mathcal{C}} \ar[ld]_{\rho }\ar[dr]^{\tau }
  \\
 R  \ar[rr]_{\mathrm{\varphi}} & &     T       }}
\end{center}
\caption{Monads and morphisms of monads.}
\label{fig:monad}
\end{figure}
\noindent We call $r$ and $\rho $ the \emph{multiplication} and the \emph{
unit} of the monad $R$, respectively.

For two monads $(R,r,\rho)$ and $(T,t,\tau )$ on $\mathcal{C}$, we say that a
natural
transformation $\varphi :R\rightarrow T$ is a \emph{morphism of monads} if
the last two diagrams in Figure~\ref{fig:monad} are commutative.
\end{definition}

\begin{example}\label{ex:k_alg}
Let $(\mathcal{C}, \otimes,\alpha,\iota_l,\iota_r,{\bf 1})$ be a
monoidal category with \emph{tensor product} $\otimes
:\mathcal{C}\times \mathcal{C} \rightarrow \mathcal{C}$ and
\emph{unit object} $\mathbf{1}\in \mathrm{OB}(\mathcal{C})$.
Recall that the associativity constraint $\alpha_{X,Y,Z}:(X\otimes
Y)\otimes Z\rightarrow X\otimes (Y\otimes Z)$ and the unit
constraints $\iota_r {X}:X\otimes \mathbf{1}\rightarrow X$ and
$\iota_l {X}:\mathbf{1}\otimes X\rightarrow X$ are natural
isomorphisms that obey \emph{Pentagon Axiom} and \emph{Triangle
Axioms}, cf. \cite[Chapter XI]{Ka}.

Algebras in monoidal categories can be defined as in the classical
case, of algebras over a commutative ring, see \cite[\S 1]{AMS}.
To such an algebra ${\bf R}$ in $\mathcal{C}$, with
multiplication $\mathbf{r}:{\bf R}\otimes {\bf R}\to {\bf R}$ and
unit $\boldsymbol{\rho}:{\bf 1}\to {\bf R}$, one associates a
monad $R:= {\bf R} \otimes (-)$ on $\mathcal{C}$. Its
multiplication and unit are respectively defined by the morphisms
$r X:=(\mathbf{r}\otimes X)\circ \alpha_{{\bf R,R},X}^{-1}$ and
$\rho X:=\boldsymbol{\rho}\otimes X$, where $X$ is an arbitrary
object in $\mathcal{C}$. An algebra homomorphism
$\boldsymbol{\varphi}:{\bf R}\to {\bf T}$ in $\mathcal{C}$ induces
a monad morphism $\boldsymbol{\varphi}\otimes (-) : {\bf R}\otimes
(-) \to {\bf T} \otimes (-)$ that will be denoted by $\varphi$.
\end{example}

\begin{definition}
For a monad $(R,r,\rho)$ on $\mathcal{C}$, a pair $(X,x)$ is said to be an
$R$-{\em module} if $X$ is an object in $\mathcal{C}$ and $x:RX\rightarrow X$
is a morphism such that the first two diagrams in Figure~ \ref{fig:module},
expressing associativity and unitality, are commutative. A
{\em morphism of $R$-modules} from $(X,x)$ to $(Y,y)$ is a morphism
$f:X\rightarrow Y$ in $\mathcal{C}$ such that the third diagram in
Figure~\ref{fig:module} is commutative. The category of $R$-modules is
denoted by $_{R}\mathcal{C}.$
\begin{figure}[h]
\begin{center}
\fbox{
\xymatrix{
  R^2X \ar[d]_{rX} \ar[r]^{Rx} & RX \ar[d]^{x} \\
  RX\ar[r]^{x} &  X }\qquad\qquad
\xymatrix{
                &RX \ar[dr]^{x}             \\
 X \ar[ru]^{\rho X} \ar[rr]_{\mathrm{Id}_X} & &     X        }\qquad\qquad
\xymatrix{
  RX \ar[d]_{Rf} \ar[r]^{x} & X \ar[d]^{f} \\
  RY\ar[r]^{y} &  Y } }
\end{center}
\caption{Modules over a monad and morphisms of modules. }
\label{fig:module}
\end{figure}
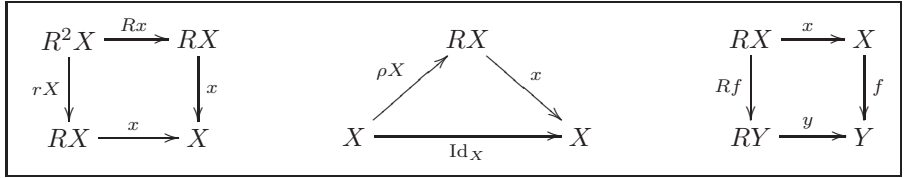
\end{definition}

\begin{example}
For a monad $R={\bf R}\otimes (-)$, induced by an algebra ${\bf
R}$ in a monoidal category ${\mathcal C}$ as in Example \ref{ex:k_alg}, ${}_R
\mathcal{C}$ is isomorphic to the category ${\bf
R}$-$\mathrm{Mod}$ of left modules for the algebra ${\bf R}$. For
the definition of modules over an algebra in a monoidal category,
see \cite[\S 1]{AMS}.
\end{example}

For a monad $(R,r,\rho)$ on a category $\mathcal{C}$, there is a faithful {\em
  forgetful functor} $\rho^\ast:{}_R\mathcal{C}\to \mathcal{C}$. It is given
  by the object map $(X,x)\mapsto X$ and it acts on the morphisms as the
  identity map. The forgetful functor possesses a left adjoint, the {\em free
  functor} $\rho_\ast:\mathcal{C}\to {}_R\mathcal{C}$. It has the object map
  $X\mapsto (RX,rX)$ and it acts on the morphisms by $f\mapsto Rf$.

Assuming that coequalizers exist in $\mathcal{C}$, it is natural to ask if
the category of modules over a monad $T$ on $\mathcal{C}$ has the same
property.
In what follows we will frequently use the standard result that this question
has a positive answer if $T$
preserves coequalizers in $\mathcal{C}$.
Recall that a functor  $F:\mathcal{C}\rightarrow \mathcal{D}$ preserves
coequalizers if, for any coequalizer $(Z,\pi )$ in $\mathcal{C}$
\begin{equation}  \label{dia:fork}
\begin{xy} (0,0)*+{X}="v7";(20,0)*+{Y}="v8";(40,0)*+{Z,}="v9";
{\ar@<1mm>^-{f} "v7";"v8" }; {\ar@<-.5mm>_-{g} "v7";"v8"}; {\ar@{->}^{ \pi}
"v8"; "v9"}; \end{xy}
\end{equation}
$(FZ,F\pi)$ is the coequalizer of $(Ff,Fg)$.
Note that in this case the canonical morphisms $\pi$ and $F\pi$ are
epimorphisms (while for an arbitrary epimorphism $p$ in $\mathcal{C}$, the
morphism $Fp$
is not necessarily an epimorphism in $\mathcal{D}$).

\begin{proposition}
\label{le:coequalizer}
Let $(T,t,\tau)$ be a monad on a category $\mathcal{C}$ that preserves
coequalizers.
Assume that $(Z,\pi)$ is the coequalizer of a parallel pair of morphisms $
(f,g)$ in $\mathcal{C}$, as in (\ref{dia:fork}). If, in addition, $X$ and $Y$
are $T$-modules
such that $f$ and $g$ are morphisms of $T$-modules, then there is a
unique $T$-module structure on $Z$ such that $(Z,\pi )$ is the coequalizer
of $(f,g)$ in $_{T} \mathcal{C}.$ In particular, if any pair of parallel
morphisms in $\mathcal{C}$ has a coequalizer then
any pair of parallel morphisms in ${}_T \mathcal{C}$ has a coequalizer, too.
\end{proposition}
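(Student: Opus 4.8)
The plan is to construct the required $T$-module structure on $Z$ directly, using the universal property of the coequalizer $(Z,\pi)$ together with the hypothesis that $T$ preserves coequalizers. First I would observe that since $T$ preserves coequalizers, $(TZ,T\pi)$ is the coequalizer of the parallel pair $(Tf,Tg)$ in $\mathcal{C}$. To define an action $z:TZ\to Z$, I will compose $\pi$ with the given actions on $Y$: consider the morphism $\pi\circ y:TY\to Z$, where $y:TY\to Y$ is the $T$-module structure of $Y$. Using that $f,g$ are morphisms of $T$-modules, one checks $\pi\circ y\circ Tf=\pi\circ f\circ x=\pi\circ g\circ x=\pi\circ y\circ Tg$, so by the universal property of the coequalizer $(TZ,T\pi)$ there is a unique $z:TZ\to Z$ with $z\circ T\pi=\pi\circ y$. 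This is forced, which gives uniqueness of the $T$-module structure making $\pi$ a module morphism.

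Next I would verify that $(Z,z)$ is indeed a $T$-module, i.e. that the associativity and unitality diagrams in Figure~\ref{fig:module} commute. For unitality, precomposing $z\circ\tau Z$ with the epimorphism $\pi$ and using naturality of $\tau$ together with $z\circ T\pi=\pi\circ y$ reduces the claim to unitality of $(Y,y)$; since $\pi$ is epi, the identity $z\circ\tau Z=\mathrm{Id}_Z$ follows. For associativity, the relevant morphisms have domain $T^2Z$; since $T$ preserves coequalizers, $T^2\pi$ is an epimorphism (being the composite $T(T\pi)$ of epimorphisms of the preserved-coequalizer type), so it suffices to prove $z\circ Tz\circ T^2\pi=z\circ tZ\circ T^2\pi$. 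Both sides can be rewritten, via the defining relation $z\circ T\pi=\pi\circ y$ (applied twice, and once after applying $T$), naturality of $t$, and the associativity of $(Y,y)$, as $\pi\circ y\circ Ty$; cancelling the epimorphism $T^2\pi$ finishes this point.

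Then I would check that $(Z,\pi)$ is the coequalizer of $(f,g)$ in ${}_T\mathcal{C}$. By construction $\pi$ is a $T$-module morphism, and $\pi\circ f=\pi\circ g$. Given any $T$-module $(W,w)$ and a module morphism $h:Y\to W$ with $h\circ f=h\circ g$, the universal property of $(Z,\pi)$ in $\mathcal{C}$ yields a unique $\bar h:Z\to W$ in $\mathcal{C}$ with $\bar h\circ\pi=h$; it remains to see $\bar h$ is a $T$-module morphism, namely $\bar h\circ z=w\circ T\bar h$. Precomposing with the epimorphism $T\pi$ and using $z\circ T\pi=\pi\circ y$, naturality, $h$ being a module morphism, and $\bar h\circ\pi=h$, both sides become $h\circ y=w\circ Th$ precomposed with $T\pi$; since $T\pi$ is epi, equality follows. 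The final sentence of the proposition is then immediate: if all parallel pairs in $\mathcal{C}$ have coequalizers, then given a parallel pair in ${}_T\mathcal{C}$, apply the forgetful functor $\rho^\ast$ (here $\tau^\ast$) to get a parallel pair in $\mathcal{C}$, take its coequalizer there, and lift it by the first part of the statement.

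The main obstacle is the repeated reliance on $T^n\pi$ being an epimorphism for $n=1,2$; this is not automatic from $\pi$ being an epimorphism, but it does follow from the preservation hypothesis as noted in the remark after \eqref{dia:fork} — $T$ applied to a coequalizer fork yields a coequalizer fork, hence its canonical map is epi, and iterating handles $T^2\pi$. Once that is in hand, every remaining step is a diagram chase that cancels such an epimorphism after rewriting both sides with the defining relation $z\circ T\pi=\pi\circ y$ and the module axioms of $Y$; there is no genuine difficulty beyond bookkeeping.
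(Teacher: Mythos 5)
Your proposal is correct and follows essentially the same route as the paper: define $z$ by the universal property of the coequalizer $(TZ,T\pi)$ via $z\circ T\pi=\pi\circ y$, verify associativity and unitality by cancelling the epimorphisms $TT\pi$ and $\pi$ (justified, as you note, by $T$ preserving coequalizers), and obtain the universal property in ${}_T\mathcal{C}$ from the factorization in $\mathcal{C}$. The only difference is that you spell out the check that $\overline{h}$ is a module morphism, which the paper dismisses as obvious.
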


\begin{proof}
Let $x$ and $y$ denote the actions of $T$ on $X$ and $Y$, respectively. Since
$(TZ,T\pi )$
is the coequalizer of $(Tf,Tg)$ in $\mathcal{C}$, and $\pi \circ y$
coequalizes $(Tf,Tg)$, it
follows that there is a unique morphism $z:TZ\rightarrow Z$ such that
\begin{equation}
z\circ T\pi =\pi \circ y.  \label{ec:z}
\end{equation}
Our aim is to prove that $(Z,z)$ is a $T$-module.
Since $T$ is a functor, in view of (\ref{ec:z}), associativity of $y$ and of
the fact that $t$ is natural, we get
\begin{equation*}
z\circ Tz\circ T T\pi=
z\circ T\pi \circ Ty=\pi \circ
y\circ Ty=\pi \circ y\circ tY=z\circ T\pi \circ tY=z\circ tZ\circ TT\pi .
\end{equation*}
Since $T$ preserves coequalizers, $TT\pi$ is an epimorphism, so the
associativity 
condition in the definition of $T$-modules is verified. In a similar way one
can show that $\tau Z$ is a right inverse of $z$. Thus $(Z,z)$ is a $T$
-module and, in view of (\ref{ec:z}), $\pi $ is a morphism of $T$-modules.

Let $h$ be a morphism in $_{T}\mathcal{C}$ that coequalizes $(f,g).$ As $
(Z,\pi)$ is the coequalizer of $(f,g)$ in $\mathcal{C}$, there is a
unique morphism $\overline{h}$ in $\mathcal{C}$ such that $\overline{ h}
\circ \pi =h.$ Obviously, $\overline{h}$ is a morphism of $T$-modules, so
the proposition is proved.
\end{proof}

Let $\varphi:R\to T$ be a morphism of monads on $\mathcal{C}$. We show that
one can associate to $\varphi$ ``forgetful'' and ``free'' functors connecting
the categories of modules over $R$ and $T$.
The construction of $\varphi ^{\ast }:{}_{T}\mathcal{C}\rightarrow
_{R}\mathcal{C}$ is quite obvious, hence the proof is left to the reader.

\begin{proposition}
\label{pr:phi_*} There exists a functor $\varphi ^{\ast }:{}_{T}\mathcal{C}
\rightarrow {}_{R}\mathcal{C}$ such that
\begin{equation}
\varphi ^{\ast }(X,x):=(X,x\circ \varphi X)  \label{ec:phi_*}
\end{equation}
and $\varphi ^{\ast }f=f,$ for every $T$-module $(X,x)$ and every $T$-module
morphism $f$.
\end{proposition}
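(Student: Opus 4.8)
The plan is to take the two formulas in the statement as the \emph{definition} of $\varphi^\ast$ on objects and on morphisms, and then to check the three things this requires: that $\varphi^\ast(X,x)=(X,x\circ\varphi X)$ is an $R$-module, that $\varphi^\ast f$ is a morphism of $R$-modules whenever $f$ is a morphism of $T$-modules, and that $\varphi^\ast$ respects identities and composition. The last point is immediate since $\varphi^\ast$ acts as the identity on underlying morphisms of $\mathcal{C}$. For unitality of $(X,x\circ\varphi X)$, I would precompose with $\rho X$ and compute $(x\circ\varphi X)\circ\rho X=x\circ(\varphi X\circ\rho X)=x\circ\tau X=\mathrm{Id}_X$, where the middle equality is the unit axiom for a morphism of monads ($\varphi\circ\rho=\tau$, the last diagram of Figure~\ref{fig:monad}) and the last equality is unitality of the $T$-module $(X,x)$.

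The only computation with any content is associativity. Starting from
\begin{equation*}
(x\circ\varphi X)\circ R(x\circ\varphi X)=x\circ\varphi X\circ Rx\circ R\varphi X,
\end{equation*}
I would rewrite $\varphi X\circ Rx=Tx\circ\varphi TX$ by naturality of $\varphi:R\to T$ evaluated at the morphism $x:TX\to X$, then use associativity of the $T$-module to replace $x\circ Tx$ by $x\circ tX$, and finally apply the multiplicativity axiom of a morphism of monads ($t\circ(\varphi T\circ R\varphi)=\varphi\circ r$, the third diagram of Figure~\ref{fig:monad}), i.e. $tX\circ\varphi TX\circ R\varphi X=\varphi X\circ rX$, to arrive at $x\circ\varphi X\circ rX=(x\circ\varphi X)\circ rX$, as required. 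For the compatibility with morphisms, given a $T$-module map $f:(X,x)\to(Y,y)$, so that $f\circ x=y\circ Tf$, I would compute $f\circ(x\circ\varphi X)=(f\circ x)\circ\varphi X=y\circ Tf\circ\varphi X$, rewrite $Tf\circ\varphi X=\varphi Y\circ Rf$ by naturality of $\varphi$ at $f:X\to Y$, and conclude $f\circ(x\circ\varphi X)=(y\circ\varphi Y)\circ Rf$, which is exactly the statement that $f$ is a morphism from $\varphi^\ast(X,x)$ to $\varphi^\ast(Y,y)$.

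There is essentially no obstacle here: every step uses only naturality of $\varphi$, the two axioms for a morphism of monads, and the module axioms for $T$, and each step has a transparent rendering in the string calculus of Section~\ref{sec:bimod}. The one point that deserves a little care is bookkeeping of the objects at which natural transformations are evaluated, since in the associativity computation $\varphi$ is applied both at $X$ and at $TX$; keeping the base object visible (as in the paper's convention) makes this unambiguous. This is why the proof is safely left to the reader, and we only record the result here for later reference.
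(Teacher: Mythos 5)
Your verification is correct and is exactly the routine check the paper intends: the paper explicitly leaves this proof to the reader as obvious, and your argument (unitality from $\varphi\circ\rho=\tau$, associativity from naturality of $\varphi$ at $x$ plus the $T$-module axiom and $t\circ\varphi T\circ R\varphi=\varphi\circ r$, and morphism compatibility from naturality of $\varphi$ at $f$) is the standard one. No gaps.
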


\begin{proposition}
\label{pr:adjoint functors}Let $\mathcal{C}$ be a category with
coequalizers. If $(R,r,\rho)$ and $(T,t,\tau)$ are monads on $\mathcal{C}$,
such that $T$ that preserves coequalizers, and $\varphi:R\to T$ is a morphism
of monads then the functor $\varphi^{\ast }$ in Proposition \ref{pr:phi_*} has
a left adjoint.
\end{proposition}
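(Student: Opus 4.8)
The plan is to construct the left adjoint $\varphi_\ast:{}_R\mathcal{C}\to {}_T\mathcal{C}$ explicitly as a ``relative free'' functor, mimicking the classical ring-theoretic construction $T\otimes_R(-)$, and then verify the adjunction. First I would note that for an $R$-module $(X,x)$, the $T$-module $\varphi_\ast(X,x)$ should be the coequalizer in ${}_T\mathcal{C}$ of the pair of $T$-module morphisms
\begin{equation*}
tX\circ T\varphi X,\ Tx\ :\ \rho_\ast\rho^\ast\rho_\ast X=(TRX,tRX)\ \rightrightarrows\ (TX,tX)=\rho_\ast X,
\end{equation*}
where both maps are indeed morphisms of $T$-modules (the first by naturality of $t$ and the second because $t$ is associative). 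Since $\mathcal{C}$ has coequalizers and $T$ preserves them, Proposition~\ref{le:coequalizer} guarantees that this coequalizer exists in ${}_T\mathcal{C}$; call it $(\varphi_\ast(X,x),\pi_X)$, so $\pi_X\circ tX\circ T\varphi X=\pi_X\circ Tx$. One also checks that $\varphi_\ast$ is functorial: an $R$-module morphism $f:(X,x)\to(Y,y)$ induces, via $Tf$, a morphism between the two parallel pairs, hence a unique morphism $\varphi_\ast f$ on coequalizers, with functoriality following from uniqueness.

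Next I would establish the adjunction $\Hom_{{}_T\mathcal{C}}(\varphi_\ast(X,x),(W,w))\cong \Hom_{{}_R\mathcal{C}}((X,x),\varphi^\ast(W,w))$, natural in both variables. Given a $T$-module morphism $g:\varphi_\ast(X,x)\to (W,w)$, the composite $g\circ \pi_X\circ \tau X\circ (\text{unit part})$ — more precisely $g\circ\pi_X\circ\tau X:X\to W$, where $\tau X:X\to TX$ — is a morphism of $R$-modules $(X,x)\to\varphi^\ast(W,w)$, using the coequalizer relation and unitality. Conversely, given an $R$-module morphism $h:(X,x)\to\varphi^\ast(W,w)$, the morphism $w\circ Th:TX\to W$ is a $T$-module morphism $\rho_\ast X\to (W,w)$ that coequalizes the parallel pair above (here one uses that $h$ intertwines the $R$-actions, i.e.\ $w\circ\varphi X\circ Rh = h\circ x$, together with associativity of $w$), hence factors uniquely through $\pi_X$ to give $\widehat h:\varphi_\ast(X,x)\to(W,w)$. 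Then I would check these two assignments are mutually inverse, using in one direction the universal property of the coequalizer (uniqueness of factorizations through the epimorphism $\pi_X$) and in the other direction the unit axiom of the monad $T$ together with the defining relation $\tau=\varphi\circ\rho$ from Figure~\ref{fig:monad}.

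The main obstacle I anticipate is bookkeeping rather than conceptual difficulty: one must repeatedly invoke that $\pi_X$ (and images of it under $T$, since $T$ preserves coequalizers, hence $T\pi_X$ is epi) is an epimorphism in order to conclude equality of morphisms out of $\varphi_\ast(X,x)$, and one must be careful that every morphism written down is genuinely a morphism in the correct module category before applying a universal property there. Naturality of the bijection in $(X,x)$ and in $(W,w)$ is then routine from the uniqueness clauses in the universal properties. I would present the construction of $\varphi_\ast$ on objects and the two mutually inverse hom-set maps as the substance of the proof, and relegate the diagram-chasing verifications (that the parallel pair consists of $T$-module maps, that $w\circ Th$ coequalizes it, that the two maps are inverse, and naturality) to brief remarks, since each is a direct consequence of the monad and module axioms together with Proposition~\ref{le:coequalizer}.
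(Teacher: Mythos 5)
Your proposal follows essentially the same route as the paper: you define $\varphi_\ast(X,x)$ as the coequalizer in ${}_T\mathcal{C}$ of the pair $(tX\circ T\varphi X,\,Tx)$ via Proposition~\ref{le:coequalizer}, and establish the adjunction by the same two hom-set maps $g\mapsto g\circ\pi X\circ\tau X$ and $h\mapsto$ the unique factorization of $w\circ Th$ through $\pi X$, checked to be mutually inverse using that $\pi X$ and $T\pi X$ are epimorphisms. The argument is correct; only your parenthetical attributions are slightly off (it is $Tx$ that is a $T$-module map by naturality of $t$, while $tX\circ T\varphi X$ needs associativity as well, and the relation $\tau=\varphi\circ\rho$ is not actually needed in the inverse checks), but these are immaterial to the proof.
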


\begin{proof}
Let $(X,x)$ be an $R$-module. It is not difficult to see that $(TRX,tRX)$
and $(TX,tX)$ are $T$-modules, and that $tX\circ T\varphi X$ and $Tx$ are
morphism of $T$-modules. Let
\begin{equation}
\left( \varphi _{\ast }(X,x),\pi X\right) :=\mathrm{Coeq}(tX\circ T\varphi
X,Tx).  \label{ec:fi_ast}
\end{equation}
By Proposition \ref {le:coequalizer}, $\varphi _{\ast }(X,x)$ is a $T$-module
such that $\pi
X:(TX,tX)\rightarrow \varphi _{\ast }(X,x)$ is a morphism of $T$-modules. By
the universal property of coequalizers, for any $R$-module morphism $
f:X\rightarrow Y,$ there is a $T$-module morphism $\varphi _{\ast }f$
such that
\begin{equation}\label{eq:pi_nat}
\varphi _{\ast }f\circ \pi X=\pi Y\circ Tf.
\end{equation}
In conclusion,
we have constructed a functor $\varphi_\ast$ from $_{R}\mathcal{C}$ to
$_{T}\mathcal{C}.$ Moreover, one can
compose the forgetful functor $\rho^\ast:{}_R\mathcal{C}\to \mathcal{C}$ with
the free functor $\tau_\ast:\mathcal{C}\to {}_T \mathcal{C}$ to obtain
a functor from $_{R}\mathcal{C}$ to $_{T}\mathcal{C}$, given by $(X,x)\mapsto
(TX,tX)$ and $f\mapsto Tf$. Therefore, (\ref{eq:pi_nat}) means that $(\pi
X)_{X\in \text{\textrm{Ob} }_{R}\mathcal{C}}$ defines a natural transformation
$\pi:\tau_\ast \rho^\ast\to \varphi _{\ast}$.

It remains to show that $\varphi _{\ast }$ is a left adjoint of $\varphi
^{\ast }.$ For objects $(X,x)$ in $_{R}\mathcal{C}$ and $(Y,y)$ in
$_{T}\mathcal{C}$ we define
\begin{equation*}
\Phi _{X,Y}:\mathrm{Hom}_{_{T}\mathcal{C}}(\varphi _{\ast
}(X,x),(Y,y))\rightarrow \mathrm{Hom}_{_{R}\mathcal{C}}((X,x),\varphi ^{\ast
}(Y,y)),\ \ \ \Phi _{X,Y}(f)=f\circ \pi X\circ \tau X.
\end{equation*}
In order to prove that $\Phi _{X,Y}(f)$ is a morphism of $R$-modules,
recall that, by the proof of Proposition~\ref{le:coequalizer}, the module
structure on $\varphi _{\ast }(X,x)$ is given by the unique morphism $
\overline{x}$ satisfying
\begin{equation}
\overline{x}\circ T\pi X=\pi X\circ tX.  \label{ec:x bar}
\end{equation}
As $\pi X\circ tX\circ T\varphi X=\pi X\circ Tx$, by a simple but tedious
computation, one can show that
\begin{equation*}
y\circ \varphi {Y}\circ R\Phi _{X,Y}(f)=\Phi _{X,Y}(f)\circ x,
\end{equation*}
i.e. $\Phi _{X,Y}(f)$ is a morphism of $R$-modules from $(X,x)$ to
$\varphi ^{\ast }(Y,y).$

We are going to construct an inverse of $\Phi _{X,Y}.$ Let $g$ be a morphism
of $R$-modules from $(X,x)$ to $\varphi ^{\ast }(Y,y)$. Then $y\circ Tg$ is
a morphism of $T$-modules and coequalizes $(Tx,tX\circ T\varphi X).$ Since $
\varphi _{\ast }(X,x)$ is the coequalizer in $_{T}\mathcal{C}$ of this pair,
there exists a unique $T$-module morphism $\Theta _{X,Y}(g)$ such that
\begin{equation*}
\Theta _{X,Y}(g)\circ \pi X=y\circ Tg.
\end{equation*}
Furthermore, by construction of $\Phi _{X,Y}$ and $\Theta _{X,Y}$, and the
facts that $\tau$ is natural and $y$ is unital, we get
\begin{equation*}
\Phi _{X,Y}\left( \Theta _{X,Y}(g)\right) =
\Theta _{X,Y}(g)\circ \pi X\circ\tau X=
y\circ Tg\circ \tau X =
y\circ \tau Y\circ g = g.
\end{equation*}
Thus we deduce that $\Theta _{X,Y}$ is a right inverse of $\Phi _{X,Y}$. Let $
f:\varphi _{\ast }(X,x)\to (Y,y)$ be a morphism of $T$-modules. To conclude,
we must prove
\begin{equation*}
\Theta _{X,Y}(\Phi _{X,Y}(f))\circ \pi X=f\circ \pi X.
\end{equation*}
The left hand side of this relation can be rewritten as
\begin{equation*}
y\circ T\Phi _{X,Y}(f)=y\circ Tf\circ T\pi X\circ T\tau X=f\circ
\overline{x}\circ T\pi X\circ T\tau X=f\circ \pi X\circ tX\circ T\tau
X=f\circ \pi X,
\end{equation*}
where for the first two equalities we used the definitions of $\Phi _{X,Y}$
and $\Theta _{X,Y}.$ Since $f$ and $\pi X$ are morphisms of $T$-modules we
obtained the third and the fourth relations, while for the last one we used
the definition of monads.
\end{proof}

\begin{example}\label{ex:phi*}
Let $(\mathcal{C,}\otimes,\alpha,\iota_l,\iota_r,\mathbf{1})$ be an abelian
monoidal category, that is $\mathcal{C}$ be abelian, and $X\otimes
(-):\mathcal{C}\rightarrow \mathcal{C}$ and $(-)\otimes
X:\mathcal{ C}\rightarrow \mathcal{C}$ be additive and right
exact functors, for any $ X\in \mathcal{C}.$ For details on
abelian monoidal categories, the reader is referred to \cite[\S
1.]{AMS}. Following \cite[(1.11)]{AMS}, for a right $\bf R$-module
$\mu_X:X\otimes \mathbf{R}\to X$ and a left $\bf R$-module
$\mu_Y:\mathbf{R}\otimes Y \to Y$ one defines $X\otimes
_{\mathbf{R}}Y$ by
\begin{equation*}
X\otimes _{\mathbf{R}}Y:=\mathrm{Coker}(\mu _{X}\otimes Y-X\otimes \mu _{Y}).
\end{equation*}
Take another algebra $\bf T$ in $\mathcal{C}$. Proceeding as in
\cite[(1.4)]{AMS} one defines the category $\boldsymbol{T}$-$\mathrm{Mod}$-$
\boldsymbol{R}$ of $\bf {T}$-$\bf {R}$ bimodules in $\mathcal{C}$. By
definition, $(X,\mu _{X}^{l},\mu _{X}^{r})$ is an ${\bf T}$-${\bf R}$ bimodule
if $\mu _{X}^{l}:{\bf T}\otimes X\rightarrow X$ and $\mu _{X}^{r}:X\otimes
{\bf R}\rightarrow X$ define compatible left and right module
structures such that
\begin{equation}
\mu _{X}^{l}\circ (T\otimes \mu _{X}^{r})=\mu _{X}^{r}\circ
(\mu _{X}^{l}\otimes R)\circ \alpha_{{\bf T},X,{\bf R}}^{-1}.
\label{ec:bimod}
\end{equation}
If  $(X,\mu_X^l,\mu_X^r)$ is a $\bf T$-$\bf R$ bimodule and
$(Y,\mu_Y^l,\mu_Y^r)$ is an $\bf R$-$\bf S$ bimodule, then
$X\otimes_{\bf R} Y$ is a $\bf T$-$\bf S$ bimodule, for any
algebra $\bf S$ in $\mathcal{C}$. Its left $\bf T$-action and
right $\bf S$-action are respectively induced by $\mu
_{X}^{l}\otimes Y$ and $X\otimes \mu _{Y}^{r}.$ As in
\cite[Theorem 1.12]{AMS}, one can prove that
$\mathbf{T}$-$\mathrm{Mod}$-$\mathbf{R}$ is abelian. Moreover,
$\mathbf{R}$-$\mathrm{Mod}$-$\mathbf{R}$ is monoidal with respect
to $ (-)\otimes _{\bf R}(-),$ see \cite[Theorem 1.12]{AMS}.

For a monad morphism $\boldsymbol{\varphi}\otimes (-): {\bf R}
\otimes (-) \to  {\bf T} \otimes (-)$, induced by an algebra
homomorphism $\boldsymbol{\varphi}:{\bf R}\to {\bf T}$, the
functors in Propositions \ref{pr:phi_*} and \ref{pr:adjoint
functors} are $\varphi^\ast:{\bf T}$-$\mathrm{Mod}\to {\bf
R}$-$\mathrm{Mod}$, the `restriction of scalars', and
$\varphi^\ast={\bf T}\otimes_{\bf R} (-):{\bf R}$-$\mathrm{Mod}
\to {\bf T}$-$\mathrm{Mod}$, the `extension of scalars'.
\end{example}

Distributive laws were introduced by J. Beck \cite{Be}. As we
shall see later, they give a way to compose two monads in order to
obtain a monad.

\begin{definition}
\label{de:distributive law} A \emph{distributive law} between two monads
$(R,r,\rho )$ and $(T,t,\tau ) $ is a natural transformation $\mathfrak{l}
:RT\rightarrow TR$ satisfying the four conditions in Figure~\ref{fig:DLaw}.
\begin{figure}[h]
\begin{center}
{\includegraphics[scale=.78]{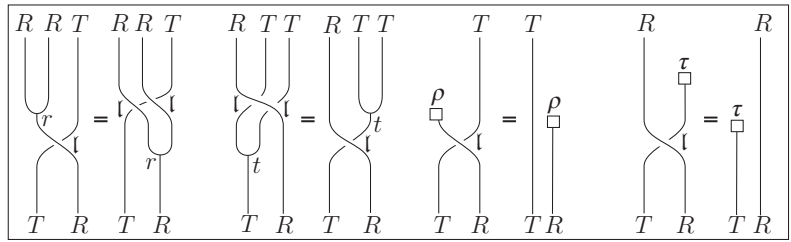}}
\end{center}
\caption{The definition of distributive laws.}
\label{fig:DLaw}
\end{figure}
\end{definition}

\begin{remark}
Since we are using for the first time the diagrammatic representation of
morphisms, we note that the first and the third relations in Figure~\ref
{fig:DLaw} can be explicitly written as follows:
\begin{equation*}
\mathfrak{l}\circ rT =Tr\circ \mathfrak{l} R\circ R\mathfrak{l},\qquad\qquad
\mathfrak{l}\circ\rho T = T\rho.
\end{equation*}
\end{remark}

\begin{example}\label{ex:lrt}
Let $\mathbf{R}$ and $\mathbf{T}$ be two algebras in a braided
monoidal category $(\mathcal{C},
\otimes,\alpha,\iota_l,\iota_r,{\bf 1},\chi)$, with braiding
$\chi:\mathcal{C}\times\mathcal{C}\to
\mathcal{C}\times\mathcal{C}$. Let $R={\bf R}\otimes (-)$ and
$T={\bf T}\otimes (-)$ be the monads induced by ${\bf R}$ and
${\bf T}$, as in Example \ref{ex:k_alg}. For an object $X$ in $\mathcal{C}$,
we put
\begin{align*}
\mathfrak{l}X& :=\alpha_{\mathbf{T},\mathbf{R},X}\circ \left( \chi_{\mathbf{R},
\mathbf{T}}\otimes X\right) \circ \alpha_{\mathbf{R},\mathbf{T},X}^{-1}, \\
\mathfrak{r}X& :=\alpha_{\mathbf{R},\mathbf{R},X}\circ \left( \chi_{\mathbf{R},
\mathbf{R}}\otimes X\right) \circ \alpha_{\mathbf{R},\mathbf{R},X}^{-1}, \\
\mathfrak{t}X& :=\alpha_{\mathbf{T},\mathbf{T},X}\circ \left( \chi_{\mathbf{T},
\mathbf{T}}\otimes X\right) \circ
\alpha_{\mathbf{T},\mathbf{T},X}^{-1}.
\end{align*}
One can see easily that $\mathfrak{l}:RT\to TR$,
$\mathfrak{r}:RR\to RR$ and $\mathfrak{t}:TT\to TT$ are
distributive laws.
\end{example}

\begin{definition}
\label{def:BD-law} Consider a monad $(T,t,\tau)$ on a category
$\mathcal{C}$. A distributive law $\mathfrak{t}:TT\rightarrow TT$ is
said to be a {\em BD-law} if it satisfies the \emph{YB}-equation:
\begin{equation*}
\mathfrak{t}T\circ T\mathfrak{t}\circ \mathfrak{t}T=T\mathfrak{t}\circ
\mathfrak{t}T\circ T\mathfrak{t}.
\end{equation*}
For a diagrammatic representation of the \emph{YB}-equation see the first
picture in Figure~\ref{fig:Septuple}.
\end{definition}

The distributive laws $\mathfrak{t}$ and $\mathfrak{r}$ in Example
\ref{ex:lrt} are, in fact, \emph{BD}-laws.

\section{Braided pairs of monads and their morphisms}
\label{sec:construction}

Our starting point of a construction of para-cocyclic objects will be the
following notion.

\begin{definition}
\label{de:septuple}
The sextuple $\mathcal{S}:=(\mathcal{C},T,R,\mathfrak{l},\mathfrak{t},
\mathfrak{r})$ is said to be a
\emph{\bpm} if
and only if the following conditions are satisfied.
\begin{itemize}
\item $\mathcal{C}$ is a category, in which any pair of parallel morphisms has
  a coequalizer.

\item $(R,r,\rho)$ and $(T,t,\tau)$ are monads on $\mathcal{C}$, both of which
  preserve coequalizers.

\item $\mathfrak{l}:RT\rightarrow TR$ is an {\em  invertible} distributive law.

\item $\mathfrak{t}:TT\rightarrow TT$ and $\mathfrak{r}:RR\to RR$ are
  {\em invertible} BD-laws such that
conditions (A~1) and (A~2) in Figure~\ref{fig:Septuple} hold.
\end{itemize}
\begin{figure}[h]
\begin{center}
\includegraphics[scale=.78]{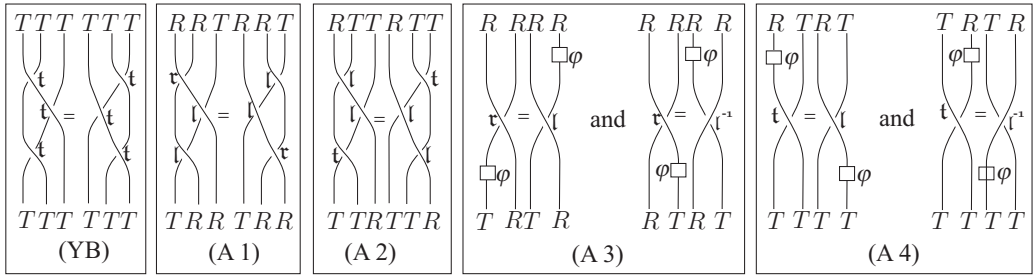}
\end{center}
\caption{\emph{YB}-equation and conditions (A~1)--(A~4).}
\label{fig:Septuple}
\end{figure}

A morphism of monads $\varphi :R\rightarrow T$ is said to be {\em
\lb} if it satisfies conditions (A~3) and (A~4) in
Figure~\ref{fig:Septuple}.
\end{definition}

\begin{remark}
In the proof of several results we do not need all assumptions
from the definition of a \lb monad morphism in Definition
\ref{de:septuple}. Still, for reader's convenience, we prefer to
state all of them at the same time.
\end{remark}

\begin{example}
\label{ex:alg_sep} Let $\boldsymbol{\varphi}:(\mathbf{R},\mathbf{r},
\boldsymbol{\rho})\rightarrow
(\mathbf{T},\mathbf{t},\boldsymbol{\tau})$
be a morphism of algebras in a braided monoidal category $(\mathcal{C,}
\otimes ,\alpha ,\iota _{l},\iota _{r},\mathbf{1},\chi )$. We take
the monads $R$ and $T$ as in Example \ref{ex:k_alg}, and the
distributivity laws
$\mathfrak{l}$, $\mathfrak{r}$ and $\mathfrak{t}$ as in Example \ref{ex:lrt}
. Clearly,
$(\mathcal{C},R,T,\mathfrak{l},\mathfrak{r},\mathfrak{t})$ is a
braided pair of monads. Moreover, by pushing $\boldsymbol{\varphi}$ over $
\mathfrak{r}$ in the second diagram in (A~3) and under
$\mathfrak{t}$ in the
second picture in (A~4), it follows that $\varphi :=\boldsymbol{\varphi}
\otimes (-)$ is a braid preserving monad morphism if, and only if,
\begin{equation}\label{eq:br_ex}
(\boldsymbol{\varphi}\otimes \mathbf{T})\circ \chi _{
\mathbf{T,R}} =(\boldsymbol{\varphi}\otimes
\mathbf{T})\circ \chi _{\mathbf{R},\mathbf{T}}^{-1}, 
\qquad \textrm{and}\qquad
\chi _{\mathbf{R,T}}\circ (\mathbf{R}\otimes  \boldsymbol{\varphi}) =
\chi_{\mathbf{T,R}}^{-1}\circ (\mathbf{R}\otimes
\boldsymbol{\varphi}).
\end{equation}
For example, if $\chi _{\mathbf{T,R}}=\chi_{\mathbf{R},\mathbf{T}}^{-1}$,
then $\varphi $ is such a morphism. In a similar manner one can show that $
\varphi $ is a braid preserving monad morphism, provided that $\chi _{
\mathbf{R,R}}=\chi _{\mathbf{R},\mathbf{R}}^{-1}$ and $\chi _{\mathbf{T,T}
}=\chi _{\mathbf{T},\mathbf{T}}^{-1}$. For, it is enough to
rewrite the second equalities in (A~3) and (A~4) by pushing
$\varphi $ under $\mathfrak{l }^{-1}.$ 
\end{example}

Distributive laws, as we have already remarked, were introduced by Beck in
order to construct a monad on the composite of two monads.
For two monads $(R,r,\rho)$ and $(T,t,\tau)$, a
distributive law
$\mathfrak{l}:RT \to TR$ induces a monad $TR$ on $\mathcal{C},$ with
multiplication and unit 
\begin{equation*}
Tr\circ tR^{2}\circ T\mathfrak{l}R=tR\circ T^2r\circ T\mathfrak{l}R:TRTR\to TR
\quad\textrm{and}\quad
T\rho\circ \tau =\tau R \circ\rho :\mathrm{Id}_{\mathcal{C}}\to TR.
\end{equation*}
Therefore, it makes sense to speak about $_{TR}\mathcal{C}$, the category $
TR $-modules. Its objects can be described equivalently as follows. Let $
(X,x)$ be an object in $_{TR} \mathcal{C}.$ We set $x_{T}:=x\circ T\rho X$ and
$x_{R}:=x\circ \tau RX.$ One can prove that $(X,x_R)$ is an $R$-module, $
(X,x_T)$ is a $T$-module and these structures commute in the sense that
\begin{equation}
x_{T}\circ Tx_{R}\circ \mathfrak{l}X=x_{R}\circ Rx_{T}.  \label{ec:TR module}
\end{equation}
Conversely, if $(X,x_{R})$ is an $R$-module and $(X,x_{T})$ is a $T$-module
that obey relation (\ref{ec:TR module}) then $X$ is an $TR$-module with
respect to $x:=x_{T}\circ Tx_{R}.$ Moreover, a morphism in $\mathcal{C}$ is
a morphism of $TR$-modules if, and only if, it is a morphism of $R$-modules
and $T$-modules with respect to the above defined structures. Thus, $_{TR}
\mathcal{C}$ is isomorphic to the category of triples $(X,x_{R},x_{T})$ such
that $x_{R}$ and $x_{T}$ define commuting module structures over $R$ and $T,$
respectively. For details see \cite{KLV}.

For any  \emph{BD}-law  $\mathfrak{r}:RR \to RR$, one can deform the monad
$(R,r,\rho )$ to obtain a new monad
\begin{equation*}
R^{\mathfrak{r}}:=R,\qquad \qquad r^{\mathfrak{r}}:=r\circ \mathfrak{r},\qquad
\qquad \rho ^\mathfrak{r}{}:=\rho .
\end{equation*}
For a
\bpm
$\mathcal{S}:=(\mathcal{C},T,R,\mathfrak{l},\mathfrak{t},\mathfrak{r})$ and a
\lb monad morphism $\varphi:R\to T$,
it follows by the definition of distributive laws and (A~1) that
$\mathfrak{l}:R^{\mathfrak{r}}T\rightarrow TR^{\mathfrak{r}}$ is a
distributive law too.
Hence $TR^{\mathfrak{r}}$ admits a monad structure. By the above
considerations, the category of  $TR^{\mathfrak{r}}$-modules is isomorphic to
the category of triples $(X,x_R^0,x_T)$, where
$(X,x_R^0)$ is an $R^{\mathfrak{r}}$-module and $(X,x_T)$ is a
$T$-module such that the compatibility condition
\begin{equation}\label{eq:T-R_bim}
x_{T}\circ Tx_{R}^0\circ
\mathfrak{l}X=x_{R}^0\circ
    R^{\mathfrak{r}}x_{T}
\end{equation}
holds. Their morphisms are both $T$-module, and $R^{\mathfrak{r}}$-module
morphisms. We call the triple $(X,x_R^0,x_T)$ a
$(T,R)$\emph{-bimodule} and denote their category by ${}_T \mathcal{C}_R$.

\begin{remark}\label{rem:R_op}
By the above considerations, in particular, for any monad $R$ and
{\em BD}-law $\mathfrak{r}:RR\to RR$,
also $\mathfrak{r}:$ $R^{\mathfrak{r}}R\rightarrow
RR^{\mathfrak{r}}$ is a distributive law. Hence, as above,
$R^{e}:=RR^{\mathfrak{r}}$ is a monad, that is called the \emph{enveloping
monad of }$R.$
The category ${}_{R^e}\mathcal{C}$ is isomorphic to the category
of $(R,R)$-bimodules. Recall that an $(R,R)$-bimodule is an
object $X$ in $\mathcal{C}$, together with two morphisms $x:RX\rightarrow X$
and $ x^0:R^{\mathfrak{r}}X\rightarrow X$ such that $(X,x)$ is an
$R$-module, $(X,x^0)$ is an $R^{\mathfrak{r}}$-module and these
structures commute in the sense that
\begin{equation}\label{eq:R-bimod}
x\circ Rx^0\circ \mathfrak{r}X=x^0\circ R^{\mathfrak{r}}x.
\end{equation}
The category of $(R,R)$-bimodules will be denoted by ${}_R \mathcal{C}_R$. In
order to simplify notations, the forgetful functor $U:{}_R \mathcal{C}_R \to
\mathcal{C}$ (with object map $(X,x,x^0)\mapsto X$) will be omitted in our
formulae whenever it causes no danger.

For an \emph{invertible} \emph{BD}-law  $\mathfrak{r}:RR \to RR$, also
$\mathfrak{r}^{-1}: R^{\mathfrak{r}} R^{\mathfrak{r}} \to R^{\mathfrak{r}}
R^{\mathfrak{r}}$ is an (invertible) \emph{BD}-law. Hence the previous
construction can be repeated with $\mathfrak{r}:RR \to RR$ replaced by
$\mathfrak{r}^{-1}: R^{\mathfrak{r}} R^{\mathfrak{r}} \to R^{\mathfrak{r}}
R^{\mathfrak{r}}$. The objects of the resulting category
${}_{R^{\mathfrak{r}}}\mathcal{C}_{R^{\mathfrak{r}}}$ are triples $(X,x^0,x)$,
where $(X,x^0)$ and $(X,x)$ are $R^{\mathfrak{r}}$ and
$R=(R^{\mathfrak{r}})^{\mathfrak{r}^{-1}}$-modules, respectively, such that
\begin{equation*}
x^0\circ R^{\mathfrak{r}}x\circ \mathfrak{r}^{-1}X=x\circ
Rx^0.
\end{equation*}
Hence the object map $(X,x,x^0)\mapsto (X,x^0,x)$
induces an isomorphism ${}_R \mathcal{C}_R\to
{}_{R^{\mathfrak{r}}}\mathcal{C}_{R^{\mathfrak{r}}}$.
\end{remark}

\begin{example}\label{ex:rcr}
Let $R$ and $T$ be the monads coming from  algebras ${\bf R}$ and
${\bf T}$ in a  braided monoidal $\mathcal{C}$, as in Example
\ref{ex:k_alg}. We denote by $\mathfrak{l}$, $\mathfrak{r}$ and
$\mathfrak{t}$ the distributive laws defined in Example
\ref{ex:lrt}. In this case,
the category ${}_T \mathcal{C}_R$ is isomorphic to the category ${\bf
T}$-$\mathrm{Mod}$-${\bf R}$, defined in Example \ref{ex:phi*}.
Indeed, an object in $_{T}\mathcal{C}_{R}$ is a triple
$(X,x,x^{0}),$ such that $(X,x)$ is a $T$-module, $(X,x^{0})$ is
an $R^{\mathfrak{r}}$-module and these structures satisfy
relation (\ref{eq:T-R_bim}). We define a functor
\begin{equation*}
F:{}_{T}\mathcal{C}_{R}\rightarrow \mathbf{T}\text{-}\mathrm{Mod}\text{-}
\mathbf{R,\qquad }F(X,x,x^{0}):=(X,x,x^{0}\circ
\chi_{\mathbf{R},X}^{-1}).
\end{equation*}
Let us first prove that $F$ is well defined, that is $(X,x,x^{0}\circ \chi_{
\mathbf{R},X}^{-1})$ is an object in
$\mathbf{T}$-$\mathrm{Mod}$-$\mathbf{R.}
$ We have to check that $(X,x^{0}\circ \chi_{\mathbf{R},X}^{-1})$ is a right $
\mathbf{R}$-module and that the structure maps obey (\ref{ec:bimod}).
\begin{figure}[t]
\begin{center}
{\includegraphics[scale=.78]{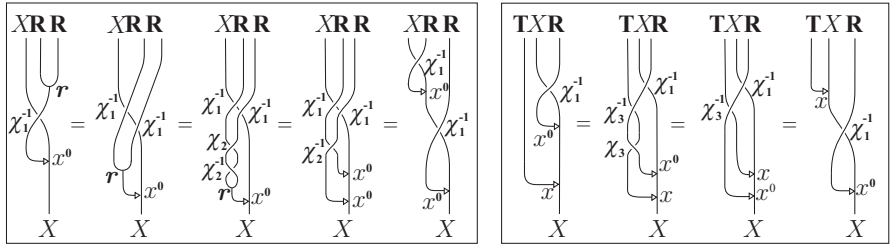}}
\end{center}
\caption{The categories $_{T}\mathcal{C}_{R}$ and $\mathbf{T}$-$\mathrm{Mod}$
-$\mathbf{R}$ are equivalent.}\label{Fig:echiv}
\end{figure}
The graphical proofs of these properties are given in Figure \ref{Fig:echiv}
, where for simplicity we use the notation $\chi_{1}:=\chi_{\mathbf{R},X}$, $
\chi_{2}:=\chi_{\mathbf{R},\mathbf{R}}$ and
$\chi_{3}:=\chi_{\mathbf{R},\mathbf{T}}$. In the first frame we
prove that $X$ is a right $\mathbf{R}$-module. The first and the
fourth equalities follow by the fact that the braiding $\chi$ is
a natural transformation. The second
identity is trivial, while for the third one we used that $(X,x^{0})$ is an
$\mathbf{R}^{\mathfrak{r}}$-module. In the second frame we show that the
module structures of $X$ satisfy condition (\ref{ec:bimod}). The
first relation in that frame is obvious. The second one follows by
(\ref{eq:T-R_bim}) and the last one is a consequence of naturality
of the braiding.
Obviously, $F^{-1}:\mathbf{T}$-$\mathrm{Mod}$-$\mathbf{R}
{}\rightarrow {}_{T}\mathcal{C}_{R}$ is given by $F^{-1}(X,\mu
_{X}^{l},\mu _{X}^{r}):=(X,\mu _{X}^{l},\mu _{X}^{r}\circ
\chi_{\mathbf{R},X})$.

In view of this example, from now on, we shall always regard
an object in $_T\mathcal{C}_R$ as a $\bf T$-$\bf R$ bimodule.
\end{example}

\begin{proposition}\label{prop:RX-mod}
Let $(R,r,\rho)$ be a monad and $\mathfrak{r}:RR\to RR$ be a
\emph{BD}-law. For an $(R,R)$-bimodule $(X,x,x^0)$, also the
triple $(RX,Rx\circ \mathfrak{r}X,Rx^{0}\circ \mathfrak{r}X)$  is
an $R$-bimodule. This construction defines a lifting ${\widetilde
  R}:{}_{R}\mathcal{C}_R \rightarrow {}_{R}\mathcal{C}_{R}$ of $R$.
\end{proposition}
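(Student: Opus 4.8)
The plan is to verify the three required properties directly from the defining axioms, exploiting that $\mathfrak{r}$ is a \emph{BD}-law (hence satisfies the YB-equation and interacts compatibly with $r$ and $\rho$). First I would check that $(RX,\,Rx\circ\mathfrak{r}X)$ is an $R$-module: associativity amounts to the identity $Rx\circ\mathfrak{r}X\circ R(Rx\circ\mathfrak{r}X)=Rx\circ\mathfrak{r}X\circ rRX$, which follows by naturality of $\mathfrak{r}$, associativity of $x$, and the $r$-compatibility axiom $\mathfrak{r}\circ rR=Rr\circ\mathfrak{r}R\circ R\mathfrak{r}$ (written out in the Remark after Definition~\ref{de:distributive law}), while unitality uses $\mathfrak{r}\circ\rho R=R\rho$. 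The same computation with $x$ replaced by $x^0$ and $r$ by $r^{\mathfrak{r}}=r\circ\mathfrak{r}$ shows $(RX,\,Rx^0\circ\mathfrak{r}X)$ is an $R^{\mathfrak{r}}$-module; here one additionally invokes the YB-equation for $\mathfrak{r}$ to reorganise the triple braiding that appears.

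Next I would establish the bimodule compatibility~(\ref{eq:R-bimod}) for the triple $(RX,\,Rx\circ\mathfrak{r}X,\,Rx^0\circ\mathfrak{r}X)$, i.e.
\begin{equation*}
(Rx\circ\mathfrak{r}X)\circ R(Rx^0\circ\mathfrak{r}X)\circ\mathfrak{r}RX
=(Rx^0\circ\mathfrak{r}X)\circ R^{\mathfrak{r}}(Rx\circ\mathfrak{r}X).
\end{equation*}
This is where I expect the main work: one pushes all occurrences of $\mathfrak{r}$ to one side using naturality, applies the YB-equation to swap the order of two adjacent $\mathfrak{r}$'s past an $R$, and then uses the original bimodule relation $x\circ Rx^0\circ\mathfrak{r}X=x^0\circ R^{\mathfrak{r}}x$ for $(X,x,x^0)$ once, inside $R(-)$, to close the computation. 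I would carry this out diagrammatically in the graphical calculus set up in Section~\ref{sec:bimod}, since the YB-equation is visually transparent there; the bookkeeping of which strand carries $X$ is routine but is the step most likely to hide a sign or ordering slip.

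Finally, to see that this assembles into a functor $\widetilde R:{}_R\mathcal{C}_R\to{}_R\mathcal{C}_R$ lifting $R$, I would note that on a bimodule morphism $f:(X,x,x^0)\to(Y,y,y^0)$ the underlying morphism $Rf$ is again a morphism of both module structures: $R f\circ(Rx\circ\mathfrak{r}X)=(Ry\circ\mathfrak{r}Y)\circ RRf$ follows from $f$ being an $R$-module map together with naturality of $\mathfrak{r}$, and similarly on the $R^{\mathfrak{r}}$-side. Functoriality ($\widetilde R(g\circ f)=\widetilde Rg\circ\widetilde Rf$ and $\widetilde R\,\mathrm{id}=\mathrm{id}$) is then inherited from functoriality of $R$, and the square with the forgetful functor $U:{}_R\mathcal{C}_R\to\mathcal{C}$ commutes on the nose by construction, so $\widetilde R$ is indeed a lifting of $R$. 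The only genuine obstacle is the compatibility identity~(\ref{eq:R-bimod}) above; everything else is a mechanical unwinding of the monad and distributive-law axioms.
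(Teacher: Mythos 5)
Your plan is correct and follows essentially the same route as the paper's proof: the paper verifies (graphically, in Figure~\ref{fig:R_module}) exactly the three facts you list, using naturality of $\mathfrak{r}$, the distributive-law compatibilities with $r$ and $\rho$, the \emph{YB}-equation for the $R^{\mathfrak{r}}$-action and for the commutation of the two actions, and the bimodule relation (\ref{eq:R-bimod}) for $(X,x,x^0)$ applied once inside $R(-)$. Your additional remarks on functoriality on morphisms and on commuting with the forgetful functor are the routine points the paper leaves implicit, and your computations go through as sketched.
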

\begin{proof}
Let us denote the actions of $R$ and $R^{\mathfrak{r}}$ on $RX$ by
$y$ and $y^{0}$, respectively. In the first frame of
Figure~\ref{fig:R_module} we prove that $RX$ is an $R$-module. The
first equality follows by the definition of $y$. The second
equality is a consequence of the definition of distributive laws,
cf. Figure~\ref{fig:DLaw}, while the third one results by the
definition of $R$-modules. For the last relation we use the
definition of $y$.
\begin{figure}[h]
\begin{center}
\includegraphics[scale=.78]{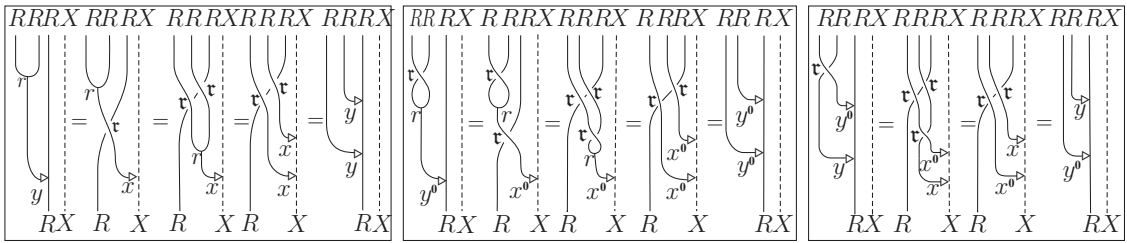}
\end{center}
\caption{$RX$ is an $(R,R)$-bimodule.} \label{fig:R_module}
\end{figure}
In the second frame we prove that $RX$ is an
$R^{\mathfrak{r}}$-module too. We proceed as above using, in
addition, \emph{YB}-equation. The fact that $y$ and $y^{0}$
commute, i.e. they satisfy relation (\ref{eq:R-bimod}), is proved
in the third frame, where for the first relation we use the
definition of the actions and \emph{YB}-equation. Since $x$ and
$x^{0}$ commute, we obtain the second relation. We conclude the
proof of the proposition by applying the definition of $y$ and
$y^{0}$.
\end{proof}

\begin{remark}
If $\mathfrak{r}:RR\to RR$ is an invertible \emph{BD}-law,
Proposition \ref{prop:RX-mod} can be applied to the
\emph{BD}-law $\mathfrak{r}^{-1}: R^{\mathfrak{r}} R^{\mathfrak{r}} \to
R^{\mathfrak{r}} R^{\mathfrak{r}}$ and the $R^{\mathfrak{r}}$-bimodule
$(X,x^0,x)$. Using the isomorphism ${}_{R}\mathcal{C}_R\cong
{}_{R^{\mathfrak{r}}}\mathcal{C}_{R^{\mathfrak{r}}}$ in Remark \ref{rem:R_op},
we obtain a functor ${\widetilde R}^0:
{}_{R}\mathcal{C}_R \rightarrow {}_{R}\mathcal{C}_{R}$.
\end{remark}

\begin{example}
For a monad $R={\bf R}\otimes (-)$, induced by an algebra ${\bf
R}$ as in Example \ref{ex:k_alg}, the functor ${\widetilde R}:{\bf
R}$-$\mathrm{Mod}$-${\bf R} \to {\bf R}$-$\mathrm{Mod}$-${\bf R}$,
maps an ${\bf R}$-bimodule $(X,\mu_X^l,\mu_X^r)$ to ${\bf
R}\otimes X$, with ${\bf R}$-actions
\begin{align}
{\mu}_{\mathbf{R}\otimes X}^l&:=({\bf R}\otimes
\mu_X^l)\circ\alpha_{{\bf R,R},X}\circ(\chi_{\bf R,R}\otimes
X)\circ\alpha_{{\bf R,R},X}^{-1},\qquad {\mu}_{\mathbf{R}\otimes
X}^{r}:=({\bf R}\otimes \mu_X^r)\circ\alpha_{{\bf R},X,{\bf R}}.
\end{align}
\end{example}

\begin{remark} \label{rem:op}
Note that, for a \bpm $\mathcal{S}:=(\mathcal{C},T,R,\mathfrak{l},
\mathfrak{t}, \mathfrak{r})$ and  any \lb monad morphism
$\varphi:R \to T$ is \lb also for the \bpm
$\mathcal{S}^{0}:=(\mathcal{C},
T^{\mathfrak{t}^{-1}},R^{\mathfrak{r}},
\mathfrak{l},\mathfrak{t}^{-1}, \mathfrak{r}^{-1})$.
\end{remark}

\begin{proposition}\label{prop:i-o_mod}
Consider a
\bpm $\mathcal{S}:=(\mathcal{C},T,R ,\mathfrak{l},\mathfrak{t},\mathfrak{r})$
and a
\lb monad morphism $\varphi:R \to T$. For an $R$-module $(X,x)$ and an
$R^{\mathfrak r}$-module $(Y,y^0)$, the following assertions hold.
\begin{enumerate}
\item The triple $(TX,x_{i},x_{i}^0)$ defines an $(R,R)$-bimodule,
where
\begin{equation}
x_{i}:=Tx\circ \mathfrak{l}X\qquad \text{and\qquad }x_{i}^0:=
tX\circ T\varphi X\circ \mathfrak{l}X.  \label{ec: T_i}
\end{equation}
It is called the \emph{inside structure} on $TX$. This
construction defines a functor ${}_{R}\mathcal{C}\rightarrow
{}_{R}\mathcal{C}_{R}$. \item The triple $(TY,y_{o},y_{o}^0)$
defines an $(R,R)$-bimodule, where
\begin{equation}\label{ec: T_o}
y_{o}:=tY\circ \varphi TY\text{ \qquad and\qquad }
y_{o}^0:=Ty^0\circ \mathfrak{l}Y.
\end{equation}
It is called the \emph{outside structure} on $TY$. This
construction defines a functor
${}_{R^{\mathfrak{r}}}\mathcal{C}\rightarrow
{}_{R}\mathcal{C}_{R}$.
\end{enumerate}
\end{proposition}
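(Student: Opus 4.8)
The plan is to verify each of the two items by a direct diagrammatic computation, following the same pattern used in Propositions~\ref{prop:RX-mod} and~\ref{prop:i-o_mod}-type arguments: namely, check three things for each triple --- that the first component is an $R$-module, that the second component is an $R^{\mathfrak{r}}$-module, and that these two structures commute in the sense of \eqref{eq:R-bimod} --- and then check functoriality on morphisms. For item (1), I would first record that $x_i = Tx\circ \mathfrak{l}X$ is an $R$-action on $TX$ because $\mathfrak{l}:RT\to TR$ is a distributive law: associativity of $x_i$ follows from the hexagon relation $\mathfrak{l}\circ rT = Tr\circ \mathfrak{l}R\circ R\mathfrak{l}$ together with associativity of $x$, and unitality from $\mathfrak{l}\circ\rho T = T\rho$ together with unitality of $x$. (This is essentially the standard fact that $\mathfrak{l}$ lifts $R$ to a monad on $\ _T\mathcal{C}$, here read on the underlying $R$-module.) Next I would check that $x_i^0 = tX\circ T\varphi X\circ \mathfrak{l}X$ is an $R^{\mathfrak{r}}$-action: associativity here requires the deformed multiplication $r^{\mathfrak{r}} = r\circ\mathfrak{r}$, so the computation will invoke condition (A~1) (relating $\mathfrak{l}$, $\mathfrak{r}$ and $\mathfrak{t}$), the distributive-law axioms for $\mathfrak{l}$, the monad-morphism property of $\varphi$ (to slide $\varphi$ past $t$), and associativity of $t$; unitality of $x_i^0$ uses $\varphi\circ\rho = \tau$ and unitality of $t$.

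The crux of item (1) is verifying the bimodule compatibility \eqref{eq:R-bimod}, i.e. $x_i\circ Rx_i^0\circ \mathfrak{r}(TX) = x_i^0\circ R^{\mathfrak{r}}x_i$. I would expand both sides into strings built from $\mathfrak{l}$, $\mathfrak{r}$, $\mathfrak{t}$, $\varphi$, $t$, $x$ and push everything toward a common normal form. The braid-preservation conditions (A~3) and (A~4) on $\varphi$ are exactly what is needed to commute $\varphi$ past the relevant occurrences of $\mathfrak{t}$ and $\mathfrak{r}$; condition (A~1) handles the interaction of $\mathfrak{l}$ with $\mathfrak{r}$; the \emph{YB}-equation for $\mathfrak{t}$ (and possibly for $\mathfrak{r}$) is used to rearrange triple crossings; and naturality of $\mathfrak{l}$ lets $x$ slide through. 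I expect this to be the main obstacle: it is the longest bookkeeping step and the one where the hypotheses (A~1), (A~3), (A~4) and the \emph{YB}-equations all come into play simultaneously, so the diagram must be manipulated carefully. Functoriality is then routine: for an $R$-module map $f:(X,x)\to(X',x')$ one checks that $Tf$ is a morphism of $(R,R)$-bimodules using naturality of $\mathfrak{l}$ (for the $x_i$-part) and naturality of $\mathfrak{l}$ together with naturality of $\varphi$ and of $t$ (for the $x_i^0$-part); compatibility with composition and identities is immediate since $T$ is a functor.

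Item (2) is then obtained by the duality already built into the excerpt rather than by redoing the work. By Remark~\ref{rem:op}, $\varphi$ is also braid preserving for the \bpm $\mathcal{S}^0 = (\mathcal{C}, T^{\mathfrak{t}^{-1}}, R^{\mathfrak{r}}, \mathfrak{l}, \mathfrak{t}^{-1}, \mathfrak{r}^{-1})$, and by Remark~\ref{rem:R_op} the category ${}_R\mathcal{C}_R$ is isomorphic to ${}_{R^{\mathfrak{r}}}\mathcal{C}_{R^{\mathfrak{r}}}$ via $(X,x,x^0)\mapsto(X,x^0,x)$. Applying the already-proved item (1) to $\mathcal{S}^0$ and to the $R^{\mathfrak{r}}$-module $(Y,y^0)$ produces, on $T^{\mathfrak{t}^{-1}}Y = TY$, an $(R^{\mathfrak{r}},R^{\mathfrak{r}})$-bimodule whose inside actions unwind --- using $\mathfrak{t}^{-1}$, $\mathfrak{r}^{-1}$ in place of $\mathfrak{t}$, $\mathfrak{r}$ --- to exactly the pair $(y_o, y_o^0)$ of \eqref{ec: T_o} after transporting back along the isomorphism of Remark~\ref{rem:R_op}. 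So item (2) follows formally, and the accompanying functor ${}_{R^{\mathfrak{r}}}\mathcal{C}\to{}_R\mathcal{C}_R$ is the composite of the functor from item (1) for $\mathcal{S}^0$ with that isomorphism. If a reader prefers, the same diagrammatic checks as for item (1) can be carried out directly for $(TY, y_o, y_o^0)$, using this time that $y_o = tY\circ\varphi TY$ is a $T$-action restricted along $\varphi$ and that $y_o^0 = Ty^0\circ\mathfrak{l}Y$ is an $R^{\mathfrak{r}}$-action by the distributive-law axioms; the compatibility again reduces to (A~1), (A~3), (A~4) and the \emph{YB}-equations.
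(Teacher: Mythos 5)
Your proposal is correct and follows essentially the paper's own route: the paper verifies part (1) by a direct computation left to the reader, and deduces part (2) from (1) by replacing $\mathcal{S}$ with $\mathcal{S}^0$ of Remark \ref{rem:op} and applying (1) to the $R^{\mathfrak{r}}$-module $(Y,y^0)$ (transporting along ${}_R\mathcal{C}_R\cong{}_{R^{\mathfrak{r}}}\mathcal{C}_{R^{\mathfrak{r}}}$), exactly as you do. One cosmetic remark: a distributive law $\mathfrak{l}:RT\to TR$ lifts $T$ to a monad on ${}_R\mathcal{C}$ rather than $R$ to ${}_T\mathcal{C}$, but the verification you actually spell out for $x_i$ (hexagon and unit axioms of $\mathfrak{l}$ plus the module axioms of $x$) is the right one.
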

\begin{proof}
Claim (1) is verified by straightforward computation, which is left to the
reader. Part (2) is obtained from (1) by replacing the
\bpm $\mathcal{S}$ with $\mathcal{S}^0$ in Remark \ref{rem:op},
and replacing the $R$-module $(X,x)$ with the $R^{\mathfrak{r}}$-module
$(Y,y^0)$.
\end{proof}

\begin{example}
Consider the \lb monad morphism coming from an algebra
homomorphism $\boldsymbol{\varphi}: {\bf R}\to {\bf T}$ in a braided monoidal
category, as in
Example \ref{ex:alg_sep}. For a left ${\bf R}$-module $(X,\mu)$,
the inner actions on ${\bf T}\otimes X$ in Proposition
\ref{prop:i-o_mod} (1) come out as
\begin{align*}
\mu_{i}^l &:=(\mathbf{T}\otimes \mu)\circ \alpha
_{\mathbf{T},\mathbf{R},X}\circ
(\chi _{\mathbf{R},\mathbf{T}}\otimes X)\circ \alpha _{\mathbf{R},\mathbf{T}
,X}^{-1} \\
\mu_{i}^r &:=\left[ \mathbf{t}\circ (\mathbf{T}\otimes \boldsymbol{\varphi}
)\otimes X\right] \circ \alpha _{\mathbf{T,R},X}^{-1}\circ (\mathbf{T}
\otimes \chi _{\mathbf{R},\mathbf{X}}^{-1})\circ \alpha
_{\mathbf{T,X,R}}.
\end{align*}
For a right ${\bf R}$-module $(Y,\nu)$, the outer actions on ${\bf
T}\otimes Y$ in Proposition \ref{prop:i-o_mod} (2) come out as
\begin{align*}
\nu _{o}^{l} &:=\left[ \mathbf{t}\circ (\boldsymbol{\varphi}\otimes \mathbf{
T})\otimes Y\right] \circ \alpha _{\mathbf{R},\mathbf{T},Y}^{-1},
\qquad
\nu _{o}^{r} =\left( \mathbf{T}\otimes \nu \right) \circ \alpha _{\mathbf{
T,Y,R}}.
\end{align*}
\end{example}

Composition of the forgetful functor ${}_{R}\mathcal{C}_R \to
{}_{R}\mathcal{C}$ with the functor in Proposition \ref{prop:i-o_mod} (1)
yields a functor $T_i: {}_{R}\mathcal{C}_R \rightarrow
{}_{R}\mathcal{C}_{R}$.
Symmetrically, composition of the forgetful functor ${}_{R}\mathcal{C}_R \to
{}_{R^{\mathfrak{r}}} \mathcal{C}$ with the functor in Proposition
  \ref{prop:i-o_mod} (2)
yields a functor $T_o: {}_{R}\mathcal{C}_R \rightarrow {}_{R}\mathcal{C}_{R}$.
In the following proposition some natural transformations between various
  composites of these endofunctors on ${}_{R}\mathcal{C}_R$, and the functors
  ${\widetilde R}$ and ${\widetilde R}^0$ in Proposition
  \ref{prop:RX-mod}, are studied.
\begin{proposition}\label{prop:T_i,T_o}
Let $\mathcal{S}:=(\mathcal{C},T,R,\mathfrak{l},\mathfrak{t},
\mathfrak{r})$ be a
\bpm and $\varphi:R \to T$ be a
\lb monad morphism. Consider the above endofunctors
${\widetilde R}$, ${\widetilde R}^0$, $T_i$ and $T_o$ on ${}_{R}\mathcal{C}_R$.
Then the following hold.
\begin{enumerate}
\item The mappings $\mathrm{Ob}({}_{R}\mathcal{C}_R) \to
    \mathrm{Mor}({}_{R}\mathcal{C}_R)$, $(X,x,x^0)\mapsto x_i$ and
    $(X,x,x^0)\mapsto
    x_i^0 $, defined in terms of the inner actions in Proposition
    \ref{prop:i-o_mod} (2),
    determine natural transformations ${\widetilde R} T_o\to T_o$.
\item The mappings $\mathrm{Ob}({}_{R}\mathcal{C}_R) \to
    \mathrm{Mor}({}_{R}\mathcal{C}_R)$, $(X,x,x^0)\mapsto x_o$ and
    $(X,x,x^0)\mapsto
    x_o^0$, defined in terms of the outer actions in Proposition
    \ref{prop:i-o_mod} (3),
    determine natural transformations ${\widetilde R}^0 T_i\to T_i$.
\item The $BD$-law $\mathfrak{t}$ defines a natural transformation
$T_{i}T_{o}\to T_{o}T_{i}$.
\item The $BD$-law $\mathfrak{l}$ defines a natural transformations
${\widetilde R} T_{o}\to T_{o}{\widetilde R}$ and
${\widetilde R}^0 T_{o}\to T_{o}{\widetilde R}^0$.
\end{enumerate}
\end{proposition}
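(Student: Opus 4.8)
The plan is to prove the four statements by one uniform method. In each of them, a natural transformation of endofunctors of $\mathcal{C}$ (or a \emph{BD}-law), evaluated on the underlying object of an $(R,R)$-bimodule $M=(X,x,x^{0})$, is proposed as a morphism in ${}_R\mathcal{C}_R$ between the relevant composites of ${\widetilde R}$, ${\widetilde R}^{0}$, $T_i$, $T_o$ at $M$: the morphisms $x_i=Tx\circ\mathfrak{l}X$ and $x_i^{0}=tX\circ T\varphi X\circ\mathfrak{l}X$ of (\ref{ec: T_i}) in~(1), their outer counterparts $x_o$, $x_o^{0}$ of (\ref{ec: T_o}) in~(2), the morphism $\mathfrak{t}X$ in~(3), and the morphism $\mathfrak{l}X$ in~(4). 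Thus for each claim there are two things to do: first (a) check that the proposed morphism is $R$-linear and $R^{\mathfrak{r}}$-linear for the source and target bimodule structures, which one reads off from Propositions~\ref{prop:RX-mod} and~\ref{prop:i-o_mod}; then (b) check naturality in $M$. Step~(b) is routine in every case: since $T_i$, $T_o$ act on morphisms by $f\mapsto Tf$ and ${\widetilde R}$, ${\widetilde R}^{0}$ by $f\mapsto Rf$, the naturality square closes after a single use of naturality of the transformation in question (that of $\mathfrak{l}$, or $\mathfrak{t}$, or $\mathfrak{r}$) together with the hypothesis that $f$ is $R$- and $R^{\mathfrak{r}}$-linear. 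So all the content lies in step~(a), a graphical computation in the style of the earlier proofs.

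First I would settle~(1), from which~(2) will follow by symmetry. Writing $T_o(X,x,x^{0})=(TX,\,tX\circ\varphi TX,\,Tx^{0}\circ\mathfrak{l}X)$ and obtaining ${\widetilde R}T_o(X,x,x^{0})$ by postcomposing $R$ of each of these actions with $\mathfrak{r}(TX)$ (Proposition~\ref{prop:RX-mod}), the $R$-linearity of $x_i$ becomes an identity that one verifies by rewriting both sides to a common form, using naturality of $\varphi$ to slide it past $T$ and past $\mathfrak{l}$, the compatibility of $\mathfrak{l}$ with the multiplications $r$ and $t$ (Figure~\ref{fig:DLaw}) and naturality of $t$, and then condition~(A~3), which encodes how $\varphi$ may be pulled through $\mathfrak{r}$ and $\mathfrak{l}$; for $x_i^{0}$ the argument is the same with $r$ replaced by $t$ and~(A~3) by~(A~4). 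The $R^{\mathfrak{r}}$-linearity of $x_i$ (resp.\ $x_i^{0}$) is the slightly more delicate check, because $x_i$ is built from the $R$-action $x$ of $M$ while $T_o$ is built from its $R^{\mathfrak{r}}$-action $x^{0}$: here one must, in addition, invoke the bimodule compatibility (\ref{eq:R-bimod}) for $(X,x,x^{0})$, together with the distributive-law axioms for $\mathfrak{l}$ over the deformed monad $R^{\mathfrak{r}}$ and the \emph{YB}-equation for $\mathfrak{r}$.

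Claim~(2) I would then deduce from~(1) at no cost: by Remark~\ref{rem:op} the morphism $\varphi$ is \lb also for the \bpm $\mathcal{S}^{0}=(\mathcal{C},T^{\mathfrak{t}^{-1}},R^{\mathfrak{r}},\mathfrak{l},\mathfrak{t}^{-1},\mathfrak{r}^{-1})$, and under the isomorphism ${}_R\mathcal{C}_R\cong{}_{R^{\mathfrak{r}}}\mathcal{C}_{R^{\mathfrak{r}}}$ of Remark~\ref{rem:R_op} the data ${\widetilde R}$, $T_i$ and the inner actions for $\mathcal{S}^{0}$ correspond to ${\widetilde R}^{0}$, $T_o$ and the outer actions for $\mathcal{S}$; applying~(1) to $\mathcal{S}^{0}$ and transporting along the isomorphism yields~(2). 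The same device interchanges the two halves of~(4), so it suffices to treat the first. For~(4), with proposed morphism $\mathfrak{l}X\colon{\widetilde R}T_o(X,x,x^{0})\to T_o{\widetilde R}(X,x,x^{0})$, the $R$-linearity follows from the compatibility of $\mathfrak{l}$ with $t$ (Figure~\ref{fig:DLaw}), naturality of $\varphi$ and condition~(A~3), both sides reducing to $tRX\circ\varphi TRX\circ R\mathfrak{l}X$; the $R^{\mathfrak{r}}$-linearity reduces to condition~(A~1), the hexagon relating $\mathfrak{l}$ and $\mathfrak{r}$ (for the ${\widetilde R}^{0}$-version one uses its $\mathfrak{r}^{-1}$-deformation). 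For~(3), with proposed morphism $\mathfrak{t}X\colon TTX\to TTX$, one checks that $T_iT_o(X,x,x^{0})$ and $T_oT_i(X,x,x^{0})$ have the same underlying object $TTX$, writes out their two pairs of actions from (\ref{ec: T_i}) and (\ref{ec: T_o}), and verifies that $\mathfrak{t}X$ intertwines each pair; the computation uses the distributive-law axioms for $\mathfrak{t}$, the fact that $\varphi$ is a morphism of monads and condition~(A~4) for the steps where a $\varphi$ must be moved across $\mathfrak{t}$ and $\mathfrak{l}$, and, for the $R^{\mathfrak{r}}$-linearity, in addition the \emph{YB}-equation for $\mathfrak{t}$ and condition~(A~2).

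The main obstacle is bookkeeping rather than ideas. Each of ${\widetilde R}$, ${\widetilde R}^{0}$, $T_i$, $T_o$ twists \emph{both} of its module structures --- by $\mathfrak{r}$, by $\mathfrak{r}^{-1}$, or by $\mathfrak{l}$ and $\varphi$ --- so a twofold composite such as ${\widetilde R}T_o$ or $T_iT_o$ carries several nested braidings, and one has to order the rewrites so that each braiding meets the distributive-law, \emph{BD}-, or (A~1)--(A~4)-type identity that absorbs it before the computation stalls. Conceptually the only non-formal inputs are the \emph{YB}-equation and conditions (A~1)--(A~4); everything else is naturality, the monad axioms, and the bimodule compatibility, so the real work is to recognise which of these is forced at each step. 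The one structural subtlety is that in~(1) and~(2) the two module structures of source and target are built from \emph{different} actions of $M$, which is exactly why the ``off-diagonal'' linearity check --- the $R^{\mathfrak{r}}$-one in~(1), the $R$-one in~(2) --- is the place where the compatibility (\ref{eq:R-bimod}) enters. As in the preceding propositions, the non-obvious steps would be displayed in figures and the routine rewrites left to the reader.
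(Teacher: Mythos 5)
Your overall strategy is the paper's: parts (1), (3) and (4) are verified by direct (graphical) computations checking that the candidate morphism intertwines both the $R$- and the $R^{\mathfrak{r}}$-actions of source and target, naturality being routine, while part (2) is deduced from part (1) by passing to the braided pair $\mathcal{S}^0$ of Remark \ref{rem:op} and the isomorphism ${}_R\mathcal{C}_R\cong{}_{R^{\mathfrak{r}}}\mathcal{C}_{R^{\mathfrak{r}}}$ of Remark \ref{rem:R_op}. The identities you cite for (3) ($\mathfrak{l}$ a distributive law, (A~4) for the $R$-linearity; additionally (A~2) for the $R^{\mathfrak{r}}$-linearity) match the paper's account, and your description of the bimodule structures involved in (1) is the correct one.

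There is, however, one step that fails as stated: the claim that ``the same device interchanges the two halves of (4)''. The passage $\mathcal{S}\mapsto\mathcal{S}^0$ does not only exchange ${\widetilde R}\leftrightarrow{\widetilde R}^0$; it also exchanges $T_o\leftrightarrow T_i$, since the inner structure for $\mathcal{S}^0$ is the outer structure for $\mathcal{S}$ (this is exactly why the device proves (2) from (1), and why $\overline{T}_i$ arises from $\overline{T}_o$ in Proposition \ref{pr:T_i bar}). Hence applying it to the first half of (4), ${\widetilde R}T_o\to T_o{\widetilde R}$, yields a transformation ${\widetilde R}^0T_i\to T_i{\widetilde R}^0$ — a statement not contained in (4) — and not the asserted second half ${\widetilde R}^0T_o\to T_o{\widetilde R}^0$. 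That second half cannot be discarded either: it is precisely what is used later (e.g.\ the arrow $\mathfrak{l}T_iX:{\widetilde R}^0T_oT_iX\to T_o{\widetilde R}^0T_iX$ in the proof of Theorem \ref{te:t' and t bar} (2)). The slip is repairable: the ${\widetilde R}^0$-version admits the same direct check as the first half, with the bimodule structure of ${\widetilde R}^0T_oX$ written via $\mathfrak{r}^{-1}$ and the correspondingly twisted form of (A~1) — as your own parenthetical remark suggests — but as written the reduction by symmetry is not valid and the second transformation in (4) needs its own verification (or, alternatively, one first proves ${\widetilde R}T_i\to T_i{\widetilde R}$ directly and then transports it by the $\mathcal{S}^0$-symmetry, which is the same amount of work).
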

\begin{proof} Note that part (2) is obtained from part (1)
by replacing the
\bpm
$\mathcal{S}$ with $\mathcal{S}^0$ in
Remark \ref{rem:op}, and using the isomorphism ${}_{R}\mathcal{C}_R\cong
{}_{R^{\mathfrak{r}}}\mathcal{C}_{R^{\mathfrak{r}}}$.

Claims (1), (3) and (4) are proven by straightforward but somewhat
lengthy computations. We illustrate the main steps on the example
of part (3).
We use the graphical representation of morphisms in a category again.
We denote the morphisms defining the $(R,R)$-bimodule structure of
$ T_{i}T_{o}X$ by $x_{io}$ and $x_{io}^{0}$, respectively.
Similarly, for the actions of $R$ and $R^{\mathfrak{r}}$ on
$T_{o}T_{i}X$ we use the notation $x_{oi}$ and $ x_{oi}^{0}$. The
diagrammatic representation of these morphisms is given in the
first frame of Figure~\ref{fig:tX_morphism}. In the second frame
we prove that $\mathfrak{t}X$ is a morphism of $R$-modules. Note
that for the first equality we used that $\mathfrak{l}$ is a
distributive law and $ \varphi $ satisfies (A~4). For the
graphical proof of the fact $\mathfrak{t} X $ is a morphism of
$R^{\mathfrak{r}}$-modules see the third frame of Figure~\ref
{fig:tX_morphism}. Note that, besides the properties of
$\mathfrak{l}$ and $ \varphi$ that we already used, we also need
condition (A~2) in Definition~\ref{de:septuple}.
\begin{figure}[h]
\begin{center}
\includegraphics[scale=.78]{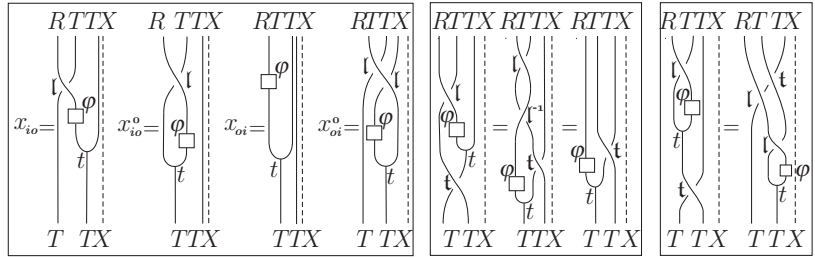}
\end{center}
\caption{$\mathfrak{t}X$ is a morphism of $(R,R)$-bimodules.}
\label{fig:tX_morphism}
\end{figure}
\end{proof}

\begin{proposition}\label{pr:T bar}
For a
\bpm $\mathcal{S}:=(\mathcal{C},T,R, \mathfrak{l},\mathfrak{t},
\mathfrak{r})$ and a
\lb monad morphism $\varphi:R \to T$, consider an $(R,R)$-bimodule
$(X,x,x^{0})$ and a $(T,R)$-bimodule $(Y,y,y^{0})$. Then the following
assertions hold.
\begin{enumerate}
\item The actions $x^{0}$ and $y^{0}$ induce $R^{\mathfrak{r}}$-module
  structures respectively on $\varphi _{\ast }(X,x)$ and $\varphi ^{\ast
  }(Y,y)$.

\item With respect to the above actions, $\varphi _{\ast }$ in Proposition
\ref{pr:adjoint functors} and $\varphi ^{\ast }$ in Proposition \ref{pr:phi_*}
can be regarded as functors $\varphi _{\ast }:{}_{R}\mathcal{C}
_{R}\rightarrow {}_{T}\mathcal{C}_R$ and $\varphi ^{\ast }:{}_{T}
\mathcal{C}_R \rightarrow {}_{R}\mathcal{C}_{R}$.

\item The functors constructed in part (2) define a pair of adjoint functors
$(\varphi _{\ast },\varphi ^{\ast })$.
\end{enumerate}
\end{proposition}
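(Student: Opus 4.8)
The plan is to obtain the bimodule-level adjunction by enriching the module-level adjunction $(\varphi_{\ast},\varphi^{\ast})$ of Proposition~\ref{pr:adjoint functors} with a suitable $R^{\mathfrak{r}}$-action on each of $\varphi^{\ast}(Y,y)$ and $\varphi_{\ast}(X,x)$, and then checking step by step that everything in sight --- the actions, the unit and counit, and the images of bimodule morphisms --- is compatible with these extra actions. For part~(1), on $\varphi^{\ast}(Y,y)$ I would simply keep the action $y^{0}$: since $\varphi$ is a morphism of monads, $(Y,y\circ\varphi Y)$ is an $R$-module by Proposition~\ref{pr:phi_*}, and $(Y,y^{0})$ is an $R^{\mathfrak{r}}$-module by hypothesis, so the only thing to verify is the commutation relation~(\ref{eq:R-bimod}) for the pair $(y\circ\varphi Y,\,y^{0})$. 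Starting from $(y\circ\varphi Y)\circ Ry^{0}\circ\mathfrak{r}Y$ one moves $\varphi$ past $Ry^{0}$ by naturality, rewrites $\varphi RY\circ\mathfrak{r}Y=\mathfrak{l}Y\circ R\varphi Y$ using condition~(A~3), and then applies the $(T,R)$-compatibility~(\ref{eq:T-R_bim}) of $(Y,y,y^{0})$ in the form $y\circ Ty^{0}\circ\mathfrak{l}Y=y^{0}\circ Ry$ to land on $y^{0}\circ R(y\circ\varphi Y)$, as required.

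The action on $\varphi_{\ast}(X,x)$ is where the real work lies, because $\varphi_{\ast}(X,x)=\mathrm{Coeq}(tX\circ T\varphi X,\,Tx)$ is given only up to a coequalizer. The candidate $R^{\mathfrak{r}}$-action on the free $T$-module $TX$ is the ``outer'' one, $\xi:=Tx^{0}\circ\mathfrak{l}X\colon RTX\to TX$; by Proposition~\ref{prop:i-o_mod}~(2) applied to the $R^{\mathfrak{r}}$-module $(X,x^{0})$ it is a module action, and $(TX,\,tX,\,\xi)$ is a $(T,R)$-bimodule, since~(\ref{eq:T-R_bim}) for it, namely $tX\circ T\xi\circ\mathfrak{l}TX=\xi\circ RtX$, follows immediately from the distributive-law axiom $\mathfrak{l}\circ Rt=tR\circ T\mathfrak{l}\circ\mathfrak{l}T$ and naturality of $t$. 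Since $R$ preserves coequalizers, $(R\varphi_{\ast}(X,x),R\pi X)=\mathrm{Coeq}(R(tX\circ T\varphi X),\,RTx)$, and the key step is to verify that $\pi X\circ\xi$ coequalizes this parallel pair. Setting $\sigma:=TRx^{0}\circ T\mathfrak{r}X\circ\mathfrak{l}RX\colon RTRX\to TRX$, I expect a short diagram chase --- using naturality of $\mathfrak{l}$, the distributive-law axiom for $\mathfrak{l}$ relative to $t$, naturality of $t$ and of $\varphi$, the $(R,R)$-bimodule relation~(\ref{eq:R-bimod}) for $(X,x,x^{0})$, and condition~(A~3) --- to establish both $\xi\circ RTx=Tx\circ\sigma$ and $\xi\circ R(tX\circ T\varphi X)=(tX\circ T\varphi X)\circ\sigma$. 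Because $\pi X\circ Tx=\pi X\circ tX\circ T\varphi X$, pre-composing these two identities with $\pi X$ shows that $\pi X\circ\xi$ indeed coequalizes the pair, so there is a unique $\overline{x^{0}}\colon R\varphi_{\ast}(X,x)\to\varphi_{\ast}(X,x)$ with $\overline{x^{0}}\circ R\pi X=\pi X\circ\xi$. That $(\varphi_{\ast}(X,x),\overline{x^{0}})$ is an $R^{\mathfrak{r}}$-module, and that together with the $T$-action $\overline{x}$ of Proposition~\ref{pr:adjoint functors} it satisfies~(\ref{eq:T-R_bim}), will then follow by the epimorphism argument from the proof of Proposition~\ref{le:coequalizer} --- using that $RR\pi X$ and $RT\pi X$ are epimorphisms, as $R$ and $T$ preserve coequalizers --- which reduces both assertions to the module and bimodule axioms for $(TX,tX,\xi)$ already checked.

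Part~(2) is then routine. On objects it is part~(1); on morphisms, $\varphi^{\ast}f=f$ is $R^{\mathfrak{r}}$-linear by the definition of a $(T,R)$-bimodule morphism and $R$-linear for the actions $y\circ\varphi Y$ because it is $T$-linear and $\varphi$ is natural, so $\varphi^{\ast}\colon{}_{T}\mathcal{C}_{R}\to{}_{R}\mathcal{C}_{R}$ is a functor; while for $\varphi_{\ast}$, given an $R$-bimodule morphism $f\colon(X,x,x^{0})\to(X_{1},x_{1},x_{1}^{0})$, one pre-composes the desired identity $\varphi_{\ast}f\circ\overline{x^{0}}=\overline{x_{1}^{0}}\circ R\varphi_{\ast}f$ with the epimorphism $R\pi X$ and uses $\varphi_{\ast}f\circ\pi X=\pi X_{1}\circ Tf$, $f\circ x^{0}=x_{1}^{0}\circ Rf$, naturality of $\mathfrak{l}$, and the defining relations of $\overline{x^{0}}$, $\overline{x_{1}^{0}}$. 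Preservation of identities and composites is inherited from Propositions~\ref{pr:phi_*} and~\ref{pr:adjoint functors}. For part~(3) the plan is to show that the natural bijection $\Phi_{X,Y}\colon f\mapsto f\circ\pi X\circ\tau X$ of Proposition~\ref{pr:adjoint functors} restricts to a bijection between the subsets of bimodule morphisms, whence naturality is inherited from the module-level statement. In one direction I would observe that the unit $\eta_{(X,x)}=\pi X\circ\tau X$ is itself $R^{\mathfrak{r}}$-linear from $(X,x^{0})$ to $(\varphi_{\ast}(X,x),\overline{x^{0}})$ --- pre-composing $\overline{x^{0}}\circ R\eta_{(X,x)}=\eta_{(X,x)}\circ x^{0}$ with $R\pi X$ reduces it, via $\overline{x^{0}}\circ R\pi X=\pi X\circ\xi$, the axiom $\mathfrak{l}\circ R\tau=\tau R$, and naturality of $\tau$, to an identity --- so that $\Phi_{X,Y}(f)=f\circ\eta_{(X,x)}$ is $R^{\mathfrak{r}}$-linear whenever $f$ is; conversely, using the identity $f\circ\pi X=y\circ T\Phi_{X,Y}(f)$ obtained inside the proof of Proposition~\ref{pr:adjoint functors}, naturality of $\mathfrak{l}$, relation~(\ref{eq:T-R_bim}) for $(Y,y,y^{0})$, and the fact that $R\pi X$ is an epimorphism, $R^{\mathfrak{r}}$-linearity of $\Phi_{X,Y}(f)$ forces that of $f$.

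The hard part will be the diagram chase in part~(1) identifying the single ``correction'' morphism $\sigma$ and verifying the two identities $\xi\circ RTx=Tx\circ\sigma$ and $\xi\circ R(tX\circ T\varphi X)=(tX\circ T\varphi X)\circ\sigma$: this is essentially the only place where the full strength of the \bpm axioms --- condition~(A~1) (equivalently, that $\mathfrak{l}$ is a distributive law $R^{\mathfrak{r}}T\to TR^{\mathfrak{r}}$), together with the compatibility~(A~3) of $\varphi$ with $\mathfrak{l}$ and $\mathfrak{r}$ --- is genuinely used; everything else is naturality, the coequalizer universal property, and the monad and distributive-law axioms.
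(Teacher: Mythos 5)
Your proposal is correct and follows essentially the same route as the paper: keep $y^{0}$ on $\varphi^{\ast}(Y,y)$ and verify the commutation via naturality of $\varphi$, condition (A~3) and \eqref{eq:T-R_bim}; equip $TX$ with the outer action $x_o^0=Tx^{0}\circ\mathfrak{l}X$ and lift it through the coequalizer by showing the parallel pair $(Tx,\,tX\circ T\varphi X)$ is $R^{\mathfrak{r}}$-linear with respect to the action $TRx^{0}\circ T\mathfrak{r}X\circ\mathfrak{l}RX$ on $TRX$; and prove (3) by checking that $\Phi_{X,Y}$ and its inverse preserve $R^{\mathfrak{r}}$-linearity. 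The only difference is presentational: the diagram chases you announce (and they do go through with exactly the ingredients you list) are obtained in the paper by citing Proposition \ref{prop:i-o_mod}, Proposition \ref{prop:T_i,T_o} (via $Tx=x_i\circ\mathfrak{l}^{-1}X$ and $tX\circ T\varphi X=x_i^{0}\circ\mathfrak{l}^{-1}X$) and Proposition \ref{le:coequalizer}, rather than redone by hand.
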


\begin{proof}
(1) and \ (2). Obviously, $y^{0}$ induces an $R^{\mathfrak{r}}$-action on
$\varphi^{\ast }(Y,y)=Y$. By construction, $\varphi ^{\ast }(Y,y)=(Y,y\circ
\varphi Y). $ We claim that $(Y,y\circ \varphi Y,y^{0})$ is an
$(R,R)$-bimodule. We have
to check that the module structures commute. Since $\varphi $ is a natural
morphism, condition (A~3) holds true and $y$ and $y^{0}$ commute, we get
\begin{equation*}
y\circ \varphi Y\circ Ry^{0}\circ \mathfrak{r}Y=y\circ Ty^{0}\circ
\varphi R^{0}Y\circ \mathfrak{r}Y=y\circ Ty^{0}\circ
\mathfrak{l}Y\circ R^{0}\varphi Y=y^{0}\circ R^{0}y\circ R^{0}\varphi
Y.
\end{equation*}
Hence $y\circ \varphi Y$ and $y^{0}$ commute too. Consequently, $\varphi
^{\ast }$ can be seen as a functor from $_{T}\mathcal{C}_R$ to ${}_{R}
\mathcal{C}_{R}.$

Recall from the proof of Proposition \ref{pr:adjoint functors} that the
$T$-module $(\varphi_\ast X,\overline{x})$ is the coequalizer of $(Tx,tX\circ
T\varphi X)$.
In terms of the inner actions in (\ref{ec: T_i}), the $T$-module morphisms
$Tx$ and $tX\circ T\varphi X$ are equal to $x_i\circ \mathfrak{l}^{-1} X$ and
$x_i^0\circ \mathfrak{l}^{-1} X$, respectively. Hence they are
$R^{\mathfrak{r}}$-module morphisms $T_o {\widetilde R} X\to T_o X$ by
Proposition \ref{prop:T_i,T_o} (1) and (4).
Thus $Tx$ and $tX\circ T\varphi X$ are morphisms of $(T,R)$-bimodules
from $(TRX,tRX,TRx^{0}\circ T\mathfrak{r}X\circ \mathfrak{l}RX)$ to
$(TX,tX,x_{o}^{0}).$ Hence, by Proposition \ref{le:coequalizer}, there is an
action $\overline{x}^0:R^{\mathfrak{r}}\varphi _{\ast }X\rightarrow \varphi
_{\ast }X$ such that
$(\varphi_\ast X,\overline{x},\overline{x}^{0})$ is a $(T,R)$-bimodule and
$\pi X$ is a morphism of $(T,R)$-bimodules form $(TX,tX,x_{o}^{0})$ to
$(\varphi_\ast X,\overline{x},\overline{x}^{0})$.

Let $f:(X,x,x^{0})\rightarrow (Z,z,z^{0})$ be a morphism of $(R,R)$-bimodules.
We already know that $\varphi _{\ast }f$ is a morphism of $T$-modules.
By Proposition \ref{prop:i-o_mod} (2), $T_of$ is an $R^{\mathfrak{r}}$-module
morphism. Hence so is $\varphi _{\ast }f$ by Proposition \ref{le:coequalizer}.
Thus $\varphi_{\ast }$ can be regarded as a functor from $_{R}\mathcal{C}_{R}$
to $_{T}\mathcal{C}_R$.

(3). Let $(X,x,x^{0})$ be an $(R,R)$-bimodule and let $
(Y,y,y^{0})$ be a $(T,R)$-bimodule. It is sufficient to show that
$\Phi_{X,Y}(f)$ and $\Theta _{X,Y}(g)$ are morphisms of
$R^{\mathfrak{r}}$-modules, for any $f:\varphi _{\ast }X\rightarrow Y$ in
$_{T}\mathcal{C}_R$ and any $
g:X\rightarrow \varphi ^{\ast }Y$ in $_{R}\mathcal{C}_{R}$ (for the
definition of $\Phi _{X,Y}$ and $\Theta _{X,Y}$ see the proof of Proposition
\ref{pr:adjoint functors}). The fact that $\Phi _{X,Y}(f)$ is a morphism of $
R^{\mathfrak{r}}$-modules is proved in the following computation:
\begin{equation*}
y^{0}\circ Rf\circ R\pi X \circ R^{0}\tau X=f\circ \overline{x}
^{0}\circ R\pi X\circ R\tau X=f\circ \pi X\circ Tx^{0}\circ
\mathfrak{l}X
\circ R\tau X=f\circ \pi X\circ \tau X\circ x^{0}.
\end{equation*}
Since $R\pi X$ is an epimorphism,
the relation meaning that $\Theta _{X,Y}(g)$ is a morphism of
$R^{\mathfrak{r}}$-modules follows from the computation below:
\begin{eqnarray*}
y^{0}\circ R^{\mathfrak{r}}\Theta _{X,Y}(g)\circ R^{\mathfrak{r}}\pi X&=&
y^{0}\circ R^{\mathfrak{r}}y\circ R^{\mathfrak{r}}Tg\overset{(A)}{=}
y\circ Ty^{0}\circ \mathfrak{l}Y\circ R^{\mathfrak{r}}Tg\overset{(B)}{=}
y\circ Tg\circ Tx^{0}\circ \mathfrak{l}X \\
&=&\Theta _{X,Y}(g)\circ \pi X\circ Tx^{0}\circ \mathfrak{l}X\overset{(C)}{=}
\Theta_{X,Y} (g)\circ \overline{x}^{0}\circ R^{\mathfrak{r}}\pi X.
\end{eqnarray*}
To get the equalities (A), (B) and (C) above we used the commutation
relation between the module structures of $Y$ and, respectively, that $T_og$
and $\pi $ are morphisms of $R^{\mathfrak{r}}$-modules.
\end{proof}

\begin{corollary}
For a
\bpm $\mathcal{S}:=(\mathcal{C},T,R,\mathfrak{l},\mathfrak{t},\mathfrak{r})$
and a
\lb monad morphism $\varphi:R \to T$,
consider the adjoint functors $(\varphi_\ast ,\varphi^\ast)$ in Proposition
\ref{pr:T bar}.

\begin{enumerate}
\item The unit of the adjunction $\sigma :\mathrm{Id}_{{}_{R}\mathcal{C}
_{R}}\rightarrow \varphi ^{\ast }\varphi _{\ast }$ is given, for any
  $(R,R)$-bimodule $(X,x,x^{0})$, by
\begin{equation}
\sigma X:=\pi X\circ \tau X.  \label{ec:unit adjunction}
\end{equation}

\item The counit of the adjunction $\xi :\varphi _{\ast }\varphi ^{\ast
}\rightarrow \mathrm{Id}_{{}_{T}\mathcal{C}_R}$ satisfies the following
relation for any $(T,R)$-bimodule $(Y,y,y^{0})$.
\begin{equation}
\xi Y\circ \pi \varphi ^{\ast }(Y,y,y^{0})=y.  \label{ec:counit
adjunction}
\end{equation}
\end{enumerate}
\end{corollary}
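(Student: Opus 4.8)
The plan is to avoid checking the triangle identities directly and instead read $\sigma$ and $\xi$ off the Hom-set isomorphism that produces the adjunction. Recall the general fact that whenever an adjunction $F\dashv G$ is presented by a natural isomorphism $\Psi_{X,Y}:\mathrm{Hom}(FX,Y)\to\mathrm{Hom}(X,GY)$, its unit at $X$ is $\Psi_{X,FX}(\mathrm{Id}_{FX})$ and its counit at $Y$ is $\Psi_{GY,Y}^{-1}(\mathrm{Id}_{GY})$. In the proof of Proposition \ref{pr:T bar}~(3) the adjunction $(\varphi_\ast,\varphi^\ast)$ between ${}_R\mathcal{C}_R$ and ${}_T\mathcal{C}_R$ is obtained precisely by observing that the mutually inverse natural isomorphisms $\Phi_{X,Y}$ and $\Theta_{X,Y}$ built in the proof of Proposition \ref{pr:adjoint functors} send morphisms of $(T,R)$-bimodules to morphisms of $(R,R)$-bimodules and conversely; so the same $\Phi_{X,Y}$, with inverse $\Theta_{X,Y}$, realises the bijection $\mathrm{Hom}_{{}_T\mathcal{C}_R}(\varphi_\ast X,Y)\cong\mathrm{Hom}_{{}_R\mathcal{C}_R}(X,\varphi^\ast Y)$, its naturality being inherited from the module level. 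It therefore suffices to evaluate $\Phi$ and $\Theta$ on the appropriate identity morphisms.

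First I would deal with the unit. For an $(R,R)$-bimodule $(X,x,x^{0})$, the identity morphism of $\varphi_\ast X$ is a morphism of $(T,R)$-bimodules, hence $\sigma X:=\Phi_{X,\varphi_\ast X}(\mathrm{Id}_{\varphi_\ast X})$ is a morphism of $(R,R)$-bimodules, and by the explicit formula $\Phi_{X,Y}(f)=f\circ\pi X\circ\tau X$ from the proof of Proposition \ref{pr:adjoint functors} it equals $\pi X\circ\tau X$. This is exactly (\ref{ec:unit adjunction}).

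Next I would deal with the counit. For a $(T,R)$-bimodule $(Y,y,y^{0})$, recall from the proof of Proposition \ref{pr:T bar} that $\varphi^\ast(Y,y,y^{0})=(Y,y\circ\varphi Y,y^{0})$, so its underlying object is $Y$ and $\mathrm{Id}_{\varphi^\ast Y}$ is a morphism of $(R,R)$-bimodules; hence $\xi Y:=\Theta_{\varphi^\ast Y,Y}(\mathrm{Id}_{\varphi^\ast Y})$ is a morphism of $(T,R)$-bimodules. By the defining property $\Theta_{X,Y}(g)\circ\pi X=y\circ Tg$ of $\Theta$, this $\xi Y$ is the unique $T$-module morphism satisfying $\xi Y\circ\pi\varphi^\ast(Y,y,y^{0})=y\circ T(\mathrm{Id}_{\varphi^\ast Y})=y$, which is (\ref{ec:counit adjunction}). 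The statement is phrased as a relation rather than an explicit formula because $\pi\varphi^\ast(Y,y,y^{0})$ is the coequalizing epimorphism onto $\varphi_\ast\varphi^\ast Y$ and therefore already determines $\xi Y$.

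The one point that genuinely requires care — and the one I would write out in full — is the compatibility already established in Proposition \ref{pr:T bar}~(3): the Hom-set bijections realising the bimodule adjunction are literally the restrictions of those realising the module-level adjunction of Proposition \ref{pr:adjoint functors}. Granting that, there is no real obstacle left, since the expressions for $\sigma$ and $\xi$ are then the universal ones and the two computations above are immediate. One could instead bypass the abstract unit/counit description and verify the triangle identities by hand for $\sigma X=\pi X\circ\tau X$ and for the $\xi$ characterised by (\ref{ec:counit adjunction}), but that merely re-runs the bijectivity calculations from the proof of Proposition \ref{pr:adjoint functors}, so the route above is preferable.
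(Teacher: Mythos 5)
Your proposal is correct and follows essentially the same route as the paper: the paper's proof simply sets $\sigma X:=\Phi_{X,\varphi_\ast(X,x,x^0)}(\mathrm{Id}_{\varphi_\ast(X,x,x^0)})$ and $\xi Y:=\Theta_{\varphi^\ast(Y,y,y^0),Y}(\mathrm{Id}_{\varphi^\ast(Y,y,y^0)})$, i.e.\ it reads the unit and counit off the hom-set bijection $(\Phi,\Theta)$ exactly as you do, with your extra remarks (the explicit formula for $\Phi$ and the defining relation $\Theta_{X,Y}(g)\circ\pi X=y\circ Tg$) merely spelling out what the paper leaves implicit.
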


\begin{proof}
By definition, $\sigma X:=\Phi _{X,\varphi _{\ast }(X,x,x^{0})}(\mathrm{Id}
_{\varphi _{\ast }(X,x,x^{0})})$ and $\xi Y=\Theta _{\varphi ^{\ast
  }(Y,y,y^{0}),Y}( \mathrm{Id}_{\varphi ^{\ast }(Y,y,y^{0})}).$
\end{proof}

\begin{example}
For the
\lb monad morphism,
coming from an algebra homomorphism
$\boldsymbol{\varphi}: {\bf R}\to {\bf T}$ in a braided monoidal category as
in Example \ref{ex:alg_sep},
${\bf T}$ is a ${\bf T}$-${\bf R}$ bimodule via the left regular ${\bf
  T}$-action and the right ${\bf R}$-action induced by
$\boldsymbol{\varphi}$. If (as in Example \ref{ex:phi*}) the functors $X
\otimes (-)$ and $(-)\otimes X$ are right exact, for any object $X$ in
${\mathcal C}$, then the pair of adjoint functors in Proposition \ref{pr:T
  bar} (3) consists of the `restriction of scalars' functor $\varphi^\ast:{\bf
T}$-$\mathrm{Mod}$-${\bf
  R} \to  {\bf R}$-$\mathrm{Mod}$-${\bf R}$ and the induction functor
$\varphi_\ast={\bf T}\otimes_{\bf R} (-): {\bf R}$-$\mathrm{Mod}$-${\bf
  R} \to  {\bf T}$-$\mathrm{Mod}$-${\bf R}$.
\end{example}

\section{The para-cocyclic object associated to a braid preserving
  homomorphism}\label{sec:main}

In this section we show, using the main result in
\cite{BohmStefan}, that to every braid preserving monad morphism there
corresponds an admissible septuple, and hence a certain para-cocyclic object.

Recall that, for every pair $(F,G)$ of adjoint functors, with
$F:\mathcal{C} \rightarrow \mathcal{D}$ and
$G:\mathcal{D}\rightarrow \mathcal{C},$ the triple $(GF,G\xi
F,\sigma )$ is a monad on $\mathcal{C}$, where $\xi $ and $ \sigma
$ are respectively the counit and the unit of the adjunction. For
details the reader is referred to \cite[Main Application 8.6.2, p.
280]{We}.
In particular, the pair of adjoint functors $(\varphi _{\ast },\varphi ^{\ast
})$, constructed in Proposition \ref{pr:T bar} (3), determines a monad
structure on $\varphi^\ast \varphi_\ast:
{}_{R}\mathcal{C}_{R}\to {}_{R}\mathcal{C}_{R}$ that will be denoted by
$(\overline{T}_o,\overline{t}_o,\overline{\tau}_o)$.

Since $\mathfrak{l}$ is invertible, the object in $\mathcal{C}$, underlying
the $(R,R)$-bimodule $\overline{T}_o(X,x,x^0)$, is the coequalizer of the
morphisms $x_i$ and $x_i^0$ in (\ref{ec: T_i}). Since $x_i$ and $x_i^0$
are $(R,R)$-bimodule morphisms by Proposition \ref{prop:T_i,T_o} (1), it
follows by Proposition \ref{le:coequalizer} that
\begin{equation*}
\overline{T}_o(X,x,x^0)=\mathrm{Coeq}(x_{i},x_{i}^{0})
\end{equation*}
in ${}_R\mathcal{C}_R$.
That is, $\varphi^\ast$ takes the canonical epimorphism $\pi
X:(TX,tX,x_o^0)\rightarrow \varphi _{\ast }(X,x,x^{0})$ to a canonical
$(R,R)$-bimodule epimorphism
${\pi}_o X: T_o (X,x,x^{0})\to \overline{T}_o (X,x,x^{0})$.
In other words, the respective actions $\overline{x}_o$ and
$\overline{x}_o^{0}$ of $R$ and $R^{\mathfrak{r}}$ on $\,\overline{T}_o X$ are
uniquely determined such that ${\pi}_o X:T_{o}X\rightarrow \overline{T}_o X$
is a morphism of $(R,R)$ -bimodules, in the sense that
\begin{equation}\label{ec:x_o bar}
\overline{x}_o \circ R{\pi}_o X={\pi}_o X\circ x_o\qquad\text{and}\qquad
\overline{x}_o^{0}\circ R^{\mathfrak{r}}{\pi}_o X={\pi}_o X\circ x_{o}^{0}.
\end{equation}
The unit of $\overline{T}_o$ is
\begin{equation}\label{ec:tau bar}
\overline{\tau }_o={\pi}_o \circ \tau .
\end{equation}
In order to compute the multiplication $\overline{t}_o$ of $\overline{T}_o$,
substitute $Y=\varphi _{\ast}(X,x,x^{0})$ in (\ref{ec:counit adjunction}),
apply $\varphi^\ast$ on it, and compose the resulting relation on the right by
$T\pi_o X.$ Since $T{\pi}_o X$ is a coequalizer and the module structure
$\overline{x}$ satisfies (\ref{ec:x bar}), we deduce that $\overline{t}_oX$ is
the unique morphism in ${}_R\mathcal{C}_R$ satisfying relation
\begin{equation}
\overline{t}_o X\circ {\pi}_o \overline{T}_o X\circ T{\pi}_o X=
{\pi}_o X\circ tX.  \label{ec:t bar}
\end{equation}

Summarizing, we prove the following proposition.

\begin{proposition}\label{pr:T_o bar}
Associated to a
\bpm $\mathcal{S}:=(\mathcal{C},T,R,\mathfrak{l},\mathfrak{t}, \mathfrak{r})$
and a
\lb monad morphism $\varphi:R \to T$,
there is a monad
$(\overline{T}_o,\overline{t}_o,\overline{\tau}_o)$ on ${}_{R}\mathcal{C}_{R}$
such that, for every $(R,R)$-bimodule $(X,x,x^{0})$,
$$
(\overline{T}_o X,{\pi}_o X)=\mathrm{Coeq}(x_{i},x_{i}^{0}),
$$
(cf. Proposition \ref{prop:i-o_mod} (1)). That is, the $R$, and
$R^{\mathfrak{r}}$-actions $\overline{x}_o$ and $\overline{x}_o^{0}$ on
$\overline{T}_o X$ are $\ $uniquely defined such that ${\pi}_o
X:T_{o}X\rightarrow \overline{T}_o X$ is a morphism of $(R,R)$-bimodules.
The multiplication $\overline{t}_o:\overline{T}_o\overline{T}_o\rightarrow
$ $\overline{T}_o$ satisfies (\ref{ec:t bar}) and the unit $\overline{\tau
}_o: {}_R {\mathcal C}_R \to \overline{T}_o$ satisfies (\ref{ec:tau bar}).
\end{proposition}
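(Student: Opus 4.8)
The plan is to recognise $(\overline T_o,\overline t_o,\overline\tau_o)$ as nothing but the monad canonically attached to the adjunction $(\varphi_\ast,\varphi^\ast)$ of Proposition \ref{pr:T bar} (3), via the construction recalled at the start of Section \ref{sec:main}: for an adjunction $(F,G)$ with unit $\sigma$ and counit $\xi$, the triple $(GF,G\xi F,\sigma)$ is a monad. Taking $F=\varphi_\ast$ and $G=\varphi^\ast$ gives $\overline T_o=\varphi^\ast\varphi_\ast$, $\overline t_o=\varphi^\ast\xi\varphi_\ast$ and $\overline\tau_o=\sigma$, so the existence of a monad on ${}_R\mathcal C_R$ is automatic. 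What remains is to rewrite these abstract data in the explicit form claimed, which is exactly the content of the two paragraphs preceding the statement; I would simply organise that material into a proof.

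First I would identify the underlying $(R,R)$-bimodule. By Proposition \ref{pr:adjoint functors} the $T$-module $\varphi_\ast(X,x)$ is $\mathrm{Coeq}(tX\circ T\varphi X,Tx)$ in $\mathcal C$; since $\mathfrak l$ is invertible, (\ref{ec: T_i}) gives $tX\circ T\varphi X=x_i^0\circ\mathfrak l^{-1}X$ and $Tx=x_i\circ\mathfrak l^{-1}X$, so this coequalizer coincides with $\mathrm{Coeq}(x_i,x_i^0)$ in $\mathcal C$. By Proposition \ref{prop:T_i,T_o} (1) the maps $x_i,x_i^0$ form a parallel pair of $(R,R)$-bimodule morphisms $\widetilde R T_o X\to T_o X$, so Proposition \ref{le:coequalizer}, applied to both the $R$- and the $R^{\mathfrak r}$-module structures, endows the coequalizer with a unique $(R,R)$-bimodule structure $(\overline x_o,\overline x_o^0)$ for which the canonical epimorphism is an $(R,R)$-bimodule morphism; that is, (\ref{ec:x_o bar}) holds and $(\overline T_o X,\pi_o X)=\mathrm{Coeq}(x_i,x_i^0)$ is formed in ${}_R\mathcal C_R$. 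Since $\varphi^\ast$ acts as the identity on underlying objects and morphisms, this coequalizer is precisely $\varphi^\ast$ applied to the $(T,R)$-bimodule morphism $\pi X$, i.e. $\overline T_o(X,x,x^0)$, and $\pi_o X=\varphi^\ast(\pi X)$.

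The unit formula (\ref{ec:tau bar}) is then immediate from the preceding Corollary: $\overline\tau_o=\sigma$ and $\sigma X=\pi X\circ\tau X=\pi_o X\circ\tau X$ by (\ref{ec:unit adjunction}). For the multiplication I would substitute $Y=\varphi_\ast(X,x,x^0)$ into the counit relation (\ref{ec:counit adjunction}), apply $\varphi^\ast$, and precompose with $T\pi_o X$; since $T$ preserves coequalizers, $T\pi_o X$ is again a coequalizer, and combining this with the defining property (\ref{ec:x bar}) of the $T$-action $\overline x$ on $\varphi_\ast X$ yields that $\overline t_oX$ is the unique morphism of ${}_R\mathcal C_R$ satisfying (\ref{ec:t bar}). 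The one genuinely delicate point — the step I would treat most carefully — is the simultaneous compatibility of the coequalizer with the $R$-, $R^{\mathfrak r}$- and $T$-structures, so that the object produced by the universal property in $\mathcal C$ is literally the one produced by the adjunction in ${}_R\mathcal C_R$; everything else is bookkeeping with the general adjunction–monad correspondence, resting on the invertibility of $\mathfrak l$ together with Propositions \ref{le:coequalizer}, \ref{prop:i-o_mod} and \ref{prop:T_i,T_o}.
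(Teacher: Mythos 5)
Your proposal is correct and coincides with the paper's own argument: the paper likewise obtains $(\overline{T}_o,\overline{t}_o,\overline{\tau}_o)$ as the monad of the adjunction $(\varphi_\ast,\varphi^\ast)$ from Proposition \ref{pr:T bar}, identifies the underlying object as $\mathrm{Coeq}(x_i,x_i^0)$ via invertibility of $\mathfrak{l}$ together with Propositions \ref{prop:T_i,T_o} (1) and \ref{le:coequalizer}, and derives \eqref{ec:tau bar} and \eqref{ec:t bar} from \eqref{ec:unit adjunction}, \eqref{ec:counit adjunction} and \eqref{ec:x bar} exactly as you do. No gaps beyond those the paper itself leaves implicit.
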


A symmetrical version of Proposition \ref{pr:T_o bar} is obtained by replacing
the
\bpm $\mathcal{S}$ with $\mathcal{S}^0$, introduced in
Remark \ref{rem:op}.
\begin{proposition}\label{pr:T_i bar}
Associated to a
\bpm $\mathcal{S}:=(\mathcal{C},T,R,\mathfrak{l},\mathfrak{t},\mathfrak{r})$
and a
\lb monad morphism $\varphi:R \to T$,
there is a monad $(\overline{T}_i,\overline{t}_i,\overline{\tau }_i)$ on
${}_{R}\mathcal{C}_{R}$ such that, for every $(R,R)$-bimodule $(X,x,x^{0}),$
\begin{equation}
(\overline{T}_{i}X,{\pi}_{i}X)=\mathrm{Coeq}(x_{o},x_{o}^{0}),
\label{ec:T tilde}
\end{equation}
(cf. Proposition \ref{prop:i-o_mod} (2)). That is,
the actions $\overline{x}_i$ and $\overline{x}_i^{0}$ on
$\overline{T}_{i}X$ are
uniquely defined such that ${\pi}_{i}X:T_{i}X\rightarrow \overline{T}_{i}
X $ is a morphism of $(R,R)$-bimodules, i.e.
\begin{equation}
\overline{x}_i\circ R{\pi}_{i}X={\pi}_{i}X\circ x_{i}\text{\qquad and\qquad }
\overline{x}_i^{0}\circ R^{\mathfrak{r}}{\pi}_{i}X={\pi}_{i}X\circ x_{i}^{0}.
\label{ec:x tilde}
\end{equation}
The unit  $\overline{\tau }_i$
and the multiplication $\overline{t}_i$ satisfy
\begin{equation}\label{ec:t_i bar}
\overline{\tau }_i=\pi_i  \circ \tau
\qquad\textrm{and}\qquad
\overline{t}_i \circ {\pi}_i \overline{T}_i \circ T{\pi}_i =
{\pi}_i \circ t\circ \mathfrak{t}^{-1}.
\end{equation}
\end{proposition}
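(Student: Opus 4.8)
The plan is to deduce this from Proposition \ref{pr:T_o bar} by the substitution $\mathcal{S}\rightsquigarrow\mathcal{S}^{0}$ of Remark \ref{rem:op}. Thus I would run the entire development culminating in Proposition \ref{pr:T_o bar} --- that is Proposition \ref{pr:T bar}, the construction of the monad $\overline{T}_{o}=\varphi^{\ast}\varphi_{\ast}$, and Proposition \ref{pr:T_o bar} itself --- with the \bpm $\mathcal{S}=(\mathcal{C},T,R,\mathfrak{l},\mathfrak{t},\mathfrak{r})$ replaced throughout by $\mathcal{S}^{0}=(\mathcal{C},T^{\mathfrak{t}^{-1}},R^{\mathfrak{r}},\mathfrak{l},\mathfrak{t}^{-1},\mathfrak{r}^{-1})$, for which $\varphi\colon R\to T$ is still a \lb monad morphism by Remark \ref{rem:op}. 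This yields a monad on the category of $R^{\mathfrak{r}}$-bimodules with respect to the BD-law $\mathfrak{r}^{-1}$, i.e. on the category ${}_{R^{\mathfrak{r}}}\mathcal{C}_{R^{\mathfrak{r}}}$ which, by Remark \ref{rem:R_op}, is isomorphic to ${}_{R}\mathcal{C}_{R}$ through the object map $(X,x,x^{0})\mapsto(X,x^{0},x)$. Transporting this monad along that isomorphism I would \emph{define} to be $(\overline{T}_{i},\overline{t}_{i},\overline{\tau}_{i})$. Since an isomorphism of categories carries monads to monads, the existence statement is immediate, and the only remaining task is to rewrite, in the notation of $\mathcal{S}$, the coequalizer presentation of $\overline{T}_{o}$ for $\mathcal{S}^{0}$ together with the formulas (\ref{ec:tau bar}) and (\ref{ec:t bar}).

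For the coequalizer presentation, Proposition \ref{pr:T_o bar} applied to $\mathcal{S}^{0}$ describes $\overline{T}_{o}X$ as the coequalizer of the ``inside structure'' morphisms of $\mathcal{S}^{0}$, with a canonical epimorphism out of the ``outside structure'' functor of $\mathcal{S}^{0}$. Now the inside structure of $\mathcal{S}^{0}$ becomes, after the relabeling of the two actions induced by the isomorphism of Remark \ref{rem:R_op}, precisely the outside structure $(TX,x_{o},x_{o}^{0})$ of $\mathcal{S}$ of (\ref{ec: T_o}); this is exactly the identification already used in the proof of Proposition \ref{prop:i-o_mod} (2), and it amounts to rewriting the ``$T$''-type action $(t\circ\mathfrak{t}^{-1})X\circ T\varphi X\circ\mathfrak{l}X$ as $tX\circ\varphi TX$ by means of the distributivity of $\mathfrak{l}$ and condition (A~4) (compare the first equality in the proof of Proposition \ref{prop:T_i,T_o} (3)). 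Dually, the outside structure of $\mathcal{S}^{0}$ corresponds to the inside structure of $\mathcal{S}$. Hence Proposition \ref{pr:T_o bar} for $\mathcal{S}^{0}$ gives (\ref{ec:T tilde}), namely $\overline{T}_{i}X=\mathrm{Coeq}(x_{o},x_{o}^{0})$ in ${}_{R}\mathcal{C}_{R}$, with the canonical epimorphism $\pi_{i}X\colon T_{i}X\to\overline{T}_{i}X$ characterized by (\ref{ec:x tilde}); that all the coequalizers involved live in ${}_{R}\mathcal{C}_{R}$ and that $x_{o},x_{o}^{0},x_{i},x_{i}^{0}$ are $(R,R)$-bimodule morphisms is delivered --- again through the same substitution --- by Propositions \ref{le:coequalizer}, \ref{prop:i-o_mod} and \ref{prop:T_i,T_o}.

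Finally, the unit and the multiplication of $\overline{T}_{i}$ are read off (\ref{ec:tau bar}) and (\ref{ec:t bar}) for $\mathcal{S}^{0}$: since the underlying functor of $T^{\mathfrak{t}^{-1}}$ is $T$ and its unit is $\tau$, (\ref{ec:tau bar}) becomes $\overline{\tau}_{i}=\pi_{i}\circ\tau$; since its multiplication is $t\circ\mathfrak{t}^{-1}$, (\ref{ec:t bar}) becomes $\overline{t}_{i}\circ\pi_{i}\overline{T}_{i}\circ T\pi_{i}=\pi_{i}\circ t\circ\mathfrak{t}^{-1}$, which is (\ref{ec:t_i bar}); so the deformation factor $\mathfrak{t}^{-1}$ appearing there is nothing but this substitution $t\rightsquigarrow t\circ\mathfrak{t}^{-1}$. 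The main obstacle is, essentially, only bookkeeping: along the double dualization $\mathcal{S}\leftrightarrow\mathcal{S}^{0}$ and ${}_{R}\mathcal{C}_{R}\leftrightarrow{}_{R^{\mathfrak{r}}}\mathcal{C}_{R^{\mathfrak{r}}}$ one must consistently keep track of which of the two commuting actions is the $R$-action and which the $R^{\mathfrak{r}}$-action --- hence of the interchange of ``inside'' with ``outside'' and of $\widetilde{R}$ with $\widetilde{R}^{0}$ --- and of the resulting occurrences of $\mathfrak{t}^{-1}$; beyond this, no computation is required that was not already carried out for Propositions \ref{prop:i-o_mod}, \ref{prop:T_i,T_o} and \ref{pr:T_o bar}.
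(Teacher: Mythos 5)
Your proposal is correct and is exactly the paper's own argument: the paper obtains Proposition \ref{pr:T_i bar} as the ``symmetrical version'' of Proposition \ref{pr:T_o bar} by the substitution $\mathcal{S}\rightsquigarrow\mathcal{S}^{0}$ of Remark \ref{rem:op}, combined with the isomorphism ${}_{R}\mathcal{C}_{R}\cong{}_{R^{\mathfrak{r}}}\mathcal{C}_{R^{\mathfrak{r}}}$ of Remark \ref{rem:R_op}, which is precisely your route. You merely spell out the bookkeeping (inside structure of $\mathcal{S}^{0}$ = outside structure of $\mathcal{S}$, via (A~4) and the distributivity of $\mathfrak{l}$, and the replacement $t\rightsquigarrow t\circ\mathfrak{t}^{-1}$ producing \eqref{ec:t_i bar}) in more detail than the paper does.
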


\begin{example}\label{ex:bar_T}
Let $\mathcal{C}$ be a braided monoidal category. For the \lb
monad morphism coming from an algebra homomorphism
$\boldsymbol{\varphi}: {\bf R}\to {\bf T}$ as in Example
\ref{ex:alg_sep}, ${\bf T}$ has a natural ${\bf R}$-bimodule
structure via $\boldsymbol{\varphi}$.
If (as in Example \ref{ex:phi*}) the functors $X
\otimes (-)$ and $(-)\otimes X$ are right exact, for any object $X$ in
${\mathcal C}$, then the functor underlying the
monad in Proposition \ref{pr:T_o bar} is induced by the ${\bf
R}$-bimodule ${\bf T}$, i.e. $\overline{T}_o={\bf T}\otimes_{\bf
R} (-):{\bf R}$-$\mathrm{Mod}$-${\bf R}
\to {\bf R}$-$\mathrm{Mod}$-${\bf R}$. Multiplication is given by
$({\overline{\bf t}}_o \otimes_{\bf R} (-))\circ {\overline{\alpha}}_{{\bf
    T},{\bf T}, (-)}^{-1}$, where $\overline{\alpha}$ is the associator
isomorphism
in ${\bf R}$-$\mathrm{Mod}$-${\bf R}$ and the ${\bf R}$-bimodule morphism
${\overline{\bf t}}_o:{\bf T}\otimes_{\bf R} {\bf T}\to {\bf T}$ is defined as
the projection of ${{\bf t}}:{\bf T}\otimes {\bf T}\to {\bf T}$. The
unit
is $\boldsymbol{\varphi} \otimes_{\bf R} (-)$ (where we used that ${\bf
  R}\otimes_{\bf R} (-)$ is naturally isomorphic to the identity functor on
${\bf R}$-$\mathrm{Mod}$-${\bf R}$).

Symmetrically, on an $\bf{R}$-bimodule $(X,\mu^l_X,\mu^r_X)$, the functor
$\overline{T}_i$ is defined as the coequalizer of $x_o=({\bf t}\otimes X)
\circ [(\boldsymbol{\varphi}\otimes {\bf T})\otimes X]\circ \alpha^{-1}_{{\bf
R},{\bf T},X}$ and
$x_o^0=(T\otimes \mu^r_X)\circ \alpha_{{\bf T},X,{\bf R}}\circ
\chi_{{\bf R},{\bf T}\otimes X}$. Equivalently, composing both $x_o$ and
$x_o^0$ by the isomorphism $\alpha_{{\bf R},{\bf T},X}\circ (\chi^{-1}_{{\bf
    R},{\bf T}} \otimes X)$, as a coequalizer
$$
\xymatrix{
({\bf T} \otimes {\bf R}) \otimes X \ar@<1mm>[rrrr]^-{[{\bf T} \otimes
      (\mu^r_X\circ \chi_{{\bf R},X})]\circ \alpha_{{\bf T},{\bf R},X}}
\ar@<-1mm>[rrrr]_-{[{\bf t}\circ \chi_{{\bf T},{\bf T}} \circ ({\bf T}
    \otimes  \boldsymbol{\varphi})]\otimes X} &&&&
{\bf T}\otimes X \ar[rr]^{\pi_i } && \overline{T}_i X.
}
$$
In order to obtain the form of the morphism corresponding to the lower one of
the parallel arrows, we used the first identity in \eqref{eq:br_ex}.
Note that  $\mu^r_X\circ \chi_{{\bf R},X}$ is a left action,
and ${\bf t}\circ \chi_{{\bf T},{\bf T}} \circ ({\bf T} \otimes
\boldsymbol{\varphi})$ is a right action for the algebra
${\bf R}^\chi:=({\bf R}, \boldsymbol{r}\circ \chi_{{\bf R},{\bf R}},
\boldsymbol{\rho})$.
Therefore, ${\overline{T}}_i X = {\bf T} \otimes_{{\bf R}^{\chi}} X$.
Since $\boldsymbol{\varphi}$ can be regarded as an algebra homomorphism ${\bf
  R}^\chi \to {\bf T}^{\chi^{-1}}$, the unit of the monad ${\overline{T}}_i (-) =
{\bf T} \otimes_{{\bf R}^{\chi}} (-)$ is given by $\boldsymbol{\varphi}
\otimes_{{\bf R}^{\chi}} (-)$. Multiplication is induced by the projection
${\overline{\bf t}}_i:{\bf T}\otimes_{{\bf R}^\chi} {\bf T}\to {\bf T}$ of the
${\bf R}^\chi$-bimodule morphism 
${{\bf t}}\circ \chi_{{\bf T},{\bf T}}^{-1}:{\bf T}\otimes {\bf T}\to {\bf T}$.
Note that, if $\chi$ is a symmetry, then ${\bf R}^\chi={\bf R}^{op}$, hence
${\overline T}_i = {\bf T}^{op} \otimes_{{\bf R}^{op}} (-)\cong (-)
\otimes_{\bf  R} {\bf T}$.

\end{example}

Next we introduce a functor $\Pi:{}_R\mathcal{C}_R\to \mathcal{C}$, that will
be used to construct a para-cocyclic module associated to a
\lb monad morphism.

\begin{definition}\label{def:Pi}
For a
\emph{BD}-law $\mathfrak{r}:RR\to RR$,
the functor $\Pi :{}_{R}\mathcal{C}_{R}\rightarrow \mathcal{C}$ is defined,
for an $(R,R)$-bimodule $(X,x,x^{0}),\ $by
\begin{equation*}
(\Pi X,pX):=\mathrm{Coeq}(x,x^{0}).
\end{equation*}
For every morphism $f:X\rightarrow Y$ of $(R,R)$-bimodules, $\Pi f$ is the
unique morphism in $\mathcal{C}$ such that $pY\circ f=\Pi f\circ pX.$ Hence
$p$ can be interpreted as a natural epimorphism from the forgetful functor
$U:{}_R \mathcal{C}_R\to \mathcal{C}$ to $\Pi$.
\end{definition}

In the graphical notation we do not denote the forgetful functor $U: {}_R
\mathcal{C}_R\to \mathcal{C}$. That is,
a box representing the natural transformation $p$ in Definition \ref{def:Pi}
has only a lower leg, corresponding to the functor $\Pi$.

\begin{remark}
In the context of Definition \ref{def:Pi}, the functor $\Pi T_i :{}_R
{\mathcal C}_R \to {\mathcal C}$ is equal to the composite of
${\overline T}_o:{}_R {\mathcal C}_R \to {}_R {\mathcal C}_R$ and the
forgetful functor $U:{}_R {\mathcal C}_R\to {\mathcal C}$.
Symmetrically, $\Pi T_o=U{\overline T}_i$.
\end{remark}

\begin{example}\label{ex:bimod_Pi}
For the \lb monad morphism coming from an algebra homomorphism
$\boldsymbol{\varphi}: {\bf R}\to {\bf T}$ as in Example
\ref{ex:alg_sep}, the functor $\Pi$ maps an ${\bf R}$-bimodule
$(X,\mu_X^l,\mu_X^r)$ to
\[
\Pi X= \textrm{Coker}(\mu_X^l-\mu_X^r\circ\chi_{{\bf R},X}).
\]
For two $\bf R$-bimodules $X$ and $Y$ we shall use the notation
$X\widehat{\otimes}_{\bf R}Y:= \Pi (X\otimes_{\bf R} Y)$ and we
shall say that this object in $\mathcal{C}$ is the \emph{braided
cyclic tensor product} of  $X$ and $Y$.
\end{example}

In the next theorem we prove that the \emph{BD}-law $\mathfrak{t}$, in a
\bpm $({\mathcal C}, T,R, \mathfrak{l}, \mathfrak{t}, \mathfrak{r})$, lifts to
a distributive law of the monads $\overline{T}_o$ and $\overline{T}_i$ in
Propositions \ref{pr:T_o bar} and \ref{pr:T_i bar}.

\begin{theorem}\label{te:t' and t bar}
Let $\mathcal{S}=(\mathcal{C},T,R,\mathfrak{l},\mathfrak{t}, \mathfrak{r}) $
be a
\bpm and
$\varphi:R\to T$ be a
\lb monad morphism.
Consider the associated functors $T_i$, $T_o$ and $\overline{T}_i$,
$\overline{T}_o$, and the natural epimorphisms $\pi_i:T_i\to \overline{T}_i$
and $\pi_o:T_o\to \overline{T}_o$ in Propositions \ref{pr:T_o bar} and
\ref{pr:T_i bar}.

\begin{enumerate}
\item There is a natural transformation $\mathfrak{t}^{\prime }:T_{i}
\overline{T}_o\rightarrow \overline{T}_oT_{i}$, between endofunctors on the
category of $(R,R)$-bimodules, such that
\begin{equation}  \label{ec:t'_1}
\mathfrak{t}^{\prime }\circ T_i{\pi_o}={\pi_o}T_i\circ \mathfrak{
t}.
\end{equation}

\item There is a natural transformation $\overline{\mathfrak{t}}:
\overline{T}_{i}\overline{T}_o \rightarrow \overline{T}_o\overline{T}_i $,
between endofunctors on the category of $(R,R)$-bimodules, such that
\begin{equation}
{\pi_o}\overline{T}_i \circ T_o{\pi}_i \circ \mathfrak{t}=
\overline{\mathfrak{t}}\circ {\pi}_i \overline{T}_o\circ
T_i\pi_o.  \label{ec:t induced}
\end{equation}
\item $\overline{\mathfrak{t}}:
\overline{T}_{i}\overline{T}_o \rightarrow \overline{T}_o\overline{T}_i $ is a
distributive law.
\end{enumerate}
\end{theorem}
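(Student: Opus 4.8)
The strategy is to build $\mathfrak{t}'$ and $\overline{\mathfrak{t}}$ by successive use of the universal property of coequalizers, exactly as $\overline{T}_o$ and $\overline{T}_i$ were built from $T_o$, $T_i$ in Propositions~\ref{pr:T_o bar} and \ref{pr:T_i bar}, and then to check the distributive law axioms for $\overline{\mathfrak{t}}$ by pulling each of the four identities back along the appropriate natural epimorphism, where it reduces (via the already-established relations \eqref{ec:t'_1}, \eqref{ec:t induced}, \eqref{ec:t bar}, \eqref{ec:t_i bar} and the naturality of $\pi_i,\pi_o$) to the corresponding axiom for $\mathfrak{t}$, which holds because $\mathfrak{t}$ is a BD-law in $\mathcal{S}$.

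\textbf{(1)} I would first observe that, for an $(R,R)$-bimodule $(X,x,x^0)$, applying $T_i$ to the coequalizer $(\overline{T}_o X, \pi_o X)=\mathrm{Coeq}(x_i,x_i^0)$ still gives a coequalizer in ${}_R\mathcal{C}_R$, since $T_i$ is a composite of $T$ (which preserves coequalizers by hypothesis on $\mathcal{S}$) with forgetful and structure functors that reflect and preserve them via Proposition~\ref{le:coequalizer}; thus $T_i\pi_o$ is the coequalizer of $T_ix_i$ and $T_ix_i^0$. Then I would show that $\pi_o T_i X\circ \mathfrak{t}X$ coequalizes this pair. That uses Proposition~\ref{prop:T_i,T_o}~(3): $\mathfrak{t}$ is a morphism $T_iT_o\to T_oT_i$ of $(R,R)$-bimodules, so in particular it intertwines the inner actions, i.e. $\mathfrak{t}\circ (x_i\text{-type maps on }T_o)=(\text{inner maps on }T_o)\circ\mathfrak{t}$; combined with the defining relation \eqref{ec:x_o bar} for $\overline{x}_o$, $\overline{x}_o^0$ and naturality of $\pi_o$, the composite $\pi_o T_i X\circ \mathfrak{t}X$ is seen to equalize $T_ix_i$ and $T_ix_i^0$. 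The universal property then yields a unique $(R,R)$-bimodule morphism $\mathfrak{t}'X:T_i\overline{T}_oX\to \overline{T}_oT_iX$ satisfying \eqref{ec:t'_1}, and naturality of $\mathfrak{t}'$ follows because all data in sight are natural and $T_i\pi_o$ is (component-wise) epi.

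\textbf{(2)} Symmetrically, I would first check that $\pi_i\overline{T}_o\circ T_i\pi_o$ is a coequalizer — it is the ``composite coequalizer'' expressing $\overline{T}_i\overline{T}_o$ as a quotient of $T_iT_o$ — using again that $T_i$, $T_o$ preserve coequalizers and that a composite of coequalizer projections is one. Then I would verify that $\pi_o\overline{T}_i\circ T_o\pi_i\circ \mathfrak{t}$ coequalizes the relevant parallel pairs on $T_iT_o$; the outer part follows from part~(1) applied with $\overline{T}_o$ in place of $X$, together with the fact that $\pi_i:T_i\to\overline{T}_i$ is a morphism of $(R,R)$-bimodules (so it intertwines the outer actions), and the inner part follows from \eqref{ec:x_o bar} for $\overline{x}_o$ once more. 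The universal property then produces the unique $\overline{\mathfrak{t}}$ with \eqref{ec:t induced}; naturality is automatic.

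\textbf{(3)} For the distributive-law axioms I would treat each of the four conditions in Figure~\ref{fig:DLaw} separately, in each case composing on the right with the epimorphism $\pi_i\overline{T}_o\circ T_i\pi_o$ (or, for the unit axioms, using the units \eqref{ec:tau bar}, \eqref{ec:t_i bar} and naturality of $\pi_i$, $\pi_o$). For the two ``multiplicativity'' axioms one moreover composes with further copies of $T$-images of $\pi_i,\pi_o$, uses the defining relations \eqref{ec:t bar} and \eqref{ec:t_i bar} for $\overline{t}_o$, $\overline{t}_i$, the intertwining relations \eqref{ec:t'_1} and \eqref{ec:t induced}, and then the statement reduces to the equality of two natural transformations precomposed with an epimorphism built from $T$-images of $\pi_i,\pi_o$ — and that equality is exactly the corresponding BD-law axiom for $\mathfrak{t}$ in $\mathcal{S}$ (the YB-equation for $\mathfrak{t}$ is what guarantees, on the underlying level, that all the bracketings match up). I expect the main obstacle to be purely bookkeeping: making sure at each stage that the map one has ``pulled back'' is genuinely a coequalizer (so that the pulled-back identity is equivalent to the original), which requires keeping careful track of which functors among $T_i$, $T_o$, $\overline{T}_i$, $\overline{T}_o$ preserve coequalizers and invoking Proposition~\ref{le:coequalizer} at the right places; the diagrammatic calculus of Figures~\ref{fig:DLaw}--\ref{fig:Septuple} is the natural tool to carry out the (otherwise lengthy) verification cleanly.
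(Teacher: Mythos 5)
Your overall strategy---define $\mathfrak{t}'$ and $\overline{\mathfrak{t}}$ by the universal property of the coequalizers underlying $\overline{T}_o$ and $\overline{T}_i$, and verify the distributive-law axioms for $\overline{\mathfrak{t}}$ after precomposing with the epimorphisms built from $\pi_i$, $\pi_o$ and their $T$-images---is the same as the paper's, and your outline of part (3) matches the paper's argument. However, there is a genuine gap at the central step of parts (1) and (2): nowhere do you invoke the braid preserving conditions (A~3), (A~4) of Figure~\ref{fig:Septuple}, nor the distributive-law/\emph{YB} axioms of $\mathfrak{t}$ at that stage, and these are precisely what make the required factorizations exist.

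Concretely, in (1) you must show that $\pi_o T_i X\circ \mathfrak{t}X$ coequalizes the pair obtained by applying $T_i$ to the pair $(x_i,x_i^0)$ defining $\overline{T}_o X$, and you justify this by Proposition~\ref{prop:T_i,T_o}~(3) (``$\mathfrak{t}X$ is an $(R,R)$-bimodule morphism, so it intertwines the inner actions'') together with \eqref{ec:x_o bar} and naturality of $\pi_o$. That does not suffice: the bimodule-morphism property of $\mathfrak{t}X$ concerns the structure maps of the bimodules $T_iT_oX$ and $T_oT_iX$, whereas the pair you must coequalize is $T_i$ of the inner pair of $X$, and the pair coequalized by $\pi_o T_iX$ is the inner pair of the bimodule $T_iX$---different morphisms. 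Naturality of $\mathfrak{t}$ in $\mathcal{C}$ handles only the component $T_iT_ox$; the component containing $tX\circ T\varphi X$ can be pushed past $\mathfrak{t}X$ onto the corresponding component of the pair defining $\overline{T}_o(T_iX)$ only by using the multiplicative axiom of the distributive law $\mathfrak{t}$ together with condition (A~4) relating $\mathfrak{t}$, $\varphi$ and $\mathfrak{l}$ (in the braided-algebra case this is exactly the first identity in \eqref{eq:br_ex}); this is how the paper makes the two squares of Figure~\ref{fig:t_bar} commute. The same ingredients ((A~4), naturality of $\varphi$, the distributive-law property of $\mathfrak{t}$) are needed again in part (2), cf.\ the identities \eqref{eq:sq1}, where your appeal to ``part (1) with $\overline{T}_o$ in place of $X$'' and to $\pi_i$ being a bimodule map does not by itself yield the coequalizing property needed to factor through $\pi_i\overline{T}_o X$. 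Since for a monad morphism that is not braid preserving these factorizations cannot be expected to exist, the steps as you justify them would fail; the hypothesis on $\varphi$ must enter exactly here.
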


\begin{proof}
(1). Let $(X,x,x^{0})$ be a given $(R,R)$-bimodule.  Since
$T$ preserves coequalizers, the sequence in the top row of the diagram in
  Figure~\ref {fig:t_bar} is a coequalizer.
\begin{figure}[h]
\begin{center}
\fbox{
\xymatrix{
T_i T_o{\widetilde R} X \ar@<1mm>[rrrr]^{f_1:=T_itX\circ T_i T_o \varphi X}
           \ar@<-1mm>[rrrr]_{g_1:=T_i T_o x}
           \ar[d]_{\mathfrak{t} {\widetilde R} X}&&&&
T_i T_o X \ar[r]^{T_i \pi_o X}
          \ar[d]_{\mathfrak{t}X}&
T_i\overline{T}_o X
          \ar@{.>}[d]^{\mathfrak{t}'X}\\
T_o T_i {\widetilde R} X \ar@<1mm>[rrrr]^{{f_2:=tT_iX\circ T_o\varphi T_iX\circ
                           T_o\mathfrak{l}^{-1}X}}
           \ar@<-1mm>[rrrr]_{g_2:=T_o T_i x}&&&&
T_o T_i X \ar[r]^{\pi_o T_i X}&
\overline{T}_o T_i X
}}
\end{center}
\caption{Construction of $\mathfrak{t}^{\prime}$.}
\label{fig:t_bar}
\end{figure}
Since $\mathfrak{l}$ is invertible, the sequence in the bottom row is a
coequalizer by the definition of $\pi_o$.
By Proposition \ref{prop:T_i,T_o} (3) the square whose horizontal edges are
$g_1$ and
$g_2$ is commutative. Using that $\mathfrak{t}$ is a \emph{BD}-law and taking
into account condition (A~4), it results that the square with horizontal
arrows $f_1$ and $f_2$ is also commutative.
Hence ${\pi_o}T_iX\circ \mathfrak{t}X$ coequalizes $(f_1,g_1)$.
Furthermore, by Proposition \ref{prop:T_i,T_o} (3),
$\mathfrak{t}X :T_{i}T_{o}X\rightarrow T_{o}T_{i}X$ is a morphism
of $(R,R)$-bimodules. By
construction of $\overline{T}_o,$ the canonical epimorphism ${\pi_o}X
:T_{o}X\rightarrow \overline{T}_oX$ is a morphisms of $(R,R)$-bimodules.
Hence ${\pi_o}T_{i}X:T_{o}T_{i}X\rightarrow \overline{T}_oT_{i}X$ and
$T_{i}{\pi_o}X:T_{i}T_{o}X\rightarrow T_{i}\overline{T}_oX$ are also
morphisms in $_{R}\mathcal{C}_{R}.$ Moreover,
$tX\circ T\varphi X$ and $Tx$ are $(R,R)$-bimodule morphisms by
Proposition \ref{prop:T_i,T_o} (1) and (4). Hence so are $f_1$ and $g_1$ by
Proposition \ref{prop:i-o_mod} (1).
In conclusion, we can apply Proposition \ref{le:coequalizer} in the category
$_R \mathcal{C}_R$ to show that there exists an $(R,R)$-bimodule morphism
$\mathfrak{t}^{\prime }X$, rendering commutative the right hand side square on
Figure~\ref {fig:t_bar}. By construction, $\mathfrak{t}^{\prime }$ is natural.

(2). It follows by condition (A~4), naturality of $\varphi$
   and the fact that $\mathfrak{t}$ is a distributive law that the morphisms
   $x_o$ and $x_o^0$ in (\ref{ec: T_o}) satisfy
\begin{equation}\label{eq:sq1}
T_o x_o\circ \mathfrak{l} T_i X\circ R \mathfrak{t} X = \mathfrak{t}X \circ
tT_o X \circ \varphi T_i T_o X\qquad \textrm{and}\qquad
T_i x_o^0 \circ \mathfrak{l} T_i X \circ R \mathfrak{t} X = \mathfrak{t}X
\circ T_i x_o^0 \circ \mathfrak{l} T_o X.
\end{equation}
Taking into account (\ref{ec:t'_1}), the identities in (\ref{eq:sq1}) imply
\begin{eqnarray}\label{eq:sq2}
&&\mathfrak{t}' X\circ T_i\pi_oX\circ tT_o X \circ \varphi T_i T_o X=
\pi_o T_i X \circ T_o x_o\circ \mathfrak{l} T_i X\circ R \mathfrak{t} X
\qquad \textrm{and}\nonumber\\
&&\mathfrak{t}' X\circ T_i\pi_oX\circ T_i x_o^0 \circ \mathfrak{l} T_o X =
\pi_o T_i X \circ T_o x_o^0 \circ \mathfrak{l} T_i X \circ R \mathfrak{t} X.
\end{eqnarray}
Using the naturality of $t$, $\varphi$ and $\pi_o$ together with the
$(R,R)$-bimodule morphism property (\ref{ec:x_o bar})
of $\pi_o$, the equations in (\ref{eq:sq2}) can be seen to
be equivalent to the commutativity of the left hand side squares on Figure
\ref{fig:bar t}.
\begin{figure}[h]
\begin{center}
\fbox{
\xymatrix{
{\widetilde R}^0T_i T_o X \ar[r]^{{\widetilde R}^0T_i \pi_o X}
\ar[d]_{{\widetilde R}^0\mathfrak{t}X}& {\widetilde R}^0T_i \overline{T}_o X
\ar@<.5mm>[rr]^{t\overline{T}_o X\circ \varphi T_i \overline{T}_o X}
\ar@<-.5mm>[rr]_{T_i \overline{x}_o^0\circ \mathfrak{l}\overline{T}_o X}&&
T_i \overline{T}_o X \ar[r]^{\pi_i \overline{T}_o X}
\ar[dd]^{\mathfrak{t}'X}&
\overline{T}_i \overline{T}_o X
\ar@{.>}[dd]^{\overline{\mathfrak{t}}X}\\
{\widetilde R}^0T_oT_iX\ar[d]_{\mathfrak{l} T_iX}& && &\\
T_o {\widetilde R}^0 T_i X \ar[r]^{\pi_o {\widetilde R}^0 T_i X}&
\overline{T}_o {\widetilde R}^0 T_i X
\ar@<.5mm>[rr]^{\overline{T}_o x_o}
\ar@<-.5mm>[rr]_{\overline{T}_o x_o^0}&&
\overline{T}_o T_i X \ar[r]^{\overline{T}_o \pi_i X}&
\overline{T}_o \overline{T}_i X
}}
\end{center}
\caption{Construction of $\overline{\mathfrak{t}}$.}
\label{fig:bar t}
\end{figure}
Since $\pi_i X$ coequalizes $(x_o,x_o^0)$, the morphism $\overline{T}_o \pi_i
X$ coequalizes $\overline{T}_o x_o \circ \pi_o {\widetilde R}^0 T_i X \circ
\mathfrak{l} T_i X \circ {\widetilde R}^0 \mathfrak{t} X$ and $\overline{T}_o
x_o^0 \circ \pi_o {\widetilde R}^0 T_i X \circ \mathfrak{l} T_i X \circ
{\widetilde R}^0\mathfrak{t} X$. Hence $\overline{T}_o \pi_i
X\circ \mathfrak{t}'X$ coequalizes
$t\overline{T}_o X\circ \varphi T_i \overline{T}_o X\circ
{\widetilde R}^0T_i \pi_o X$ and $T_i \overline{x}_o^0\circ
\mathfrak{l}\overline{T}_o X \circ {\widetilde R}^0T_i \pi_o X$. Since
${\widetilde R}^0T_i \pi_o X$ is an
epimorphism, this implies the existence of a morphism
$\overline{\mathfrak{t}}X$ in $\mathcal{C}$, rendering commutative the right
hand side square on Figure \ref{fig:bar t}.
Furthermore, the parallel arrows $t\overline{T}_o X\circ \varphi T_i
\overline{T}_o X$ and $T_i \overline{x}_o^0\circ \mathfrak{l}\overline{T}_o X$
on Figure \ref{fig:bar t} are morphisms of $(R,R)$-bimodules
by Proposition \ref{prop:T_i,T_o} (2) and $\mathfrak{t}'X$ is a morphism of
$(R,R)$-bimodules by part (1).
By applying Proposition \ref{le:coequalizer} in $_R\mathcal{C}_R$, we conclude
that $\overline{\mathfrak{t}}X$ is an $(R,R)$-bimodule morphism.
Naturality of $\overline{\mathfrak{t}}$ follows obviously
from the naturality of $\mathfrak{t}'$.

(3). The first axiom in Definition \ref{de:distributive law} is
proven in the left frame in Figure~\ref{fig:tbar}. The first and
fourth equalities follow by \eqref{ec:t induced} and the fact that
$\pi_i$ and $\pi_o$ are natural transformations. The second and fifth
equalities are consequences of the second identity in \eqref{ec:t_i
bar}. For the third relation we used \emph{YB}-equation on
$\mathfrak{t}$ and that $\mathfrak{t}$ is a distributive law.
Since $ \pi_i {\overline T}_i {\overline T}_o\circ T \pi_i
{\overline T}_o \circ TT \pi_o$ is epi, this proves the first
axiom in the definition of distributivity laws.
\begin{figure}[h]
\begin{center}
\includegraphics[scale=.78]{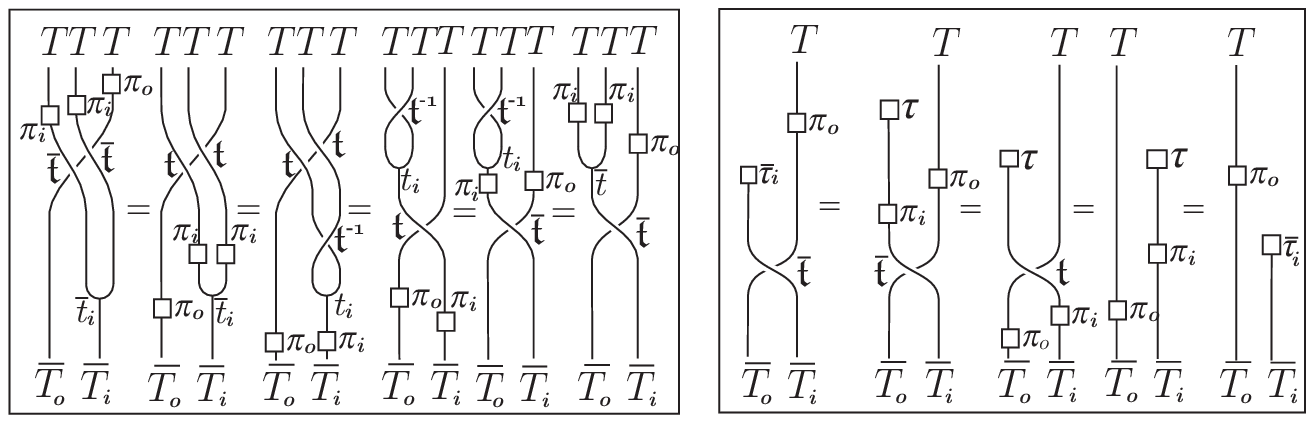}
\end{center}
\caption{$\bar{\mathfrak{t}}$ is a distributivity law.}
\label{fig:tbar}
\end{figure}

Similarly, taking into account the first identity in \eqref{ec:t_i
bar} we get the first and last equalities in the right frame of
Figure~\ref{fig:tbar}. By \eqref{ec:t induced} and the fact that
$\mathfrak{t}$ is a distributive law we deduce the second and the
third identities. Note that in the first, second and last
equalities we also used naturality which, in this case, means that e.g.
$\pi_o$ can by pushed up and down along a string. Since $\pi_o$ is
an epimorphism, this proves that $\overline{\mathfrak{t}}$
satisfies the third axiom of a distributive law in Definition
\ref{de:distributive law}. The remaining two axioms are verified
similarly.
\end{proof}

\begin{theorem}\label{thm:i}
Take a
\bpm $\mathcal{S}=(\mathcal{C},T,R,\mathfrak{l},\mathfrak{t}, \mathfrak{r})$
and a
\lb monad morphism $\varphi:R \to T$.
Consider the functors
$\overline{T}_o$ and $\overline{T}_i$, constructed in
Propositions \ref{pr:T_o bar} and \ref{pr:T_i bar}, respectively, and the
functor $\Pi$ introduced in Definition \ref{def:Pi}. For these data
there are mutually inverse natural isomorphisms $\mathfrak{i}:\Pi\,
\overline{T}_o \rightarrow \Pi\, \overline{T}_i $ and $\mathfrak{j}:\Pi\,
\overline{T}_i \rightarrow \Pi\, \overline{T}_o$, between functors ${}_R
\mathcal{C}_R \to \mathcal{C}$, such that
\begin{equation}\label{eq:i,j}
\mathfrak{i}\circ p\overline{T}_o\circ
{\pi}_o=p\overline{T}_i \circ {\pi}_i ,\qquad\qquad
\mathfrak{j}\circ p\overline{T}_i \circ
{\pi}_i =p\overline{T}_o\circ {\pi}_o.
\end{equation}
\end{theorem}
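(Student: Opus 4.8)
The plan is to show that, for each $(R,R)$-bimodule $(X,x,x^{0})$, the two objects $\Pi\overline{T}_oX$ and $\Pi\overline{T}_iX$ solve one and the same universal problem in $\mathcal{C}$: being terminal among morphisms out of $TX$ that coequalize \emph{both} the parallel pair $(x_i,x_i^{0})$ of inner actions from (\ref{ec: T_i}) and the parallel pair $(x_o,x_o^{0})$ of outer actions from (\ref{ec: T_o}) (all four being morphisms $RTX\to TX$). Once this is established, the universal property forces unique $\mathfrak{i}X\colon\Pi\overline{T}_oX\to\Pi\overline{T}_iX$ and $\mathfrak{j}X\colon\Pi\overline{T}_iX\to\Pi\overline{T}_oX$ satisfying (\ref{eq:i,j}); since $p\overline{T}_o\circ\pi_o$ and $p\overline{T}_i\circ\pi_i$ are epimorphisms, $\mathfrak{j}X\circ\mathfrak{i}X$ and $\mathfrak{i}X\circ\mathfrak{j}X$ must be the respective identities, so $\mathfrak{i},\mathfrak{j}$ are mutually inverse.

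The one computation I would actually carry out is the verification that $p\overline{T}_oX\circ\pi_oX\colon TX\to\Pi\overline{T}_oX$ has this universal property. By Proposition~\ref{pr:T_o bar}, $(\overline{T}_oX,\pi_oX)=\mathrm{Coeq}(x_i,x_i^{0})$ in ${}_{R}\mathcal{C}_R$; since ${}_{R}\mathcal{C}_R\cong{}_{R^{e}}\mathcal{C}$ with $R^{e}$ preserving coequalizers, the forgetful functor to $\mathcal{C}$ preserves this coequalizer (Proposition~\ref{le:coequalizer}), so $\pi_oX$ is already a coequalizer of $(x_i,x_i^{0})$ in $\mathcal{C}$, hence epi, and $R\pi_oX$ is epi too because $R$ preserves coequalizers. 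The $(R,R)$-bimodule structure carried by $\overline{T}_oX$ is $(\overline{x}_o,\overline{x}_o^{0})$, characterized by (\ref{ec:x_o bar}), and by Definition~\ref{def:Pi} we have $(\Pi\overline{T}_oX,p\overline{T}_oX)=\mathrm{Coeq}(\overline{x}_o,\overline{x}_o^{0})$. Then (\ref{ec:x_o bar}) gives $p\overline{T}_oX\circ\pi_oX\circ x_o=p\overline{T}_oX\circ\overline{x}_o\circ R\pi_oX=p\overline{T}_oX\circ\overline{x}_o^{0}\circ R\pi_oX=p\overline{T}_oX\circ\pi_oX\circ x_o^{0}$, so $p\overline{T}_oX\circ\pi_oX$ coequalizes $(x_o,x_o^{0})$ besides $(x_i,x_i^{0})$; conversely, any $h\colon TX\to Z$ coequalizing both pairs factors through $\pi_oX$, and the induced map, using (\ref{ec:x_o bar}) and the epimorphness of $R\pi_oX$, coequalizes $(\overline{x}_o,\overline{x}_o^{0})$, hence factors through $p\overline{T}_oX$; uniqueness is automatic as $p\overline{T}_oX\circ\pi_oX$ is epi.

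Then I would run the symmetric argument — replacing Proposition~\ref{pr:T_o bar} by Proposition~\ref{pr:T_i bar} and (\ref{ec:x_o bar}) by (\ref{ec:x tilde}) — to conclude that $p\overline{T}_iX\circ\pi_iX\colon TX\to\Pi\overline{T}_iX$ solves the \emph{same} universal problem, the two parallel pairs merely entering in the opposite order. This immediately yields $\mathfrak{i}X$, $\mathfrak{j}X$ and (\ref{eq:i,j}) as above. For naturality in $X$, given a morphism $f\colon X\to Y$ in ${}_{R}\mathcal{C}_R$ I would precompose both sides of the naturality square with the epimorphism $p\overline{T}_oX\circ\pi_oX$ and reduce each to $p\overline{T}_iY\circ\pi_iY\circ Tf$, using naturality of $p$, $\pi_o$ and $\pi_i$, the fact that $T_of$ and $T_if$ both have underlying morphism $Tf$, and the defining relations of $\mathfrak{i}X$, $\mathfrak{i}Y$ (and similarly for $\mathfrak{j}$).

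I do not anticipate a serious obstacle: the hypotheses are used only through the epimorphness of $R\pi_oX$ and $R\pi_iX$, and everything else is formal manipulation of coequalizers. The point that needs care — and the real content of the proof — is the claim in the second paragraph that carrying out the quotient in two stages (first by the inner actions, then by the \emph{induced} bimodule structure) computes the joint coequalizer of the inner and outer action pairs; once that is in hand, the symmetry between the inside and outside structures does the rest.
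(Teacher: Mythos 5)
Your proposal is correct and in substance coincides with the paper's proof: the paper constructs $\mathfrak{i}X$ by factoring $p\overline{T}_iX\circ\pi_iX$ first through $\pi_oX$ (using that it coequalizes $(x_i,x_i^0)$, via \eqref{ec:x tilde}) and then through $p\overline{T}_oX$ (using \eqref{ec:x_o bar} and that $R\pi_oX$ is epi), which is exactly your verification that $p\overline{T}_oX\circ\pi_oX$ and $p\overline{T}_iX\circ\pi_iX$ present the same joint coequalizer of the inner and outer action pairs, with $\mathfrak{j}$, mutual inversion and naturality handled identically via the epimorphisms. The only quibble is terminological: the universal object among morphisms out of $TX$ coequalizing both pairs is \emph{initial}, not terminal, in the relevant category of cocones.
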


\begin{proof}
In the first diagram of Figure \ref{fig:morfism i} the squares are
commutative, for any $(R,R)$-bimodule $(X,x,x^0)$,
as ${\pi}_i X:T_{i}X\rightarrow \overline{T}_i X$ is a morphism of $(R,R)$
-bimodules. Hence $p\overline{T}_i X\circ {\pi}_i X$ coequalizes $
(x_{i},x_{i}^{0}).$ Then there is a morphism $\mathfrak{i}^{\prime }X:
\overline{T}_oX\rightarrow \Pi\, \overline{T}_i X$ such that
$\mathfrak{i}^{\prime }X\circ {\pi}_oX=p\overline{T}_i X\circ
{\pi}_i X.$
\begin{figure}[h]
\begin{center}
\fbox{
\begin{xy} (0,0)*+{}="v1";(20,0)*+{\Pi\, \overline{T}_i X}="v2";(40,0)*+{}="v3"; (0,15)*+{R\overline{T}_i
X}="v4";(20,15)*+{\overline{T}_i  X}="v5";(40,15)*+{}="v6";
(0,30)*+{RT_i X}="v7";(20,30)*+{T_iX}="v8";(40,30)*+{\overline{T}_o
X}="v9"; {\ar@<.5mm>^-{x_i} "v7";"v8" }; {\ar@<-1mm>_-{x_i^{0}}
"v7";"v8"}; {\ar@<1mm>^-{\overline{x}_i} "v4";"v5" };
{\ar@<-.5mm>_-{\overline{x}_i^{0}} "v4";"v5"}; {\ar@{->}_{R{\pi}_i
X} "v7"; "v4"}; {\ar@{->}^{{\pi}_i  X} "v8"; "v5"}; {\ar@{->}_{p
\overline{T}_i  X} "v5"; "v2"}; {\ar@{.>}^{\mathfrak{i}'X} "v9";
"v2"}; {\ar@{->}^{\pi_o X} "v8"; "v9"};
\end{xy}\qquad\qquad
\begin{xy} (0,0)*+{}="v1";(20,0)*+{\Pi\,
\overline{T}_i X}="v2";(40,0)*+{}="v3";
(0,15)*+{R\overline{T}_o X}="v4";(20,15)*+{\overline{T}_o
X}="v5";(40,15)*+{\Pi\,\overline{T}_oX}="v6";
(0,30)*+{RT_oX}="v7";(20,30)*+{T_oX}="v8";(40,30)*+{}="v9";
{\ar@<.5mm>^-{x_o} "v7";"v8" }; {\ar@<-1mm>_-{x_o^{0}} "v7";"v8"};
{\ar@<1mm>^-{\overline{x}_o} "v4";"v5" };
{\ar@<-.5mm>_-{\overline{x}_o^{0}} "v4";"v5"}; {\ar@{->}_{R \pi_o X}
"v7"; "v4"}; {\ar@{->}^{\pi_o X} "v8"; "v5"};
{\ar@{->}_{\mathfrak{i}'X} "v5"; "v2"}; {\ar@{.>}^{\mathfrak{i}X}
"v6"; "v2"}; {\ar@{->}^{p\overline{T}_o X} "v5"; "v6"};
\end{xy}}
\end{center}
\caption{The construction of $\mathfrak{i}$.}
\label{fig:morfism i}
\end{figure}
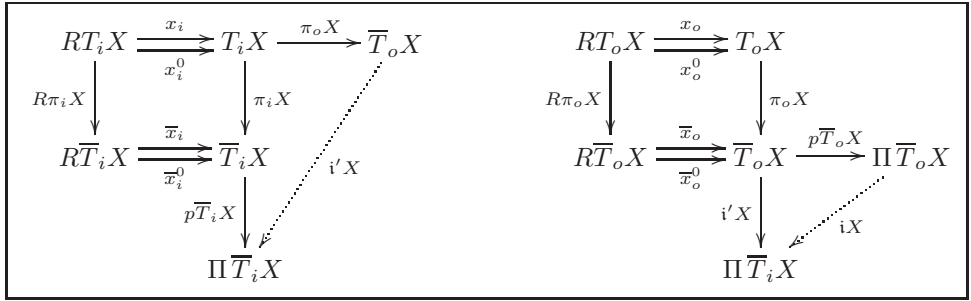
In the second diagram of Figure \ref{fig:morfism i}, the squares are also
commutative, as ${\pi}_oX$ is a morphism of $(R,R)$-bimodules. Therefore $
\mathfrak{i}^{\prime }X $ coequalizes $(\overline{x}_o,\overline{x}_o^{0}),$
as $ {\pi}_i X$ coequalizes $(x_{o},x_{o}^{0})$ and $R{\pi}_oX$ is an
epimorphism. Thus there exists a morphism $\mathfrak{i}X:\Pi\,
\overline{T}_o X\rightarrow \Pi\, \overline{T}_i X$ satisfying the
required relation. To construct $\mathfrak{j}$ one proceeds
analogously.
The natural morphisms $\mathfrak{i}$ and $\mathfrak{j}$ are
mutual inverses, as $p\overline{T}_i X\circ {\pi}_i X$ and
$p\overline{T}_oX\circ {\pi}_oX$ are epimorphisms.
Naturality of $\mathfrak{i}$ follows by (\ref{eq:i,j}) and naturality of $p$,
$\pi_o$ and $\pi_i$.
\end{proof}

\begin{example} \label{ex:bimod_i}
Let $\mathcal{C}$ be an abelian braided monoidal category such
that the functors $X\otimes (-)$ and $(-)\otimes X$ are right
exact, for any object $X$ in ${\mathcal C}$ (cf. Example
\ref{ex:phi*}). For the monad morphism coming from an algebra
homomorphism $\boldsymbol{\varphi}: {\bf R}\to {\bf T}$ as in
Example \ref{ex:alg_sep}, the natural transformation
$\overline{\mathfrak{t}}$ in Theorem \ref{te:t' and t bar} (2) is
the projection of $\alpha_{{\bf T}, {\bf T}, X}
\circ (\chi_{{\bf T},{\bf T}}\otimes X) \circ \alpha^{-1}_{{\bf T}, {\bf T},
  X}$. That is, it is the unique morphism satisfying
$$
\pi_o ({\bf T}\BP X) \circ ({\bf T}\otimes \pi_i X) \circ \alpha_{{\bf T}, {\bf
    T}, X} \circ (\chi_{{\bf T},{\bf T}}\otimes X) \circ \alpha^{-1}_{{\bf T},
  {\bf T}, X} =\overline{\mathfrak{t}}X \circ \pi_i ({\bf T}\otimes_{\bf R} X)
\circ (T\otimes \pi_o X),
$$
where notations in Example \ref{ex:bar_T} are used.

For an ${\bf R}$-bimodule $X$, the natural transformation
$\mathfrak{i}X: {\bf T}\widehat{\otimes}_{\bf R} X\to \Pi({\bf T} \BP X)$,
constructed in the previous theorem, is the projection of the identity map
${\bf T}\otimes X \to{\bf T}\otimes X$ .
\end{example}

As it is explained in \cite[page 281]{We}, for any monad $({\bf T},
  {\boldsymbol \mu},{\boldsymbol \eta})$ on a category ${\mathcal M}$ and an
  object $X$ in ${\mathcal M}$, there is an associated cosimplex in ${\mathcal
  M}$, given at degree $n$ by ${\bf T}^{n+1} X$. Coface and
  codegeneracy maps are given, for $k=0,\dots,n$, by
$$
{\bf T}^k {\boldsymbol\eta} {\bf T}^{n-k} X:{\bf T}^{n} X \to {\bf T}^{n+1} X
\qquad \textrm{and}\qquad
{\bf T}^k {\boldsymbol\mu} {\bf T}^{n-k} X:{\bf T}^{n+2} X \to {\bf T}^{n+1}
X,
$$
respectively. Clearly, application of any functor ${\boldsymbol \Pi}:{\mathcal
  M}\to {\mathcal C}$ yields a cosimplex in ${\mathcal C}$. In
\cite{BohmStefan}, para-cocyclic structures on the resulting cosimplex in
${\mathcal C}$ were studied. Recall from \cite{BohmStefan} the following
construction.

\begin{definition}\cite[Definitions 1.7 and 1.8]{BohmStefan}
\label{def:adm_sep}
An \emph{admissible septuple} ${\mathcal A}$ consists of the data $({\mathcal
  M}, {\mathcal C}, {\bf T}_o,$ ${\bf T}_i, {\boldsymbol
  \Pi},{\boldsymbol t}, {\boldsymbol i})$, where
\begin{itemize}
\item $\mathcal{C}$ and $\mathcal{M}$ are categories,
\item $({\bf T}_o, {\boldsymbol \mu}_o,{\boldsymbol \eta}_o)$ and
  $({\bf T}_i,{\boldsymbol \mu}_i,{\boldsymbol \eta}_i)$ are monads on
  $\mathcal{M}$,
\item ${\boldsymbol \Pi}$ is a functor $\mathcal{M}\to \mathcal{C}$,
\item ${\boldsymbol{t}}:{\bf T}_i {\bf T}_o \to {\bf T}_o {\bf T}_i$ is a
  distributive law,
\item  ${\boldsymbol i}:{\boldsymbol\Pi} {\bf T}_o\to {\boldsymbol\Pi}
  {\bf T}_i$ is a natural transformation,
\end{itemize}
subject to the conditions
\begin{equation}
{\boldsymbol i}\, \circ {\boldsymbol \Pi}\, {\boldsymbol\eta}_o\, =
{\boldsymbol \Pi}\, {\boldsymbol \eta_i}
\qquad \textrm{and}\qquad
{\boldsymbol  i}\, \circ {\boldsymbol\Pi}\, {\boldsymbol \mu}_o\, =
{\boldsymbol \Pi}\, {\boldsymbol \mu}_i \circ \, { \boldsymbol
  i}\,{\bf T}_i\, \ \circ {\boldsymbol\Pi}\,  {\boldsymbol t} \circ
{\boldsymbol i}\, {\bf T}_o . \label{eq:i}
\end{equation}
A \emph{transposition morphism} for the admissible septuple $\mathcal {A}$ is
a pair $(X,w)$, consisting of an object $X$ and a morphism $w:{\bf T}_i
X \to {\bf T}_o X$ in ${\mathcal M}$, satisfying
\begin{equation}
w\circ {\boldsymbol \eta}_i X ={\boldsymbol \eta}_o X
\qquad \textrm{and}\qquad
w\circ { \boldsymbol \mu}_i X = {\boldsymbol \mu}_o X \circ {\bf T}_o
w \circ {\boldsymbol t} X \circ {\bf T}_i w
\label{eq:w}.
\end{equation}
\end{definition}

\begin{theorem}\cite[Theorem 1.10]{BohmStefan}\label{thm:coc_distr_law}
Consider an admissible septuple $({\mathcal
  M}, {\mathcal C}, {\bf T}_o,{\bf T}_i, {\boldsymbol
  \Pi},{\boldsymbol t}, {\boldsymbol i})$ and a transposition morphism $(X,w)$
  for it.
The associated cosimplex $Z^\ast:={\boldsymbol \Pi} {\bf T}_o^{\ast+1}
X$ is para-cocyclic with para-cocyclic morphism
\begin{equation}\label{eq:para_coc_op_distr_law}
{w}_n:={\boldsymbol\Pi} {\bf{T}}_o^n w \circ {\boldsymbol\Pi}
{\bf{T}}_o^{n-1} \, {\boldsymbol t} \, X \circ {\boldsymbol\Pi}
{\bf{T}}_o^{n-2} \, {\boldsymbol t}\,
  {\bf{T}}_o X \circ \dots \circ {\boldsymbol\Pi} {\bf{T}}_o
  \, {\boldsymbol t}\,   {\bf{T}}_o^{n-2} X \circ
{\boldsymbol\Pi} \, {\boldsymbol t}\,   {\bf{T}}_o^{n-1}  X \circ
{\boldsymbol i}\,  {\bf{T}}_o^n X.
\end{equation}
\end{theorem}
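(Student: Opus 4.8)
We indicate a proof of Theorem \ref{thm:coc_distr_law} (carried out in full in \cite[Theorem 1.10]{BohmStefan}). Recall that $Z^\ast = {\boldsymbol\Pi}\,{\bf T}_o^{\ast+1}X$ is a cosimplex with cofaces $\delta^k_n={\boldsymbol\Pi}\,{\bf T}_o^k{\boldsymbol\eta}_o{\bf T}_o^{n-k}X$ and codegeneracies $\sigma^k_n={\boldsymbol\Pi}\,{\bf T}_o^k{\boldsymbol\mu}_o{\bf T}_o^{n-k}X$, for $0\le k\le n$. The plan is to verify directly that the morphisms $w_n$ of \eqref{eq:para_coc_op_distr_law} obey the defining relations of a para-cocyclic operator:
\[
w_n\circ\delta^k_n=\delta^{k-1}_n\circ w_{n-1}\quad(1\le k\le n),\qquad w_n\circ\delta^0_n=\delta^n_n,
\]
\[
w_n\circ\sigma^k_n=\sigma^{k-1}_n\circ w_{n+1}\quad(1\le k\le n),\qquad w_n\circ\sigma^0_n=\sigma^n_n\circ w_{n+1}^{\,2}.
\]
No identity ``$w_n^{\,n+1}=\mathrm{Id}$'' is to be checked, since only a \emph{para}-cocyclic structure is asserted. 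The available tools are: naturality of ${\boldsymbol\Pi}$, ${\boldsymbol i}$ and ${\boldsymbol t}$; the monad axioms for ${\bf T}_o$ and ${\bf T}_i$; the unit and multiplication compatibilities of ${\boldsymbol t}$, i.e. ${\boldsymbol t}\circ{\boldsymbol\eta}_i{\bf T}_o={\bf T}_o{\boldsymbol\eta}_i$, ${\boldsymbol t}\circ{\bf T}_i{\boldsymbol\eta}_o={\boldsymbol\eta}_o{\bf T}_i$ and the two hexagon identities involving ${\boldsymbol\mu}_o,{\boldsymbol\mu}_i$; and the four structure equations \eqref{eq:i} and \eqref{eq:w}.

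The first step is to observe, from \eqref{eq:para_coc_op_distr_law}, that $w_n$ is a ``rotation by one position''. Read right to left: ${\boldsymbol i}\,{\bf T}_o^nX$ turns, at the level of ${\boldsymbol\Pi}$, the outermost ${\bf T}_o$ into a ${\bf T}_i$; the block ${\boldsymbol\Pi}\,{\bf T}_o^{n-1}{\boldsymbol t}X\circ\dots\circ{\boldsymbol\Pi}\,{\boldsymbol t}\,{\bf T}_o^{n-1}X$ is the canonical transposition ${\bf T}_i{\bf T}_o^n\to{\bf T}_o^n{\bf T}_i$ built from ${\boldsymbol t}$, which carries that ${\bf T}_i$ inward past the $n$ remaining copies of ${\bf T}_o$; and ${\boldsymbol\Pi}\,{\bf T}_o^n w$ turns the resulting innermost ${\bf T}_iX$ into ${\bf T}_oX$. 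In practice I would record all composites in string-diagram form --- a single ${\bf T}_i$-strand descending through the ${\bf T}_o$-strands, with a ${\boldsymbol\eta}_o$ inserted or a ${\boldsymbol\mu}_o$ contracted at a prescribed height --- which makes the following verifications routine to carry out.

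For the coface relations with $1\le k\le n$: the unit inserted by $\delta^k_n$ sits at an \emph{interior} height, so it crosses ${\boldsymbol i}\,{\bf T}_o^nX$ by naturality of ${\boldsymbol i}$, crosses each ${\boldsymbol t}$-factor by naturality of ${\boldsymbol t}$ together with ${\boldsymbol t}\circ{\bf T}_i{\boldsymbol\eta}_o={\boldsymbol\eta}_o{\bf T}_i$ (when the ${\bf T}_i$-strand reaches that height), and does not meet ${\boldsymbol\Pi}\,{\bf T}_o^n w$; hence $w_n\circ\delta^k_n=\delta^{k-1}_n\circ w_{n-1}$. For $k=0$ the inserted unit is in the \emph{outermost} slot, precisely where ${\boldsymbol i}$ acts: the first identity in \eqref{eq:i} turns it into a ${\boldsymbol\eta}_i$, repeated use of ${\boldsymbol t}\circ{\boldsymbol\eta}_i{\bf T}_o={\bf T}_o{\boldsymbol\eta}_i$ carries that ${\boldsymbol\eta}_i$ to the innermost slot, and the first identity in \eqref{eq:w}, $w\circ{\boldsymbol\eta}_iX={\boldsymbol\eta}_oX$, converts it to ${\boldsymbol\eta}_o$, yielding ${\boldsymbol\Pi}\,{\bf T}_o^n{\boldsymbol\eta}_oX=\delta^n_n$. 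The codegeneracy relations with $1\le k\le n$ are handled the same way, with ${\boldsymbol\mu}_o$ in place of ${\boldsymbol\eta}_o$: the contracted ${\boldsymbol\mu}_o$ is interior, commutes past ${\boldsymbol i}$ by naturality, and the hexagon identity ${\boldsymbol t}\circ{\bf T}_i{\boldsymbol\mu}_o={\boldsymbol\mu}_o{\bf T}_i\circ{\bf T}_o{\boldsymbol t}\circ{\boldsymbol t}{\bf T}_o$ shows that the transposition block of $w_{n+1}$ descends to that of $w_n$ after the contraction, so $w_n\circ\sigma^k_n=\sigma^{k-1}_n\circ w_{n+1}$.

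The relation $w_n\circ\sigma^0_n=\sigma^n_n\circ w_{n+1}^{\,2}$ is the crux, and I expect it to be the main obstacle. Here $w_{n+1}^{\,2}$ performs two successive rotations, carrying the two outermost ${\bf T}_o$'s of $Z^{n+1}$ --- one after the other, with the two ${\bf T}_i$-strands interleaving over the common ${\bf T}_o^n$-prefix --- to the two innermost positions, where $\sigma^n_n$ multiplies them; this must be matched with first multiplying those same two ${\bf T}_o$'s at the top, via $\sigma^0_n$, and then rotating once via $w_n$. The reconciliation rests on the \emph{multiplicativity} axioms: the second identity in \eqref{eq:i} says exactly that, under ${\boldsymbol\Pi}$, converting the product of two adjacent ${\bf T}_o$'s into a ${\bf T}_i$ equals converting each separately, transposing one past the other with ${\boldsymbol t}$, and then multiplying in ${\bf T}_i$; the second identity in \eqref{eq:w} is the corresponding statement for $w$ at the bottom; and the \emph{YB}-equation together with the hexagon identities for ${\boldsymbol t}$ are what allow the two interleaved ${\bf T}_i$-strands of $w_{n+1}^{\,2}$ to be merged, over the ${\bf T}_o^n$-prefix, into the single ${\bf T}_i{\bf T}_i$-strand occurring in \eqref{eq:i}. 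This last computation is mechanical but genuinely lengthy; done by hand with whiskered natural transformations it is error-prone, whereas in string diagrams the placement of the two copies of ${\boldsymbol i}$ and of $w$ and the merging of the strands become visually manageable, and that is how I would complete the proof.
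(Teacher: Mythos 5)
The paper itself gives no proof of Theorem \ref{thm:coc_distr_law}: it is imported verbatim from \cite[Theorem 1.10]{BohmStefan}, so there is no in-text argument to compare against. Your plan --- a direct verification that the $w_n$ of \eqref{eq:para_coc_op_distr_law} satisfy the para-cocyclic compatibilities with the cofaces ${\boldsymbol\Pi}{\bf T}_o^k{\boldsymbol\eta}_o{\bf T}_o^{n-k}X$ and codegeneracies ${\boldsymbol\Pi}{\bf T}_o^k{\boldsymbol\mu}_o{\bf T}_o^{n-k}X$, using naturality, the distributive-law axioms for ${\boldsymbol t}$, and the identities \eqref{eq:i} and \eqref{eq:w} --- is the natural route and the one the cited reference takes. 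Your treatment of the coface cases ($k=0$ via the first identities in \eqref{eq:i} and \eqref{eq:w}, interior $k$ via naturality of ${\boldsymbol i}$ and ${\boldsymbol t}\circ{\bf T}_i{\boldsymbol\eta}_o={\boldsymbol\eta}_o{\bf T}_i$) and of the interior codegeneracies is correct as sketched.

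One correction on the step you single out as the crux: there is no \emph{YB}-equation available for ${\boldsymbol t}$ in an admissible septuple --- Definition \ref{def:adm_sep} only requires ${\boldsymbol t}:{\bf T}_i{\bf T}_o\to{\bf T}_o{\bf T}_i$ to be a distributive law between two distinct monads, and a \emph{YB}-condition is imposed in this paper only on \emph{BD}-laws of the form $TT\to TT$ --- and none is needed. In verifying $w_n\circ\sigma^0_n=\sigma^n_n\circ w_{n+1}^{2}$ the two ${\bf T}_i$-strands never cross each other: after expanding ${\boldsymbol i}\circ{\boldsymbol\Pi}{\boldsymbol\mu}_o$ by the second identity in \eqref{eq:i}, the merged ${\bf T}_i{\bf T}_i$ is carried inward using only the distributive-law compatibility ${\boldsymbol t}\circ{\boldsymbol\mu}_i{\bf T}_o={\bf T}_o{\boldsymbol\mu}_i\circ{\boldsymbol t}{\bf T}_i\circ{\bf T}_i{\boldsymbol t}$ at each level, and at the bottom the second identity in \eqref{eq:w}, $w\circ{\boldsymbol\mu}_iX={\boldsymbol\mu}_oX\circ{\bf T}_ow\circ{\boldsymbol t}X\circ{\bf T}_iw$, produces exactly the crossing of the second strand with the output of the first $w$ that occurs in $w_{n+1}^{2}$; the interchange law (crossings at different heights commute) then reorders the composite into $\sigma^n_n\circ w_{n+1}^{2}$. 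With that substitution your outline goes through; the remaining work is the diagrammatic bookkeeping you defer, which is legitimate given that the full computation is carried out in \cite{BohmStefan}.
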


The next theorem is our main result.

\begin{theorem} \label{thm:main}
Consider a
\bpm $\mathcal{S}=(\mathcal{C},T,R,\mathfrak{l}, \mathfrak{t}, \mathfrak{r})$
and a
\lb monad morphism $\varphi:R \to T$. Then the data, consisting of
\begin{itemize}
\item the categories $\mathcal{C}$ and $\mathcal{M}:={}_R \mathcal{C}_R$,
\item the monads $\overline{T}_o$ in Proposition \ref{pr:T_o bar} and
  $\overline{T}_i$ in Proposition \ref{pr:T_i bar},
\item the functor $\Pi: {}_R \mathcal{C}_R \to \mathcal{C}$ in Definition
  \ref{def:Pi},
\item the natural transformations $\overline{\mathfrak{t}}:\overline{T}_i\,
  \overline{T}_o \to \overline{T}_o \, \overline{T}_i$, constructed in Theorem
  \ref{te:t' and t bar} (2), and $\mathfrak{i}:\Pi \overline{T}_o \to \Pi
  \overline{T}_i$, constructed in Theorem \ref{thm:i},
\end{itemize}
constitute an admissible septuple in the sense of Definition
\ref{def:adm_sep}. Moreover, a
transposition morphism for it consists of an $(R,R)$-bimodule $X$ and an
$(R,R)$-bimodule map $w:\overline{T}_i X \to \overline{T}_o X$, satisfying
\begin{equation}
\widetilde{w}\circ \tau X =\overline{\tau }_o X, \qquad
\textrm{and}\qquad \widetilde{w}\circ tX =\overline{t}_oX\circ
\overline{T}_o\widetilde{w} \circ \mathfrak{t}^{\prime }X\circ
T\widetilde{w}\circ \mathfrak{t}X, \label{eq:w_ex}
\end{equation}
where $\widetilde{w}$ is defined in terms of the natural morphism $\pi_i$ in
Proposition \ref{pr:T_i bar} as $\widetilde{w}:=w\circ {\pi}_i X$ and the
natural transformation $\mathfrak{t}^{\prime }:T_i \overline{T}_o\rightarrow
\overline{T}_oT_i$ is defined in Theorem \ref{te:t' and t bar} (1).
\end{theorem}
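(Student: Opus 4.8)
The plan is to note that nearly all the structure is already at hand: $(\overline{T}_o,\overline{t}_o,\overline{\tau}_o)$ and $(\overline{T}_i,\overline{t}_i,\overline{\tau}_i)$ are monads on $\mathcal{M}={}_R\mathcal{C}_R$ by Propositions~\ref{pr:T_o bar} and~\ref{pr:T_i bar}, $\Pi:\mathcal{M}\to\mathcal{C}$ is a functor by Definition~\ref{def:Pi}, $\overline{\mathfrak{t}}$ is a distributive law by Theorem~\ref{te:t' and t bar}~(3), and $\mathfrak{i}:\Pi\overline{T}_o\to\Pi\overline{T}_i$ is a natural transformation by Theorem~\ref{thm:i}. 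Hence the only new points are the two compatibility conditions~\eqref{eq:i} and the description of transposition morphisms. In both cases the uniform device is that $\overline{T}_o$, $\overline{T}_i$, $\overline{\mathfrak{t}}$, $\mathfrak{i}$, $\Pi$ are all characterized through the \emph{epimorphisms} $\pi_o$, $\pi_i$, $p$, while $T$ preserves coequalizers; so each identity will be proved by pre-composing with a suitable epimorphism assembled from these and then reducing, step by step, to the defining relations~\eqref{ec:tau bar}, \eqref{ec:t bar}, \eqref{ec:t_i bar}, \eqref{eq:i,j}, \eqref{ec:t induced}, \eqref{ec:t'_1}, together with naturality.

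First I would settle~\eqref{eq:i}. For the unit part, pre-compose $\mathfrak{i}X\circ\Pi\overline{\tau}_oX$ and $\Pi\overline{\tau}_iX$ with the epimorphism $pX$; by naturality of $p$, the identities $\overline{\tau}_o=\pi_o\circ\tau$, $\overline{\tau}_i=\pi_i\circ\tau$ from~\eqref{ec:tau bar} and~\eqref{ec:t_i bar}, and the defining relation~\eqref{eq:i,j} of $\mathfrak{i}$, both composites become $p\overline{T}_iX\circ\pi_iX\circ\tau X$. For the multiplication part the essential choice is the resolving epimorphism $E:=p(\overline{T}_o\overline{T}_oX)\circ\pi_o\overline{T}_oX\circ T_o\pi_oX$, a composite of epimorphisms (the last factor is epi since $T$ preserves coequalizers). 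Pushing the left side $\mathfrak{i}X\circ\Pi\overline{t}_oX$ through $E$ by naturality of $p$, relation~\eqref{ec:t bar} and relation~\eqref{eq:i,j} gives $p\overline{T}_iX\circ\pi_iX\circ tX$; pushing the right side $\Pi\overline{t}_iX\circ\mathfrak{i}\overline{T}_iX\circ\Pi\overline{\mathfrak{t}}X\circ\mathfrak{i}\overline{T}_oX$ through $E$ by~\eqref{eq:i,j} applied at $\overline{T}_oX$ and at $\overline{T}_iX$, naturality of $p$, relation~\eqref{ec:t induced}, and finally~\eqref{ec:t_i bar} gives $p\overline{T}_iX\circ\pi_iX\circ tX\circ\mathfrak{t}^{-1}X\circ\mathfrak{t}X=p\overline{T}_iX\circ\pi_iX\circ tX$. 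The two results agree, and $E$ is epi, so~\eqref{eq:i} holds and the seven data form an admissible septuple.

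For the transposition morphisms, take an $(R,R)$-bimodule $X$ and an $(R,R)$-bimodule map $w:\overline{T}_iX\to\overline{T}_oX$, and put $\widetilde{w}:=w\circ\pi_iX$, again an $(R,R)$-bimodule morphism. Since $\overline{\tau}_iX=\pi_iX\circ\tau X$ by~\eqref{ec:t_i bar}, the first relation $w\circ\overline{\tau}_iX=\overline{\tau}_oX$ in~\eqref{eq:w} is literally the first relation $\widetilde{w}\circ\tau X=\overline{\tau}_oX$ in~\eqref{eq:w_ex}. For the second, pre-compose the second relation in~\eqref{eq:w} with the epimorphism $\pi_i\overline{T}_iX\circ T\pi_iX$: the left side collapses by~\eqref{ec:t_i bar} to $\widetilde{w}\circ tX\circ\mathfrak{t}^{-1}X$, while the right side collapses, using naturality of $\pi_i$, the relation $\overline{\mathfrak{t}}X\circ\pi_i\overline{T}_oX=\overline{T}_o\pi_iX\circ\mathfrak{t}'X$ (the commutative right-hand square of Figure~\ref{fig:bar t}, which also follows from~\eqref{ec:t'_1} and~\eqref{ec:t induced}) and functoriality of $\overline{T}_o$, to $\overline{t}_oX\circ\overline{T}_o\widetilde{w}\circ\mathfrak{t}'X\circ T\widetilde{w}$. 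Since the pre-composing map is epi and $\mathfrak{t}X$ is invertible, the resulting equation is equivalent to the second relation in~\eqref{eq:w_ex}; hence $(X,w)$ is a transposition morphism exactly when~\eqref{eq:w_ex} holds.

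I expect the main obstacle to be exactly the bookkeeping in the multiplication half of~\eqref{eq:i}: choosing $E$ correctly, and keeping straight at each step which ``underlying'' morphism is meant — in particular that $T\pi_oX$, $T_o\pi_oX$ and $T_i\pi_oX$ all have the same underlying morphism in $\mathcal{C}$, so that~\eqref{ec:t bar}, \eqref{ec:t induced} and~\eqref{ec:t_i bar} can be applied. Once the epimorphisms and these identifications are in place, every individual step is a single use of one cited relation or of naturality, with no actual computation.
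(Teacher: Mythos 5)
Your proposal is correct and follows essentially the same route as the paper: the structural ingredients are cited from the earlier results, and both conditions in \eqref{eq:i} as well as the equivalence of \eqref{eq:w} with \eqref{eq:w_ex} are verified by precomposing with the very epimorphisms ($p$, \ $p\overline{T}_o\overline{T}_o\circ\pi_o\overline{T}_o\circ T\pi_o$, \ $\pi_i\overline{T}_i X\circ T\pi_i X$) the paper uses, reducing via \eqref{ec:tau bar}, \eqref{ec:t bar}, \eqref{ec:t_i bar}, \eqref{eq:i,j}, \eqref{ec:t induced}, \eqref{ec:t'_1} and naturality. The only difference is presentational: the paper carries out the multiplication-compatibility and transposition computations graphically (Figure \ref{fig:as1}), while you write out the same chain of identities algebraically.
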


\begin{proof}
$\overline{T}_o$ and $\overline{T}_i$ are monads on ${}_R {\mathcal C}_R$ by
 Proposition \ref{pr:T_o bar} and Proposition \ref{pr:T_i
   bar}, respectively. $\Pi$ is a functor ${}_R {\mathcal C}_R\to {\mathcal C}$
 by Definition \ref{def:Pi}. $\overline{\mathfrak t}$ is a distributive law
  $\overline{T}_i \overline{T}_o \to \overline{T}_o \overline{T}_i$ by Theorem
  \ref{te:t' and t bar} (3). Thus in order to verify that they constitute an
  admissible septuple, we have to prove that the natural transformation
  $\mathfrak{i}: \Pi {\overline T}_o \to \Pi {\overline T}_i$ in Theorem
  \ref{thm:i} satisfies
  conditions \eqref{eq:i}. In terms of the natural epimorphism $p$ from the
  forgetful functor ${}_R {\mathcal C}_R \to {\mathcal C}$ to $\Pi$,
\begin{equation}\label{eq:i_un}
{\mathfrak i} \circ \Pi {\overline \tau}_o \circ p =
{\mathfrak i} \circ \Pi \pi_o \circ \Pi \tau \circ p =
{\mathfrak i} \circ \Pi \pi_o \circ p T \circ \tau =
p {\overline T}_i \circ \pi_i \circ \tau =
p {\overline T}_i \circ {\overline \tau}_i =
\Pi {\overline \tau}_i \circ p.
\end{equation}
In the first equality we used \eqref{ec:tau bar} and in the
penultimate equality we used the first identity in \eqref{ec:t_i
bar}. The third equality is a consequence of the first identity in
\eqref{eq:i,j}. The other equalities follow by naturality. Since
$p$ is epi, \eqref{eq:i_un} proves the first condition in
\eqref{eq:i}.
\begin{figure}[h]
\begin{center}
\includegraphics[scale=.78]{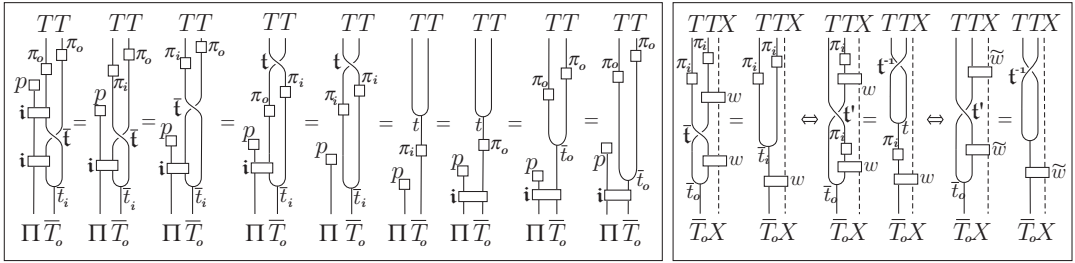}
\end{center}
\caption{The proof of Theorem \ref{thm:main} (3).} \label{fig:as1}
\end{figure}

The second condition in \eqref{eq:i} is proven in
Figure~\ref{fig:as1}. The first, fourth and seventh equalities
follow by the first identity in \eqref{eq:i,j}. The third equality
is a consequence of \eqref{ec:t
  induced}. In the fifth equality we used the second condition in
\eqref{ec:t_i bar} and in the seventh one we used \eqref{ec:t
bar}. The other equalities follow by naturality. Since $p
{\overline T}_o {\overline T}_o  \circ \pi_o {\overline T}_o\circ
T \pi_o$ is epi, we have the second condition in \eqref{eq:i}
proven.

The last claim about transposition morphisms is proven by showing
the equivalence of conditions \eqref{eq:w_ex} and \eqref{eq:w}. In
view of the first condition in \eqref{ec:t_i bar}, the first
conditions in \eqref{eq:w_ex} and \eqref{eq:w} are obviously
equivalent. In the second frame of Figure~\ref{fig:as1} it is
shown that the other conditions in \eqref{eq:w_ex} and
\eqref{eq:w} are equivalent. Composition on the right of both
sides of the second condition in \eqref{eq:w} with the epimorphism
$\pi_i {\overline T}_i X \circ T \pi_i X$ yields an equivalent
condition, namely the first equality in the above mentioned
picture. This is equivalent to the second equality, in view of
\eqref{ec:t_i bar} together with the fact that, by \eqref{ec:t'_1} and
\eqref{ec:t induced}, ${\overline T}_o \pi_i \circ {\mathfrak t}' = {\overline
  {\mathfrak t}}\circ \pi_i {\overline T}_o$. To conclude the proof
we use $\widetilde{w}=w\circ {\pi}_i X$.
\end{proof}

Combining Theorem \ref{thm:main} with Theorem \ref{thm:coc_distr_law}, we
obtain the following
\begin{corollary}\label{cor:P-Cocyc}
Let $\mathcal{S}=(\mathcal{C},T,R,\mathfrak{l}, \mathfrak{t}, \mathfrak{r})$
be a
\bpm
and $\varphi:R \to T$ be a
\lb monad morphism. Consider the monads $\overline{T}_o$ on ${}_R {\mathcal
C}_R$ in Proposition \ref{pr:T_o bar} and $\overline{T}_i$ in Proposition
\ref{pr:T_i bar} and the functor $\Pi: {}_R \mathcal{C}_R
\to \mathcal{C}$ in Definition \ref{def:Pi}. Then any $(R,R)$-bimodule morphism
$w:\overline{T}_i X \to \overline{T}_o X$, satisfying \eqref{eq:w_ex},
determines a para-cocyclic structure on the cosimplex $\Pi
\overline{T}_o^{\ast +1} X$. The para-cocyclic morphism is given in terms of
the natural transformation $\mathfrak{i}$, constructed in Theorem \ref{thm:i},
as
$$
\Pi  \overline{T}_o^{n} w \circ
\Pi\, \overline{T}_o^{n-1} \,
\overline{\mathfrak{t}}  \circ \Pi\, \overline{T}_o^{n-2}\,
\overline{\mathfrak{t}}\, \overline{T}_o^{{}}\circ \cdots \circ \Pi\,
\overline{T}_o\,  \overline{\mathfrak{t}}\, \overline{T}_o^{n-2}\circ
\Pi\, \overline{\mathfrak{t}}\, \overline{T}_o^{n-1} \circ
\mathfrak{i}\, \overline{T}_o^{n}: \Pi \overline{T}_o^{n +1} X \to \Pi
\overline{T}_o^{n +1} X.
$$
\end{corollary}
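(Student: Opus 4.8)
The plan is to obtain the corollary by combining the two preceding theorems; essentially all of the work has already been carried out in Theorem~\ref{thm:main}, so what remains is bookkeeping. First I would invoke Theorem~\ref{thm:main}: it asserts precisely that, for a \bpm $\mathcal{S}$ and a \lb monad morphism $\varphi:R\to T$, the septuple $({}_R{\mathcal C}_R,{\mathcal C},\overline{T}_o,\overline{T}_i,\Pi,\overline{\mathfrak{t}},\mathfrak{i})$ is an admissible septuple in the sense of Definition~\ref{def:adm_sep}, where the monad structures on $\overline{T}_o$ and $\overline{T}_i$ are those of Propositions~\ref{pr:T_o bar} and~\ref{pr:T_i bar}, the distributive law is $\overline{\mathfrak{t}}$ from Theorem~\ref{te:t' and t bar}~(2), and the natural transformation ${\boldsymbol i}$ is $\mathfrak{i}$ from Theorem~\ref{thm:i} (conditions~\eqref{eq:i} having been checked there). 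The same theorem also states that, for an $(R,R)$-bimodule $X$, an $(R,R)$-bimodule morphism $w:\overline{T}_iX\to\overline{T}_oX$ satisfying~\eqref{eq:w_ex} is the same thing as a transposition morphism $(X,w)$ for this admissible septuple, i.e.\ conditions~\eqref{eq:w_ex} and~\eqref{eq:w} are equivalent.

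Next I would feed this admissible septuple together with the transposition morphism $(X,w)$ into Theorem~\ref{thm:coc_distr_law}. Its conclusion is directly the one we want: the cosimplex attached to the monad $\overline{T}_o$ on ${}_R{\mathcal C}_R$ (as in the paragraph preceding Definition~\ref{def:adm_sep}) and then pushed forward along $\Pi$ — that is, $\Pi\,\overline{T}_o^{\ast+1}X$ — acquires a para-cocyclic structure, whose para-cocyclic morphism is given by formula~\eqref{eq:para_coc_op_distr_law}. To finish, I would rewrite that formula in the present notation via the substitutions ${\bf T}_o=\overline{T}_o$, ${\bf T}_i=\overline{T}_i$, ${\boldsymbol\Pi}=\Pi$, ${\boldsymbol t}=\overline{\mathfrak{t}}$, ${\boldsymbol i}=\mathfrak{i}$ (and ${\boldsymbol\mu}_o=\overline{t}_o$, ${\boldsymbol\eta}_o=\overline{\tau}_o$); this reproduces verbatim the expression displayed in the statement of the corollary, so no further verification is needed.

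The only point requiring a moment's care — not a genuine obstacle — is making sure the cosimplex produced by Theorem~\ref{thm:coc_distr_law} is the one built from $\overline{T}_o$ rather than from $\overline{T}_i$, and that the roles of the ``inside'' and ``outside'' data are matched consistently with the way $\mathcal{A}$ was assembled in Theorem~\ref{thm:main}. Once this identification is made the transcription of~\eqref{eq:para_coc_op_distr_law} is mechanical, and the proof is complete.
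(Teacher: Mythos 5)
Your proposal is correct and coincides with the paper's own argument: the corollary is stated there precisely as the combination of Theorem \ref{thm:main} (which produces the admissible septuple and identifies conditions \eqref{eq:w_ex} with \eqref{eq:w}) and Theorem \ref{thm:coc_distr_law}, whose formula \eqref{eq:para_coc_op_distr_law} transcribes, under the substitutions you list, into the displayed para-cocyclic operator.
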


An immediate example of the situation in Corollary
\ref{cor:P-Cocyc} is induced by an algebra homomorphism in a braided monoidal
category. 

\begin{example}\label{ex:braid}
In this example, in order to simplify formulae, we do not write the
associativity constraints.
Let $\boldsymbol{\varphi}: {\bf R} \to {\bf T}$ be a morphism of
algebras in a braided monoidal category, in which the functors $X \otimes (-)$
and $(-)\otimes X$ are right exact, for any object $X$ in ${\mathcal C}$.
Assume that $\boldsymbol{\varphi}$ satisfies \eqref{eq:br_ex} and consider the
corresponding \lb monad morphism in Example 
\ref{ex:alg_sep}. It determines an admissible septuple
$(\mathcal{C}, {\bf R}$-$\mathrm{Mod}$-${\bf R}, {\bf T}
\otimes_{\bf R} (-), {\bf T}\BP (-), \Pi, \overline{\mathfrak{t}},
{\mathfrak i})$, where the functors ${\bf T}\otimes_{\bf R} (-)$, ${\bf T}\BP
(-)$ and $\Pi$ are described
in Example \ref{ex:bar_T} and Example \ref{ex:bimod_Pi}, respectively, and the
natural transformations ${\overline{\mathfrak{t}}}$ and ${\mathfrak i}$
can be found in Example \ref{ex:bimod_i}. For any ${\bf
R}$-bimodule $X$ there is a corresponding cosimplex, given at
degree $n$ by
 $$
\mathrm{Z}^n(X,\boldsymbol{\varphi}):={\bf
T}^{{\widehat{\otimes}}_{\bf R}\ n+1} {\widehat{\otimes}}_{\bf R}
X ,
$$
known as a (braided) cyclic ${\bf R}$-module tensor product.
A transposition morphism is an ${\bf R}$-bimodule map $w:{\bf T}\BP X \to {\bf
  T}\otimes_{\bf  R} X$, satisfying the conditions
\begin{eqnarray*}
&& w\circ (\boldsymbol{\varphi}\BP X)=\boldsymbol{\varphi}\otimes_{\bf R}
X\\
&&w\circ ({\overline{\bf t}}_i \BP X)= ({\overline{\bf t}}_o \otimes_{\bf R} X)
\circ ({\bf T} \otimes_{\bf R} w)\circ
{\overline{\mathfrak t}} X \circ({\bf T}\BP w),
\end{eqnarray*}
where the multiplication maps
$\overline{\bf t}_o:{\bf T}\otimes_{\bf R} {\bf T}\to {\bf T}$ and
$\overline{\bf t}_i:{\bf T}\otimes_{{\bf R}^\chi} {\bf T}\to {\bf T}$ are
induced by ${\bf t}:{\bf T}
\otimes {\bf T}\to {\bf T}$ and ${\bf t}\circ \chi_{{\bf T},{\bf T}}^{-1}:{\bf T}
\otimes {\bf T}\to {\bf T}$ , respectively, (cf. Example \ref{ex:bar_T}).
In degree $n$, the para-cocylic operator is given by
$$
w_n= \big({\bf T}^{{\widehat{\otimes}}_{\bf R}n}{\widehat{\otimes}}_{\bf R} w
\big) \circ \Pi {\overline{\mathfrak t}}^nX\circ {\mathfrak i}\big({{\bf
    T}^{\otimes_{\bf R}n} \otimes_{\bf R} X}\big),
$$
where ${\overline{\mathfrak t}}^nX$ is the projection of $\chi_{{\bf T},{\bf
    T}^{\otimes_{\bf R}\, n}}\otimes X$, i.e. the unique map ${\bf T} \BP ({\bf
    T}^{\otimes_{\bf R} \, n} \otimes_{\bf R} X) \to {\bf T}^{\otimes_{\bf R}
    \, n} \otimes_{\bf R} ({\bf T} \BP  X)$, for which
$$
P_{oi}\circ (\chi_{{\bf T},{\bf T}^{\otimes_{\bf R}\, n}}\otimes X)=
{\overline{\mathfrak t}}^nX \circ P_{io},
$$
where $P_{oi}: {\bf T}^{\otimes_{\bf R}\, n}\otimes {\bf T}\otimes X \to
{\bf T}^{\otimes_{\bf R}  \, n} \otimes_{\bf R} ({\bf T} \BP  X)$ and
$P_{io}:
{\bf T}\otimes {\bf T}^{\otimes_{\bf R}\, n}\otimes X \to
{\bf T} \BP ({\bf T}^{\otimes_{\bf R} \, n} \otimes_{\bf R} X)$
denote the canonical epimorphisms.

Examples of cosimplices in Hopf cyclic theory, corresponding to
(co)module algebras of a bialgebroid (or in particular a
bialgebra) over ${\bf R}$, are of this kind \cite{BohmStefan}. Further
examples are presented in Example \ref{cor:M^H} and Example \ref{cor:M_H} below.
\end{example}

\begin{example}\label{cor:M^H}
Let $H$ be a Hopf algebra with comultiplication $\Delta
:H\rightarrow H\otimes H$ and counit $\varepsilon :H\rightarrow
\mathbb{K}.$ For simplifying computations, we shall use the Sweedler-Heynemann 
$\Sigma$-notation $\Delta (h)=\sum h_{(1)}\otimes h_{(2)}.$

An important class of braided monoidal categories is provided by 
the categories ${\mathcal M}^H$ of right comodules over a coquasitriangular Hopf
algebra $H$. Cf. \cite[Chapter 10]{Mo}, a Hopf algebra $H$ is
\emph{coquasitriangular }if there is an invertible
(with respect to the convolution product) $\mathbb{K}$-linear map $
\left\langle -,-\right\rangle :H\otimes H\rightarrow \mathbb{K}$
such that,
for $h,k,l\in H$,
\begin{align}
\sum \langle h_{(1)},k_{(1)}\rangle k_{(2)}h_{(2)}& =\sum
h_{(1)}k_{(1)}\langle h_{(2)},k_{(2)}\rangle ,  \label{cqt1}\\
\langle h,kl\rangle & =\sum \langle h_{(1)},k\rangle \langle
h_{(2)},l\rangle ,  \label{cqt2}\\
\langle hk,l\rangle & =\sum \langle h,l_{(2)}\rangle \langle
k,l_{(1)}\rangle \label{cqt3}.
\end{align}
If, in addition, the inverse in convolution of $\left\langle
-,-\right\rangle $ is equal to $\left\langle -,-\right\rangle \circ \tau $,
we shall say that $H$ is \emph{cotriangular}. Here $\tau :H\otimes
H\rightarrow H\otimes H$ denotes the usual flip map. By
definition, $\langle -,-\rangle$ is called the coquasitriangular
map of $H$.

We fix a  coquasitriangular  Hopf algebra $(H,\langle
-,-\rangle)$ over a field ${\mathbb K}$. Let us briefly recall the
braided monoidal structure of ${\mathcal M}^H$,
that corresponds to $\langle -,-\rangle$.  First, for $(M,\rho
_{M})$ and $(N,\rho _{N})$ in $\mathcal{M}^{H},$
the tensor product of $M$ and $N$ in $\mathcal{M}^{H}$ is $M\otimes _{
\mathbb{K}}N,$ regarded as a comodule with respect to the right
diagonal coaction
\begin{equation*}
\rho (m\otimes n)=\sum m_{\left\langle 0\right\rangle }\otimes
n_{\left\langle 0\right\rangle }\otimes m_{\left\langle
1\right\rangle }n_{\left\langle 1\right\rangle }.
\end{equation*}
In this formula, for a right comodule $(M,\rho_M)$,  we used the
$\Sigma$-notation $\rho _{M}(m)=\sum m_{\left\langle
0\right\rangle }\otimes m_{\left\langle
1\right\rangle }$. The associativity and unity constraints in $\mathcal{M}
^{H}$ are induced by the corresponding structures in the monoidal
category of $\mathbb{K}$-linear spaces. The unit object is
$\mathbb{K},$ regarded as a trivial $H$-comodule. To define the
braiding in $\mathcal{M}^H$ we use the coquasitriangular map as
follows. For two right $H$-comodules $M$ and $N$ we define the
natural map
\begin{equation*}
\chi _{M,N}:M\otimes _{\mathbb{K}}N\rightarrow N\otimes
_{\mathbb{K}}M,\ \ \ \chi _{M,N}(m\otimes n)=\sum \left\langle
n_{\left\langle 1\right\rangle },m_{\left\langle 1\right\rangle
}\right\rangle n_{\left\langle 0\right\rangle }\otimes
m_{\left\langle 0\right\rangle }.
\end{equation*}
It is well-known that $\mathcal{M}^{H}$ is a braided monoidal
category with braiding $\chi$. Furthermore, $\mathcal{M}^{H}$ is a
symmetric monoidal category with respect to $\chi$ (that is $\chi _{N,M}\circ
\chi_{M,N} = \mathrm{Id} _{M\otimes N}$, for $N,M\in \mathcal{M}^{H}$), if and
only if $H$ is \emph{cotriangular}.

Commutative Hopf algebras are the simplest  examples of
cotriangular Hopf algebras. In this case the cotriangular
structure $\left\langle
-,-\right\rangle $ may be taken the trivial $\mathbb{K}$-linear map $
\left\langle k,k\right\rangle :=\varepsilon (h)\varepsilon (k),$ for any $
h,k\in H.$ Thus, the braiding is induced by $\tau ,$ the canonical
flip map.

Group Hopf algebras are not necessarily coquasitriangular. Indeed, let
$G$ denote a group and assume that $\left\langle -,-\right\rangle
$ is a coquasitriangular map on $\mathbb{K}G$.  Let $\gamma
:G\times G\rightarrow \mathbb{K}$ be the map
\begin{equation*}
\gamma (h,k):=\left\langle h,k\right\rangle ,\qquad \text{for
}h,k\in G.
\end{equation*}
Since the coquasitriangular map is invertible in convolution, it
follows
easily that $\left\langle h,k\right\rangle \in \mathbb{K}^{\ast },$ for any $
h,k\in G$. Hence $\gamma $ can be regarded as a map to
$\mathbb{K}^{\ast }.$ Clearly, then the condition (\ref{cqt1}) is
equivalent to the fact that $G$ is abelian. On the other hand,
relations (\ref{cqt2}) and (\ref{cqt3}) are equivalent to
\begin{equation*}
\gamma (hk,g)=\gamma (h,g)\gamma (k,g)\text{\ \ \ and~\ \ }\gamma
(g,hk)=\gamma (g,h)\gamma (g,k),\text{ }\ \ \forall h,g,k\in G.
\end{equation*}
A map $\gamma :G \times G \rightarrow K^{\ast },$
$\gamma (h,k)=\left\langle h,k\right\rangle $ satisfying the above
identities is called \emph{bi-character }of $G.$ In conclusion,
$\mathbb{K}G$ is coquasitriangular if, and only if, $G$ is abelian
and there is a bi-character $\gamma :G\times G\rightarrow
\mathbb{K}$ on $G$. As a matter of fact, we have also proved that,
for an abelian group $G$, there is an one-to-one correspondence
between the set of coquasitriangular structures on $\mathbb{K}G$
and the set of bi-characters on $G.$

The category $\mathcal{M}^{\mathbb{K}G}$ is equivalent to the category of $G$
-graded vector spaces. Therefore, an object in
$\mathcal{M}^{\mathbb{K}G}$ is a vector space $V$ together with a
decomposition $V:=\bigoplus_{g\in G}V_{g}$. The tensor product of
$V$ and $W$ in $\mathcal{M}^{\mathbb{K}G}$ is $V\otimes
_{\mathbb{K}}W\ $on which we take the decomposition
\begin{equation*}
(V\otimes _{\mathbb{K}}W)_{g}=\bigoplus\nolimits_{hk=g}V_{h}\otimes _{
\mathbb{K}}W_{k}.
\end{equation*}
The braiding $\chi _{V,W}:V\otimes _{\mathbb{K}}W\rightarrow W\otimes _{
\mathbb{K}}V$, for $v\in V_{h}$ and $w\in W_{k}$, is given by
\begin{equation*}
\chi _{V,W}(v\otimes w)=\gamma (k,h)w\otimes v.
\end{equation*}
Note that the bi-character $\gamma :G\times G\rightarrow
\mathbb{K}^{\ast }$ defines a cotriangular structure on
$\mathbb{K}G$ if, and only if, $\gamma $ is \emph{symmetric, }that
is, for $h\in G$ and $k\in G$,
\begin{equation*}
\gamma (h,k)^{-1}=\gamma (k,h).
\end{equation*}
Recall that the category of super vector spaces can be seen as the
symmetric monoidal category of  $\mathbb{Z}_2$-graded vector
spaces, whose braiding is induced by the bi-character
$\gamma:\mathbb{Z}_2\times\mathbb{Z}_2\to\mathbb{K}^\ast$,
$\gamma(g,h)=(-1)^{gh}$.
\bigskip

Our aim now is to specialize the construction in Example
\ref{ex:braid} to the case when $\boldsymbol{\varphi}:{\bf
R}\to{\bf T}$ is a braid preserving homomorphism of algebras in
$\mathcal{M} ^{H},$ for a coquasitriangular Hopf algebra $H$. In this case,
the conditions in \eqref{eq:br_ex} take the form
\begin{eqnarray}
&\boldsymbol{\varphi}(r) \otimes_{\mathbb K} t = 
\sum \langle {r_{\langle 1 \rangle}}_{(1)}, {t_{\langle 1 \rangle}}_{(1)} \rangle
 \langle {t_{\langle 1 \rangle}}_{(2)}, {r_{\langle 1 \rangle}}_{(2)} \rangle
\boldsymbol{\varphi}(r_{\langle 0 \rangle}) \otimes_{\mathbb K} t_{\langle 0 \rangle} 
\qquad &\textrm{and}\label{eq:2.1_ex}\\
&\boldsymbol{\varphi}(r) \otimes_{\mathbb K} r' = 
\sum \langle {r_{\langle 1 \rangle}}_{(1)}, {r'_{\langle 1 \rangle}}_{(1)} \rangle
 \langle {r'_{\langle 1 \rangle}}_{(2)}, {r_{\langle 1 \rangle}}_{(2)} \rangle
\boldsymbol{\varphi}(r_{\langle 0 \rangle}) \otimes_{\mathbb K} r'_{\langle 0
  \rangle}, 
\qquad &\textrm{for } r,r'\in {\bf R},\ t\in {\bf T}.
\nonumber
\end{eqnarray}
Clearly, $\overline{T}_{o}$ is the functor $\mathbf{T}\otimes _{\mathbf{R}
}(-).$ The multiplication in $\mathbf{R}^{\chi }$ is defined, for
$r^{\prime
}$ and $r^{\prime \prime }\mathbf{\ }$in $\mathbf{R}$, by
\begin{equation*}
r^{\prime }\cdot r^{\prime \prime }:=\sum \left\langle
r_{\left\langle 1\right\rangle }^{\prime \prime },r_{\left\langle
1\right\rangle }^{\prime }\right\rangle r_{\left\langle
0\right\rangle }^{\prime \prime }r_{\left\langle 0\right\rangle
}^{\prime }.
\end{equation*}
We have already noticed that $\mathbf{R}^{\chi }$ is an algebra in
$\mathcal{M}^H$ and that $\mathbf{T}$ is a right
$\mathbf{R}^{\chi }$-module with respect to the action
\begin{equation*}
t\cdot r:=\sum \left\langle r_{\left\langle 1\right\rangle
},t_{\left\langle 1\right\rangle }\right\rangle
\boldsymbol{\varphi}(r_{\left\langle 0\right\rangle
})t_{\left\langle 0\right\rangle }.
\end{equation*}
Furthermore, every $\mathbf{R}$-bimodule $X$ can be seen as a left
$\mathbf{R}^{\chi }$-module, with respect to the action
\begin{equation*}
r\cdot x:=\sum \left\langle x_{\left\langle 1\right\rangle
},r_{\left\langle 1\right\rangle }\right\rangle x_{\left\langle
0\right\rangle }r_{\left\langle 0\right\rangle },
\end{equation*}
where $x\in X$ and $r\in \mathbf{R}$. Hence $\overline{T}
_{i}:=\mathbf{T}\otimes _{\mathbf{R}^{\chi }}(-).$ For $X$ as
above, $\Pi X$
is the quotient of $X$ with respect to the vector space generated by the
commutators
\begin{equation*}
\lbrack x,r]:=rx-\sum \left\langle x_{\left\langle 1\right\rangle
},r_{\left\langle 1\right\rangle }\right\rangle x_{\left\langle
0\right\rangle }r_{\left\langle 0\right\rangle },
\end{equation*}
where $r$ and $x$ run arbitrarily in $\mathbf{R}$ and $X$,
respectively. For $t^{\prime },$ $t^{\prime \prime }\in \mathbf{T}$ and $
x\in X,$ the morphisms $\overline{\mathfrak{t}}X:\mathbf{T}\otimes _{\mathbf{
R}^{\chi }}(\mathbf{T}\otimes _{\mathbf{R}}X)\rightarrow \mathbf{T}\otimes _{
\mathbf{R}}(\mathbf{T}\otimes _{\mathbf{R}^{\chi }}X)$ and $\mathfrak{i}X:
\mathbf{T}\widehat{\otimes }_{\mathbf{R}}X\rightarrow $ $\Pi (\mathbf{T}
\otimes _{\mathbf{R}^{\chi }}X)$ are given by the following formulae
\begin{eqnarray*}
&&\overline{\mathfrak{t}}X(t^{\prime }\otimes _{\mathbf{R}^{\chi
}}t^{\prime \prime }\otimes _{\mathbf{R}}x)=\sum \left\langle
t_{\left\langle 1\right\rangle }^{\prime \prime },t_{\left\langle
1\right\rangle }^{\prime
}\right\rangle t_{\left\langle 0\right\rangle }^{\prime \prime }\otimes _{
\mathbf{R}}t_{\left\langle 0\right\rangle }^{\prime }\otimes _{\mathbf{R}
^{\chi }}x, \\
&&\mathfrak{i}X(t\widehat{\otimes }_{\mathbf{R}}x)=p(t\otimes _{\mathbf{R}
^{\chi }}x),
\end{eqnarray*}
where $p:\mathrm{Id}_{\mathbf{R}\text{-}\mathrm{Mod}\text{-}\mathbf{R}
}\rightarrow \Pi $ is the canonical projection and $\widehat{\otimes }_{
\mathbf{R}}$ denotes the braided cyclic tensor product introduced in Example 
\ref{ex:bimod_Pi}.

Let $X$ be an $\mathbf{R}$-bimodule in $\mathcal{M}^H$. For a transposition
morphism $w:\mathbf{T} \otimes _{\mathbf{R}^\chi} X\rightarrow \mathbf{T}\otimes
_{\mathbf{R}}X$, we use the notation
\begin{equation*}
w(t\otimes _{\mathbf{R}^{\chi }}x)=\sum t_{w}\otimes
_{\mathbf{R}}x_{w}.
\end{equation*}
Using this notation, the defining conditions of a transposition morphism are
equivalent to 
\begin{eqnarray*}
&&w(1\otimes _{\mathbf{R}^{\chi }}x)=1\otimes _{\mathbf{R}}x, 
\qquad \textrm{and}\qquad \\
&&\sum \left\langle t_{\left\langle 1\right\rangle }^{\prime \prime
},t_{\left\langle 1\right\rangle }^{\prime }\right\rangle \left(
t_{\left\langle 0\right\rangle }^{\prime \prime }t_{\left\langle
0\right\rangle }^{\prime }\right) _{w}\otimes
_{\mathbf{R}}x_{w}=\sum \left\langle \left( t_{w}^{\prime \prime
}\right) _{\left\langle 1\right\rangle },t_{\left\langle
1\right\rangle }^{\prime }\right\rangle \left( t_{w}^{\prime
\prime }\right) _{\left\langle 0\right\rangle }\left(
t_{\left\langle 0\right\rangle }^{\prime }\right) _{w^{\prime }}\otimes _{
\mathbf{R}}\left( x_{w}\right) _{w^{\prime }}.
\end{eqnarray*}
In degree $n$, the corresponding para-cocylic object is given by $\mathrm{Z}
^{n}(X,\boldsymbol{\varphi},w):=\mathbf{T}^{{\widehat{\otimes }}_{\mathbf{R}
}\ n+1}{\widehat{\otimes }}_{\mathbf{R}}X$. Its para-cocyclic
operator is
\begin{align*}
w_{n}(t^{0}{\widehat{\otimes }}_{\mathbf{R}}\cdots {\widehat{\otimes }}_{
\mathbf{R}}t^{n}{\widehat{\otimes }}_{\mathbf{R}}x)& =\sum
\left\langle t_{\left\langle 1\right\rangle }^{1},t_{\left\langle
n\right\rangle }^{0}\right\rangle \cdots \left\langle
t_{\left\langle 1\right\rangle }^{n},t_{\left\langle
1\right\rangle }^{0}\right\rangle t_{\left\langle
0\right\rangle }^{1}{\widehat{\otimes }}_{\mathbf{R}}\cdots {\widehat{
\otimes }}_{\mathbf{R}}t_{\left\langle 0\right\rangle
}^{n}{\widehat{\otimes
}}_{\mathbf{R}}\left( t_{\left\langle 0\right\rangle }^{0}\right) _{w}{
\widehat{\otimes }}_{\mathbf{R}}x_{w} \\
& =\sum \left\langle t_{\left\langle 1\right\rangle }^{1}\cdots
t_{\left\langle 1\right\rangle }^{n},t_{\left\langle
1\right\rangle
}^{0}\right\rangle t_{\left\langle 0\right\rangle }^{1}{\widehat{\otimes }}_{
\mathbf{R}}\cdots {\widehat{\otimes }}_{\mathbf{R}}t_{\left\langle
0\right\rangle }^{n}{\widehat{\otimes }}_{\mathbf{R}}\left(
t_{\left\langle 0\right\rangle }^{0}\right) _{w}{\widehat{\otimes
}}_{\mathbf{R}}x_{w}.
\end{align*}
In the particular case when $H:=\mathbb{K}G$ and the braiding is
defined by a bi-character $\gamma ,$ conditions \eqref{eq:2.1_ex} reduce to
$$
\boldsymbol{\varphi}(r) \otimes_{\mathbb K} t = \gamma(k,h) \gamma(h,k)
\boldsymbol{\varphi}(r) \otimes_{\mathbb K}  t  \qquad \textrm{and}\qquad
\boldsymbol{\varphi}(r) \otimes_{\mathbb K} r'= \gamma(k,h) \gamma(h,k)
\boldsymbol{\varphi}(r) \otimes_{\mathbb K} r',
$$
for $h,k \in G$, $r\in {\bf R}_h$, $r' \in {\bf R}_k$ and $t\in {\bf
T}_k$. Note that, if some component ${\bf R}_h$ lies within the kernel of
$\boldsymbol{\varphi}$, then these conditions may hold also for a non-trivial
braiding between copies of ${\bf R}$ and ${\bf T}$.
The second condition in the definition of a transposition map becomes
\begin{equation*}
\sum \gamma \left( k,h\right) \left( t^{\prime \prime
}t^\prime\right) _{w}\otimes _{\mathbf{R}}x_{w}=\gamma \left( g,h\right) \left(
t_{w}^{\prime \prime }\right) _{u}t_{w^{\prime }}^{\prime }\otimes _{\mathbf{
R}}\left( x_{w}\right) _{w^{\prime }},
\end{equation*}
where $h,k,g\in G$ and $t^{\prime }\in \mathbf{T}_{h},$ $t^{\prime \prime }\in
\mathbf{T}_{k} $ and 
$x$ belongs to the component $X_g$ of the $G$-graded vector
space $X$.
In this particular case, for $t^{i}\in \mathbf{T}_{g_{i}},$ the
para-cocyclic operator satisfies
\begin{equation*}
w_{n}(t^{0}{\widehat{\otimes }}_{\mathbf{R}}\cdots {\widehat{\otimes }}_{
\mathbf{R}}t^{n}{\widehat{\otimes }}_{\mathbf{R}}x)=\sum \gamma
\left(
g_{1}\cdots g_{n},g_{0}\right) t^{1}{\widehat{\otimes }}_{\mathbf{R}}\cdots {
\widehat{\otimes }}_{\mathbf{R}}t^{n}{\widehat{\otimes }}_{\mathbf{R}
}t_{w}^{0}{\widehat{\otimes }}_{\mathbf{R}}x_{w}.
\end{equation*}
\end{example}

\begin{example}
\label{cor:M_H}Dually, the category ${}_H {\mathcal M}$ of left modules over a
${\mathbb K}$-Hopf algebra $H$ is braided monoidal if, and only if, $H$ is
quasitriangular. Cf. \cite[Chapter 10]{Mo}, a Hopf algebra $H$ is
\emph{quasitriangular }if there is an
invertible element $R:=\sum_{i}a_{i}\otimes b_{i}$ in $H\otimes H$ such that
\begin{align*}
\sum h_{(2)}\otimes h_{(1)}& =R(\sum h_{(1)}\otimes h_{(2)})R^{-1}
 \\
(\Delta \otimes \mathrm{Id}_{H})(R)& =R^{13}R^{23},  \\
(\mathrm{Id}_{H}\otimes \Delta )(R)& =R^{13}R^{12},
\end{align*}
where $R^{12}=\sum_i a_{i}\otimes b_{i}\otimes 1$ and $R^{13}$ and
$R^{23}$ are defined analogously. If $R^{-1}:=\sum_i b_{i}\otimes
a_{i}$, then we say that $H$ is \emph{triangular}.

The braided monoidal structure that corresponds to a
quasitriangular element $R$ is defined as follows. For $M$
and $N$ in $_{H}\mathcal{M},$ the tensor product of $M$ and $N$ in $_{H}
\mathcal{M}$ is $M\otimes _{\mathbb{K}}N,$ regarded as a module
with respect
to the left diagonal action
\begin{equation*}
h\triangleright (m\otimes n)=\sum h_{(1)}\triangleright m\otimes
h_{(2)}\triangleright n.
\end{equation*}
The associativity and unity constraints in $_{H}\mathcal{M}$ are
induced by the corresponding structures in the monoidal category
of $\mathbb{K}
$-linear spaces. The unit object is $\mathbb{K},$ regarded as a trivial $H$
-module. The functorial morphism of left $H$-modules
\begin{equation*}
\chi _{M,N}:M\otimes _{\mathbb{K}}N\rightarrow N\otimes
_{\mathbb{K}}M,\ \ \ \chi _{M,N}(m\otimes
n)=\sum_{j}\left(c_{j}\triangleright n\right)\otimes
\left(d_{j}\triangleright m\right)
\end{equation*}
is a braiding on $_{H}\mathcal{M,}$ where $R^{-1}=\sum_{j}c_{j}\otimes d_{j}$.

Let us specialize the para-cocyclic object constructed in Example \ref
{ex:braid} to a homomorphism of algebras $\boldsymbol{\varphi}:\mathbf{R}
\rightarrow \mathbf{T}$ in the braided monoidal category ${}_{H} \mathcal{M}$.
In this case, conditions \eqref{eq:br_ex} read as
$$
\boldsymbol{\varphi}(r) \otimes_{\mathbb K} t =
\sum_{j,k} (c_k d_j \triangleright \boldsymbol{\varphi}(r)) \otimes_{\mathbb
  K} (d_k c_j \triangleright t) \qquad \textrm{and}\qquad 
\boldsymbol{\varphi}(r) \otimes_{\mathbb K} r' =
\sum_{j,k} (c_k d_j \triangleright \boldsymbol{\varphi}(r)) \otimes_{\mathbb
  K} (d_k c_j \triangleright r'),
$$
for $r,r'\in {\bf R}$ and $t\in {\bf T}$, where $\sum_{j}c_{j}\otimes d_{j} =
R^{-1} = \sum_{k} c_{k}\otimes d_{k}$.
Clearly, $\overline{T}_{o}$ is the functor $\mathbf{T}\otimes _{\mathbf{R}
}(-).$ Moreover, the multiplication in $\mathbf{R}^{\chi }$ is defined, for $
r^{\prime }$ and $r^{\prime \prime }\mathbf{\ }$in $\mathbf{R}$, by
\begin{equation*}
r^{\prime }\cdot r^{\prime \prime }:=\sum_{j}(c_{j}\triangleright
r^{\prime \prime })(d_{j}\triangleright r^{\prime }).
\end{equation*}
We have already noticed that $\mathbf{R}^{\chi }$ is an algebra in ${}_H
\mathcal{M}$ and $\mathbf{T}$ is a right $\mathbf{R}^{\chi }$-module with
respect to the action
\begin{equation*}
t\cdot r:=\sum \left( c_{j}\triangleright
\boldsymbol{\varphi}(r)\right) \left( d_{j}\triangleright t\right)
.
\end{equation*}
Furthermore, if $X$ is an $\mathbf{R}$-bimodule then, for $x\in X$
and $r\in \mathbf{R,}$ the following formula
\begin{equation*}
r\cdot x:=\sum_{j}\left( c_{j}\triangleright x\right) \left(
d_{j}\triangleright r\right)
\end{equation*}
defines a left $\mathbf{R}^{\chi }$-module structure. Hence $\overline{T}
_{i}:=\mathbf{T}\otimes _{\mathbf{R}^{\chi }}(-).$ For $X$ as
above, $\Pi X$
is the quotient of $X$ with respect to the vector space generated by the
commutators 
\begin{equation*}
\lbrack x,r]:=rx-\sum_{j}\left( c_{j}\triangleright x\right)
\left( d_{j}\triangleright r\right) ,
\end{equation*}
where $r$ and $x$ run arbitrarily in $\mathbf{R}$ and $X$,
respectively. For $t^{\prime },$ $t^{\prime \prime }\in \mathbf{T}$ and $
x\in X,$ the morphisms $\overline{\mathfrak{t}}X:\mathbf{T}\otimes _{\mathbf{
R}^{\chi }}(\mathbf{T}\otimes _{\mathbf{R}}X)\rightarrow \mathbf{T}\otimes _{
\mathbf{R}}(\mathbf{T}\otimes _{\mathbf{R}^{\chi }}X)$ and $\mathfrak{i}X:
\mathbf{T}\widehat{\otimes }_{\mathbf{R}}X\rightarrow $ $\Pi (\mathbf{T}
\otimes _{\mathbf{R}^{\chi }}X)$ are given by the following formulae
\begin{eqnarray*}
&&\overline{\mathfrak{t}}X(t^{\prime }\otimes _{\mathbf{R}^{\chi
}}t^{\prime \prime }\otimes
_{\mathbf{R}}x)=\sum_{j}c_{j}\triangleright t^{\prime \prime
}\otimes _{\mathbf{R}}d_{j}\triangleright t^{\prime }\otimes _{\mathbf{R}
^{\chi }}x, \\
&&\mathfrak{i}X(t\widehat{\otimes }_{\mathbf{R}}x)=p(t\otimes _{\mathbf{R}
^{\chi }}x),
\end{eqnarray*}
where $p:\mathrm{Id}_{\mathbf{R}\text{-}\mathrm{Mod}\text{-}\mathbf{R}
}\rightarrow \Pi $ is the canonical projection and $\widehat{\otimes }_{
\mathbf{R}}$ denotes the braided cyclic tensor product introduced in Example 
\ref{ex:bimod_Pi}.

Let $X$ be an $\mathbf{R}$-bimodule in ${}_H \mathcal{M}$. For a transposition
morphism $w:\mathbf{T} \otimes _{\mathbf{R}^{\chi }}X\rightarrow
\mathbf{T}\otimes_{\mathbf{R}}X$, we use the notation
\begin{equation*}
w(t\otimes _{\mathbf{R}^{\chi }}x)=\sum t_{w}\otimes
_{\mathbf{R}}x_{w}.
\end{equation*}
Using this notation, the defining conditions of a transposition map are
equivalent to 
\begin{eqnarray*}
&&w(1\otimes _{\mathbf{R}^{\chi }}x)=1\otimes _{\mathbf{R}}x, 
\qquad \textrm{and}\qquad \\
&&\sum_{j}\left[ \left( c_{j}\triangleright t^{\prime \prime
}\right) \left(
d_{j}\triangleright t\right) \right] _{w}\otimes _{\mathbf{R}
}x_{w}=\sum_{j}\left( c_{j}\triangleright t_{w}^{\prime \prime
}\right)
\left( d_{j}\triangleright t^{\prime }\right) _{w^{\prime }}\otimes _{
\mathbf{R}}\left( x_{w}\right) _{w^{\prime }}.
\end{eqnarray*}
In degree $n$, the corresponding para-cocylic object is given by $\mathrm{Z}
^{n}(X,\boldsymbol{\varphi},w):=\mathbf{T}^{{\widehat{\otimes }}_{\mathbf{R}
}\ n+1}{\widehat{\otimes }}_{\mathbf{R}}X$ and its para-cocyclic operator is
\begin{align*}
w_{n}(t_{0}{\widehat{\otimes }}_{\mathbf{R}}\cdots {\widehat{\otimes }}_{
\mathbf{R}}t_{n}{\widehat{\otimes
}}_{\mathbf{R}}x)=\sum_{j_{1},\dots
j_{n}}\left( c_{j_{1}}^{(1)}\triangleright t_{1}\right) &{\widehat{\otimes }}
_{\mathbf{R}}\cdots {\widehat{\otimes }}_{\mathbf{R}}\left(
c_{j_{n}}^{(n)}\triangleright t_{n}\right) {\widehat{\otimes
}}_{\mathbf{R}}
\\
&{\widehat{\otimes }}_{\mathbf{R}}\left[
d_{j_{n}}^{(n)}\triangleright \left( \cdots \triangleright \left(
d_{j_{1}}^{(1)}\triangleright t_{0}\right) \cdots \right) \right]
_{w}{\widehat{\otimes }}_{\mathbf{R}}x_{w},
\end{align*}
where $\sum_{j_{k}}c_{j_{k}}^{(k)}\otimes d_{j_{k}}^{(k)}=R^{-1}$,
for every $k=1,\dots ,n.$
\end{example}

\end{document}